\documentclass{article}[12pt]
\usepackage{graphicx} 
\usepackage[utf8]{inputenc}
\usepackage{authblk}
\usepackage{setspace}
\graphicspath{ {./figures/} }
\usepackage{subcaption}
\usepackage{amssymb,amsthm,amsmath}

\usepackage{color,xcolor}

\usepackage{hyperref}
\usepackage{lineno}

\usepackage{fullpage}

\newtheorem{theorem}{\bf Theorem}[section]
\newtheorem{proposition}[theorem]{\bf Proposition}
\newtheorem{lemma}[theorem]{\bf Lemma}
\newtheorem{corollary}[theorem]{\bf Corollary}

\newtheorem{observation}[theorem]{\bf Observation}

\newtheorem{conjecture}{\bf Conjecture}

\theoremstyle{plain}
\newtheorem*{claim}{\bf Claim}

\title{Matchings in hypergraphs via Ore-degree conditions}

\author{J\'ozsef Balogh\thanks{Department of Mathematics, University of Illinois Urbana-Champaign, Urbana, IL, USA, and Extremal Combinatorics and Probability Group (ECOPRO), Institute for Basic Science (IBS), Daejeon, South Korea. Email: \texttt{jobal@illinois.edu}. Partially supported by NSF grants RTG DMS-1937241, FRG DMS-2152488, (UIUC  Campus Research Board Award RB26026), the Simons Fellowship, Simons Collaboration grant [SFI-MPS-TSM-00013107, JB], and the Institute for Basic Science (IBS-R029-C4).}
\qquad Cory Palmer\thanks{Department of Mathematical Sciences, University of Montana. E-mail: \texttt{cory.palmer@umontana.edu}. Supported by NSF grant DMS-2503179 and a grant from the Simons Foundation [SFI-MPS-TSM-00013277, CP].}
\qquad Ghaffar Raeisi\thanks{Department of Mathematical Sciences, Shahrekord University, Shahrekord, P.O.Box 115, Iran, and School of Mathematics, Institute for Research in Fundamental Sciences (IPM), P.O. Box: 19395--5746, Tehran, Iran, Email: \texttt{g.raeisi@ipm.ir}. This work is based upon research funded by Iran National Science Foundation (INSF) under
project No. 40400962 and also supported in part by a grant from IPM No. 1404050317.}
}

\begin{document}

\maketitle
\begin{abstract}
Let $\mathcal{H} \subseteq \binom{[n]}{r}$ be an $r$-uniform hypergraph on vertex set $[n] = \{1,2,\dots, n\}$.
  For an $r$-set of vertices $S \subseteq [n]$, 
 the {\it degree} of $S$ is defined as $\deg(S)=\sum_{v \in S}\deg(v)$ and the minimum of $\deg(S)$ over all non-edge $r$-subsets $S \not \in E(\mathcal{H})$ of $V({\cal H})$ is the {\it Ore-degree} of ${\cal H}$, denoted by ${\sigma_r}({\cal H})$. 
We prove several Ore-degree results about existence of matchings in hypergraphs:

    (1) For  $n\geq 2r+2$, if ${\cal  H}$ is an intersecting  $r$-uniform hypergraph on $n$ vertices, then $\sigma_r({\cal H})\leq r{n-2 \choose r-2}$, and  there is equality only  when  ${\cal  H}$ is a  $1$-star.

    (2) For  $r\geq 3$ 
and $n\geq 4r^2$, if 
is a non-trivial intersecting $r$-uniform hypergraph on $n$ vertices, then $\sigma_r({\cal H})\leq r\left({n-2 \choose r-2}-{n-r-2 \choose r-2}\right)$.

(3) For $s\geq 2$ and $n\geq 3r^2(s-1)$, if ${\cal  H}$ is an $r$-uniform hypergraph on $n$ vertices and $\sigma_r({\cal H})>r\left({n-1 \choose r-1}-{n-s \choose r-1}\right)$, then ${\cal  H}$ contains $s$ pairwise disjoint edges.

\end{abstract}

\section{ Introduction}

An {\it $r$-uniform hypergraph} (\emph{$r$-graph}) ${\cal H}$ is a 
family of $r$-subsets (the \emph{edges} of $\mathcal{H}$) of a \emph{vertex} set $V$.
For a given hypergraph ${\cal H}$, denote the vertex set by $V(\mathcal{H})$ and, for simplicity, we usually suppose that $V({\cal H})$ is $[n]:=\{1,2,\dots, n\}$. We identify $\mathcal{H}$ with its set of edges and so by $e \in \mathcal{H}$ we mean that $e$ is an edge of $\mathcal{H}$. Thus $|\mathcal{H}|$ is the number of edges in $\mathcal{H}$. We often write $\mathcal{H} \subseteq \binom{[n]}{r}$ to define an $r$-uniform hypergraph $\mathcal{H}$ on vertex set $[n]$.

The {\it degree} of a vertex $v\in V({\cal H})$ is defined as the number of distinct edges containing $v$ and the {\it minimum degree} of ${\cal H}$ (resp.\ {\it maximum degree} of ${\cal H}$), $\delta({\cal H})$ (resp. $\Delta({\cal H})$), is the minimum (resp. the maximum) of $\deg(v)$ over all vertices $v\in V({\cal H})$. For an $r$-set $S \subseteq V({\cal H})$, 
the {\it degree of} $S$ is defined as $\deg(S)=\sum_{v \in S}\deg(v)$ and the minimum of $\deg(S)$ over all non-edge $r$-subsets $S \not \in E(\mathcal{H})$ of $V({\cal H})$ is the \emph{Ore-degree} of $\mathcal{H}$ and is denoted ${\sigma_r}({\cal H})$. 
For an ordinary graph $H$, we recover the standard definition of Ore degree:  $\sigma_2(H)=\min_{uv\notin E(H)}(\deg(u)+\deg(v))$. 

A
{\it matching} in a hypergraph ${\cal H}$ is a set of pairwise disjoint edges and the {\it matching number} 
 is the size of the largest matching. 

\medskip
An $r$-uniform hypergraph ${\cal H}$ is  {\it intersecting}, if every pair of its edges has a non-empty intersection.
The celebrated theorem of Erd\H{o}s, Ko, and Rado~\cite{EKR} states that when $n\geq 2r$, every intersecting $r$-uniform hypergraph on $[n]$ has at most ${n-1 \choose r-1}$ edges and moreover, for $n >2r$, the extremal hypergraph is unique up to isomorphism and it consists of all the $r$-subsets of $[n]$ that contain a fixed vertex $x\in [n]$. 
We call such hypergraph a {\it 1-star}.
The Erd\H{o}s--Ko--Rado theorem is widely regarded as the cornerstone of extremal set theory and has many interesting applications and generalizations, see \cite{FG} for a survey.   Huang and Zhao~\cite{huang} gave the following minimum degree version of the Erd\H{o}s--Ko--Rado theorem.

\begin{theorem}[Huang and Zhao~\cite{huang}]\label{hoangzho}
Let $n\geq 2r+1$. If every vertex in an $n$-vertex $r$-uniform hypergraph ${\cal  H}$ has degree at least ${n-2 \choose r-2}$, then either ${\cal  H}$ is a 1-star or is non-intersecting (i.e., $\mathcal{H}$ contains two disjoint edges).
\end{theorem}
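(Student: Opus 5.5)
The plan is to prove the statement spectrally via the Kneser graph $K(n,r)$, whose independent sets are exactly the intersecting families. Let $A$ be the adjacency matrix of $K(n,r)$, indexed by $\binom{[n]}{r}$. It has eigenvalues $\lambda_j=(-1)^j\binom{n-r-j}{r-j}$ for $j=0,\dots,r$, with eigenspaces $V_0,\dots,V_r$, where $V_0\oplus V_1$ is spanned by the vectors $\mathbf 1_{\{e:\,v\in e\}}$, $v\in[n]$. Let $x=\mathbf 1_{\mathcal H}$ and write $x=\sum_j x_j$ with $x_j\in V_j$. Since ${\cal H}$ is intersecting, $x^{T}Ax=0$, i.e.
\[
0=\sum_{j=0}^r \lambda_j\|x_j\|^2 .
\]

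The first step is to express $\|x_0\|^2$ and $\|x_1\|^2$ through the degrees. We have $\|x_0\|^2=|\mathcal H|^2/\binom nr$. For $V_1$, use the inclusion matrix $W$ (rows $v\in[n]$, columns $r$-sets), for which $Wx=(\deg(v))_v$. Since $WW^{T}=\binom{n-2}{r-2}(J+\tfrac{n-r}{r-1}I)$, the projection onto $V_0\oplus V_1$ is an explicit combination of $W^TW$ and $J$. This gives
\[
\|x_0\|^2+\|x_1\|^2=\frac{\sum_v\deg(v)^2-c\,|\mathcal H|^2}{\binom{n-2}{r-1}},
\]
where $c=\binom{n-2}{r-2}\big/\binom{n-1}{r-1}$ (coefficients to be double-checked). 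Also $\sum_j\|x_j\|^2=|\mathcal H|$.

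The second step is the key inequality. For $n\ge 2r+1$, $\lambda_1=-\binom{n-r-1}{r-1}$ is the most negative eigenvalue, and $\lambda_j>\lambda_1$ for all $j\ge2$ (strictly, including the negative ones $\lambda_3,\lambda_5,\dots$). Hence
\[
0\ \ge\ \lambda_0\|x_0\|^2+\lambda_1\|x_1\|^2+\lambda_1\Big(|\mathcal H|-\|x_0\|^2-\|x_1\|^2\Big),
\]
with equality only if $x\in V_0\oplus V_1$. Substituting the degree expressions turns this into an inequality involving only $|\mathcal H|$ and $\sum_v\deg(v)^2$. I would combine it with the identity $\sum_v\deg(v)=r|\mathcal H|$ and with the hypothesis $\deg(v)\ge\binom{n-2}{r-2}$. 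The latter gives
\[
\sum_v\deg(v)^2\ \ge\ \binom{n-2}{r-2}\sum_v \deg(v)\ \ \text{plus a slack term},
\]
or rather the reversed inequality needed. The cleanest route seems to be $\sum_v(\deg(v)-\binom{n-2}{r-2})(\deg(v)-M)\le 0$ for a suitable $M$, which should push the inequality to force equality everywhere. The main obstacle is arranging this algebra so that the minimum-degree hypothesis, rather than an edge-count hypothesis, closes the argument. The constants must line up exactly at the threshold $\binom{n-2}{r-2}$; this is precisely the degree in a 1-star of every vertex other than the center, which suggests the bound is tight exactly there.

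The final step is the equality case. Equality forces $x\in V_0\oplus V_1$, so $\mathbf 1_{\mathcal H}(e)=a+\sum_{v\in e}b_v$ for some reals $a,b_v$. A $0/1$-valued function of this form that is also intersecting is very restricted. Comparing $e$ with $e\triangle\{u,w\}$ shows the $b_v$ take at most two values. Then intersecting plus $n\ge 2r+1$ leaves only the 1-star, since the complete $r$-graph on a subset and the other candidates contain disjoint edges or violate the degree bound. I would also check that equality in the degree inequality pins every non-center degree to exactly $\binom{n-2}{r-2}$, which is consistent with the 1-star.
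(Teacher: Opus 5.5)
The paper does not prove this statement; it is quoted from Huang and Zhao, so there is no in-paper argument to match. Judged on its own, your proposal has a genuine gap at its decisive step. Because you bound every $\lambda_j$ with $j\ge 2$ from below by $\lambda_1$, the $V_1$-terms cancel identically:
\[
\lambda_0\|x_0\|^2+\lambda_1\|x_1\|^2+\lambda_1\bigl(|\mathcal{H}|-\|x_0\|^2-\|x_1\|^2\bigr)=\lambda_0\|x_0\|^2+\lambda_1\bigl(|\mathcal{H}|-\|x_0\|^2\bigr).
\]
So after substitution no $\sum_v\deg(v)^2$ survives. The inequality is just Hoffman's bound $|\mathcal{H}|\le\binom{n-1}{r-1}$, which every intersecting family satisfies.

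The degree hypothesis only gives $|\mathcal{H}|\ge\frac{n}{r}\binom{n-2}{r-2}<\frac{n-1}{r-1}\binom{n-2}{r-2}=\binom{n-1}{r-1}$, so nothing is contradicted. Equality, your only route to $\mathbf{1}_{\mathcal{H}}\in V_0\oplus V_1$, would need $|\mathcal{H}|=\binom{n-1}{r-1}$; that is the uniqueness part of Erd\H{o}s--Ko--Rado, not the degree statement. Hence your equality-case analysis is never reached, and your device $\sum_v(\deg(v)-\binom{n-2}{r-2})(\deg(v)-M)\le 0$ has nothing to combine with. The ``main obstacle'' you flag is the whole content of the theorem, and it is left open. (Minor: the constant is $c=r\binom{n-2}{r-2}/\binom{n-1}{r-1}$.)

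To let the degrees enter, you must keep the spectral gap. On $V_2\oplus\cdots\oplus V_r$ the least eigenvalue is $\lambda_3=-\binom{n-r-3}{r-3}$ (for $r\ge 3$), and $x^TAx=0$ then gives $\lambda_0\|x_0\|^2+\lambda_1\|x_1\|^2\le\binom{n-r-3}{r-3}\bigl(|\mathcal{H}|-\|x_0\|^2-\|x_1\|^2\bigr)$. This is a genuine lower bound on $\sum_v\deg(v)^2$. In spectral terms, $x_0+x_1=\sum_v c_v\mathbf{1}_{\{e:\,v\in e\}}$ with $c_v=\bigl(\deg(v)-\binom{n-2}{r-2}|\mathcal{H}|/\binom{n-1}{r-1}\bigr)/\binom{n-2}{r-1}$, which is $\ge 0$ by EKR and the hypothesis. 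Moreover $\lambda_0\|x_0\|^2+\lambda_1\|x_1\|^2=\binom{n-2}{r-1}\binom{n-r-1}{r-1}\bigl((\sum_v c_v)^2-\sum_v c_v^2\bigr)$, and the inequality $\sum_v c_v^2\le(\max_v c_v)\sum_v c_v$ is essentially your device with $M=\Delta(\mathcal{H})$.

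Combine this with the count that a non-star with minimum degree $\ge\binom{n-2}{r-2}$ has at least $\binom{n-r-1}{r-1}$ edges avoiding a maximum-degree vertex. This does give a contradiction, but a direct execution only works for $n$ large compared with $r$, not throughout $n\ge 2r+1$. As the paper remarks, Huang and Zhao need an additional lemma from combinatorial geometry on top of the eigenvalue analysis.

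Finally, for $r\ge3$ and $n\ge 2r+2$ the statement is a corollary of the paper's Theorem~\ref{erdosko}, since minimum degree $\binom{n-2}{r-2}$ forces $\sigma_r(\mathcal{H})\ge r\binom{n-2}{r-2}$. That proof is entirely non-spectral: it uses Lemma~\ref{lem}, Lemma~\ref{reg-lemma}, Hilton--Milner, and Frankl's maximum-degree theorem.
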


 The proof is based on the analysis of eigenvalues of the Kneser graph and a lemma from combinatorial geometry. It is worth noting that recently, Frankl and Tokushige \cite{FT}, gave a combinatorial proof of this result for $n\geq 3r$. In fact, using their method, one can actually get a proof of Theorem~\ref{hoangzho} for $n\geq 2r+3$.

\medskip
Hilton and Milner~\cite{HM} proved a strong stability extension of the Erd\H{o}s--Ko--Rado theorem. An intersecting hypergraph ${\cal H}$ is  \emph{trivial} if it is a subhypergraph of a $1$-star.  The Hilton--Milner theorem states that for $ n>2r$, every non-trivial intersecting $r$-uniform hypergraph on $n$ vertices has at most $\binom{n-1}{r-1} - \binom{n-r-1}{r-1} + 1$ members. A corresponding extremal hypergraph ${\cal HM}_{n,r}$ consists of an $r$-edge $S$ and all $r$-subsets of $[n]$ containing a fixed element $x \notin S$ and at least one element of $S$. A degree version of the Hilton and Milner theorem was also studied by Frankl,  Han, Hao, and Yi~\cite{FHHY} who proved that if $n \geq \Omega(r^2)$ and ${\cal H}$ is a non-trivial intersecting $r$-uniform hypergraph on $n$ vertices, then $\delta({\cal H}) \leq \delta({\cal HM}_{n,r})={n-2 \choose r-2}-{n-r-2 \choose r-2}$.

\medskip
In general, the problem of finding a maximum matching in a hypergraph is an important problem in extremal combinatorics and has many applications in various different areas of mathematics and  computer science. 
Although the graph matching problem is fairly well-understood, and solvable in polynomial time, many problems related to hypergraph matching appear to be  difficult and remain unsolved. For example, the hypergraph matching problem is known to be NP-hard even for
3-uniform hypergraphs. One of the most basic open questions in this area was raised in 1965 by Erd\H{o}s \cite{erdos}, who asked to determine the maximum possible number of edges that can appear in any $r$-uniform hypergraph ${\cal H}$ with matching number less than $s$.
He conjectured that extremal constructions for this problem are one of the following two hypergraphs. The first is a clique consisting of all $r$-subsets of $rs-1$ vertices, which obviously has matching number $s-1$. The second is an $r$-uniform hypergraph on $n$ vertices containing all edges that intersect a fixed set of $s-1$ vertices, which also forces the matching number to be $s-1$.

\begin{conjecture}[Erd\H os~\cite{erdos}]\label{conj}
Let $n\geq rs-1$.
If ${\cal H}$ is an $n$-vertex $r$-uniform hypergraph such that 
\[
|{\cal H}|>\max\left\{{rs-1 \choose r}, {n \choose r}-{n-(s-1) \choose r}\right\},
\]
then ${\cal H}$ contains $s$ pairwise disjoint edges.
\end{conjecture}

When a hypergraph does not contain two edges whose intersection is an empty set, then it is intersecting, hence 
for $s=2$,
Conjecture~\ref{conj} is  equivalent to the Erd\H{o}s--Ko--Rado theorem. 
The graph case (i.e., when $r=2$) was verified separately by Erd\H{o}s and Gallai~\cite{erdos-gallai}. When $r=3$, Frankl, R\"{o}dl, and Ruci\'{n}ski~\cite{FRR}, proved the conjecture for $n\geq 4s$, and {\L}uczak and Mieczkowska~\cite{LM} proved it for every sufficiently large $s$. Recently, Frankl~\cite{F} proved the conjecture for $r=3$. For arbitrary $r$, Bollob\'{a}s, Daykin, and Erd\H{o}s~\cite{BDE} proved the conjecture for $n>2r^3(s-1)$, and  Huang, Loh, and Sudakov~\cite{HLS} for  $n\geq 3r^2(s-1)$. Recently, Frankl~\cite{F2} further improved it as follows.

\begin{theorem}[Frankl~\cite{F2}]\label{F3}
  Let $n\geq (2s-1)r-(s-1)$. If ${\cal H}$ is an $r$-uniform hypergraph on $n$ vertices such that $|{\cal H}|>{n \choose r}-{n-(s-1) \choose r},$ then ${\cal H}$ contains $s$ pairwise disjoint edges. 
\end{theorem}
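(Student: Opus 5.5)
The plan is the classical shifting-plus-induction scheme. I would induct on $s$, with the Erd\H{o}s--Ko--Rado theorem as the base case $s=2$. It is cleaner to prove the contrapositive: if $\nu({\cal H})\le s-1$ and $n\ge (2s-1)r-(s-1)$, then $|{\cal H}|\le {n\choose r}-{n-s+1\choose r}$.

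\emph{Step 1: reduce to shifted families.} Apply the $(i,j)$-shifts $S_{ij}$ for $i<j$ repeatedly until ${\cal H}$ is shifted (left-compressed). Shifting preserves $|{\cal H}|$ and does not increase the matching number, so we may assume ${\cal H}$ is shifted.

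\emph{Step 2: split at the vertex $1$.} Write ${\cal H}(1)=\{e\setminus\{1\}: 1\in e\in{\cal H}\}$ and ${\cal H}(\bar 1)=\{e\in{\cal H}:1\notin e\}$. By Pascal's rule,
\[
{n\choose r}-{n-s+1\choose r}={n-1\choose r-1}+\left({n-1\choose r}-{(n-1)-(s-2)\choose r}\right).
\]
The trivial bound $|{\cal H}(1)|\le {n-1\choose r-1}$ handles the first term. It therefore suffices to show
\[
|{\cal H}(\bar 1)|\le {n-1\choose r}-{(n-1)-(s-2)\choose r}.
\]
This is exactly the induction hypothesis for matching number $s-2$ on the $n-1$ vertices $[2,n]$. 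The hypothesis applies because $n-1\ge (2s-3)r-(s-2)$.

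\emph{Step 3 (the main obstacle).} The difficulty is that ${\cal H}(\bar 1)$ need not satisfy $\nu\le s-2$; it may well contain $s-1$ disjoint edges. So the deficiency of ${\cal H}(1)$ must pay for any excess of ${\cal H}(\bar 1)$. I would prove a joint inequality:
\[
|{\cal H}(\bar 1)|-\left({n-1\choose r}-{n-s+1\choose r}\right)\le {n-1\choose r-1}-|{\cal H}(1)|.
\]
The tool is shiftedness. If ${\cal H}(\bar 1)$ contains $s-1$ pairwise disjoint edges, then no edge through $1$ can avoid all of them. Shiftedness lets us move such configurations into an initial segment of length about $(2s-1)r-(s-1)$.

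\emph{Step 4: averaging.} I would then average over random (ordered) partitions of a random $((2s-1)r-(s-1))$-subset containing $1$ into blocks. In each partition we compare how many blocks lie in ${\cal H}(\bar 1)$ with how many $(r-1)$-sets, completed by $1$, are missing from ${\cal H}$. The bound $n\ge (2s-1)r-(s-1)$ is exactly what makes such a partition fit, so that each excess block of ${\cal H}(\bar 1)$ is charged to a missing edge through $1$. I expect this charging and averaging step to be the hardest part, and the one where the precise threshold on $n$ enters.

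Once the joint inequality holds, adding it to $|{\cal H}(1)|\le {n-1\choose r-1}$ gives $|{\cal H}|\le {n\choose r}-{n-s+1\choose r}$, which proves Theorem~\ref{F3}.
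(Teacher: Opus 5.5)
The paper does not prove this statement. It quotes it from Frankl~\cite{F2} and uses it as a black box, so your argument has to stand on its own, and it has a genuine gap. Steps~1 and~2 are fine, but Step~3 reduces nothing. Because $|{\cal H}|=|{\cal H}(1)|+|{\cal H}(\bar 1)|$ and $\binom{n}{r}-\binom{n-s+1}{r}=\binom{n-1}{r-1}+\binom{n-1}{r}-\binom{n-s+1}{r}$, your joint inequality is just a rearrangement of the conclusion $|{\cal H}|\le\binom{n}{r}-\binom{n-s+1}{r}$. So the closing step, ``adding it to $|{\cal H}(1)|\le\binom{n-1}{r-1}$'', is not what finishes the proof: the joint inequality already is the theorem. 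The induction hypothesis only disposes of the case $\nu({\cal H}(\bar 1))\le s-2$. In the case you rightly call the main one, $\nu({\cal H}(\bar 1))=s-1$, the whole statement is still open. Everything therefore rests on Step~4, which gives no definition of the partitions, no per-partition inequality, and no verification. ``Each excess block is charged to a missing edge through $1$'' is a hope, not an argument.

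Worse, the charging as you describe it cannot give the sharp bound once $n$ is noticeably larger than the window size $m=(2s-1)r-(s-1)$. A fixed $r$-subset of $[2,n]$ is one of the $b\le (m-1)/r$ blocks of a random partition with probability $b/\binom{n-1}{r}$. A fixed $(r-1)$-subset is among the $c$ sets you inspect with probability $c/\binom{n-1}{r-1}$. These rates differ by the factor $\binom{n-1}{r}/\binom{n-1}{r-1}=\frac{n-r}{r}$. The star $\{e:e\cap[s-1]\neq\emptyset\}$ has nothing missing through $1$, so blocks meeting $[2,s-1]$ must be allowed free of charge. The clique $\binom{[rs-1]}{r}$, however, has partitions with a block of ${\cal H}(\bar 1)$ inside $[s,rs-1]$ and no block meeting $[2,s-1]$. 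So a local inequality of your type, excess $\le\lambda\cdot$(local missing sets), needs $\lambda\ge 1/c$. Averaging it then puts the coefficient $\lambda\frac{c}{b}\cdot\frac{n-r}{r}\ge\frac{n-r}{m-1}$ on the deficiency instead of $1$.

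A standard fix is to reduce to $n=m$ first, by induction on $n$ and $r$, splitting off the largest vertex $n$:
\begin{itemize}
\item The edges avoiding $n$ are handled by the statement for $n-1$.
\item Suppose the link ${\cal H}(n)$ contained $s$ pairwise disjoint $(r-1)$-sets. Since $n-1-s(r-1)\ge s$ for $n\ge rs+1$, you could extend them by distinct vertices $x_i<n$, and shiftedness would give $s$ disjoint edges. Hence $\nu({\cal H}(n))\le s-1$, and the statement for $r-1$ applies, since its threshold is smaller by $2s-1$.
\item Pascal's rule adds the two bounds to $\binom{n}{r}-\binom{n-s+1}{r}$.
\end{itemize}
But even at $n=m$, where your window is the whole ground set, you still need a genuinely sharp inequality trading the excess of ${\cal H}(\bar 1)$ against the sets missing through $1$. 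That is the real content of Frankl's theorem, and your proposal does not supply it.
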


Bollobás, Daykin, and Erd\H{o}s \cite{BDE} also considered a minimum degree version of Conjecture~\ref{conj} and showed that if $n>2r^3(s-1)$ and ${\cal H}$ is an  $r$-uniform hypergraph on $n$ vertices with $\delta({\cal H})\geq {n-1 \choose r-1}-{n-s\choose r-1}$, then $\mathcal{H}$ has matching number $s$.
Recently, Huang, and Zhao~\cite{huang} improved this result by reducing the bound on $n$ to $n\geq 3r^2s$.

\medskip

In this paper, we extend these results by considering hypergraphs satisfying an Ore-degree condition replacing the minimum degree condition. 
There has been much effort to extend results on graphs from minimum degree conditions to Ore-degree conditions. The first result of this type is the famous theorem of Ore~\cite{ore}: If for every pair of non-adjacent vertices $u$ and $v$ in an $n$-vertex graph $G$, we have $\deg(u)+\deg(v)\geq n$, then $G$ is Hamiltonian. Other results of this type include results on  $k$-ordered Hamiltonian graphs~\cite{fad}, equitable colorings~\cite{equ}, an Ore version of Brooks' theorem~\cite{brooks}, $H$-linked graphs~\cite{Fe} and partitions into  monochromatic cycles~\cite{Barat}.

The main results of the paper are as follows. The first two can be viewed as Ore-degree analogues of the Erd\H os--Ko--Rado theorem and the Hilton--Milner theorem, respectively.

\begin{theorem}\label{erdosko}
Let $r\geq 3$ and  $n\geq 2r+2$. 
If ${\cal  H}\subseteq {[n]\choose r}$ is an intersecting hypergraph, then $\sigma_r({\cal H})\leq r{n-2 \choose r-2}$. There is equality only  when  ${\cal  H}$ is a  $1$-star.
\end{theorem}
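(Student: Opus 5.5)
The plan is to exhibit a single non-edge $r$-set $S$ with $\deg(S)\le r\binom{n-2}{r-2}$, with strict inequality unless ${\cal H}$ is a $1$-star. The natural tool is the Huang--Zhao theorem (Theorem~\ref{hoangzho}), applied through a shifting-free averaging argument. First observe that for a $1$-star centered at $x$, every non-edge avoids $x$, and each vertex $v\neq x$ has degree exactly $\binom{n-2}{r-2}$. Hence $\sigma_r=r\binom{n-2}{r-2}$, so equality is attained.

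Suppose now that ${\cal H}$ is intersecting and not a $1$-star; we may assume it is nonempty and not complete. The key step is to find $r$ vertices of small degree that do not form an edge. Let $L=\{v:\deg(v)<\binom{n-2}{r-2}\}$. Since ${\cal H}$ is intersecting and not a $1$-star, Theorem~\ref{hoangzho} (valid for $n\ge 2r+1$) gives $L\neq\emptyset$. Also $\sum_v\deg(v)=r|{\cal H}|\le r\binom{n-1}{r-1}$ by Erd\H{o}s--Ko--Rado, so the average degree is at most $\frac{r}{n}\binom{n-1}{r-1}=\binom{n-2}{r-2}\cdot\frac{n-1}{n-r+1}\cdot\frac{r}{n}\cdot\frac{n}{r-1}\cdots$; more usefully, the average is $\frac{r-1}{n-1}\binom{n-1}{r-1}\cdot\frac{r}{r-1}\cdot\frac{n-1}{n}$, which is only slightly above $\binom{n-2}{r-2}$. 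I would rather work directly with sets. Pick $u\in L$ of minimum degree. Then choose the remaining $r-1$ vertices by averaging over all $(r-1)$-subsets $T$ of $[n]\setminus\{u\}$ with $T\cup\{u\}\notin{\cal H}$.

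The most edges containing $u$ number $\deg(u)<\binom{n-2}{r-2}$, which is tiny compared with $\binom{n-1}{r-1}$. So almost all $T$ are admissible, and the average of $\sum_{v\in T}\deg(v)$ over all $T$ is $\frac{r-1}{n-1}(r|{\cal H}|-\deg(u))$. This is at most $\frac{r-1}{n-1}r\binom{n-1}{r-1}=r(r-1)\binom{n-2}{r-2}$, which is too weak: the trivial averaging loses a factor of $r-1$.

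The real mechanism therefore has to be structural, and this is the main obstacle. Low-degree vertices must be found in abundance and must form a non-edge together. The plan is as follows. Let $d_1\le d_2\le\dots\le d_n$ be the sorted degrees and take $S_0$ to be the $r$ smallest-degree vertices. If $S_0\notin{\cal H}$, bound $\sum_{i\le r}d_i$. If $S_0\in{\cal H}$, swap one vertex for the next smallest to get a non-edge; this works because the $r$-sets among the $2r$ lowest vertices cannot all be edges of an intersecting family when $2r\ge 2r$.

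For the bound I would use an intersecting-family degree argument. Fix $x$ of maximum degree. Every edge avoiding $x$ meets every edge through $x$, and Frankl--Tokushige style counting (usable for $n\ge 2r+2$) gives that at least $n-2r$ or so vertices have degree at most $\binom{n-2}{r-2}$, with strict inequality unless ${\cal H}$ is the star. Concretely, for $y\neq x$, $\deg(y)=|{\cal H}(x,y)|+|{\cal H}(\bar x,y)|$, where the first term is at most $\binom{n-2}{r-2}$. The non-star case must then trade the excess from edges avoiding $x$ against deficit in ${\cal H}(x,\cdot)$: each edge $F$ avoiding $x$ kills all $\binom{n-r-1}{r-1}$ sets $\{x\}\cup G$ with $G\cap F=\emptyset$. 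Summing over $r$ chosen vertices outside a fixed edge and using $n\ge 2r+2$ to dominate these losses over the gains yields $\deg(S)<r\binom{n-2}{r-2}$. I expect this balancing inequality to be the delicate part, and would first try it with $S$ chosen disjoint from an edge avoiding $x$.
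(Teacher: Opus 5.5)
There is a genuine gap. After Huang--Zhao gives you one vertex of degree below $\binom{n-2}{r-2}$, the proposal never proves what the theorem actually says: a non-star intersecting $\mathcal{H}$ has a non-edge $S$ with $\deg(S)<r\binom{n-2}{r-2}$.

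Your route through the $2r$ lowest-degree vertices needs at least $2r$ light vertices, that is, of degree at most $\binom{n-2}{r-2}$, with strictness somewhere. You attribute to ``Frankl--Tokushige style counting'' the statement that all but roughly $2r$ vertices are light. That is not a byproduct of their argument. It is essentially the recent Frankl--Wang and Huang--Rao bound on the $(2r+1)$-st largest degree, quoted in the paper's remark.

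Even granting it, you get only $n-2r$ light vertices. These need not span a non-edge unless $n-2r\ge 2r$, since all $r$-subsets of a $(2r-1)$-set form an intersecting family. For $n=2r+2$ you get just two vertices, fewer than $r$. So this route reaches at best $n\ge 4r$, which is exactly the weaker result in the remark, and it does not address strictness.

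Your alternative ``balancing inequality'' for an $S$ disjoint from an edge $F$ avoiding $x$ is not an argument either. Once you write $\deg(y)=|\mathcal{H}(x,y)|+|\mathcal{H}(\bar x,y)|$, it is just a restatement of $\deg(S)<r\binom{n-2}{r-2}$, and nothing is offered to prove it. Near $n=2r$ it is far from automatic. One edge avoiding $x$ removes only $\binom{n-r-2}{r-2}$ link sets through each $y\in S$: that is $\binom{r}{2}$ when $n=2r+2$, against $\binom{n-2}{r-2}=\binom{2r}{r-2}$. Whether the accumulated losses beat the gains $|\mathcal{H}(\bar x,y)|$ for some admissible $S$ is the whole difficulty.

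Your dismissal of averaging rests on an arithmetic slip: $\frac{r-1}{n-1}\cdot r\binom{n-1}{r-1}=r\binom{n-2}{r-2}$, not $r(r-1)\binom{n-2}{r-2}$. So the naive average of $\deg(\{u\}\cup T)$ overshoots the target by less than $\binom{n-2}{r-2}$, not by a factor $r-1$. Averaging is the right tool. The missing idea is to average globally, play that against structural upper bounds on $|\mathcal{H}|$, and argue by contradiction rather than trying to locate a light non-edge.

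The paper does exactly this:
\begin{itemize}
\item It assumes $\sigma_r(\mathcal{H})\ge r\binom{n-2}{r-2}$, sums the Ore inequality over all non-edges and applies Cauchy--Schwarz (Lemma~\ref{lem}) to get $|\mathcal{H}|\ge\frac{n}{r}\binom{n-2}{r-2}$. The inequality is strict by Lemma~\ref{reg-lemma} (Ihringer--Kupavskii).
\item It chooses $i$ with $\binom{n-1}{r-1}-\binom{n-i}{r-1}<\Delta(\mathcal{H})\le\binom{n-1}{r-1}-\binom{n-i-1}{r-1}$ and applies Frankl's maximum-degree theorem. For $i=2$ the lower and upper bounds already clash when $n\ge 2r+2$.
\item For $i\ge 3$ it first sharpens the lower bound to $r|\mathcal{H}|>\binom{n-1}{r-1}-\binom{n-i}{r-1}+(n-1)\binom{n-2}{r-2}$. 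This uses the Ore condition on the non-edges $H_1\cup\{y\}$ and $H_2\cup\{y'\}$, built from two disjoint link sets $H_1,H_2$ of a maximum-degree vertex $x$, or else averaging over $r$-sets outside the edge $H_1\cup\{x\}$. The sharpened bound again contradicts Frankl's bound.
\end{itemize}
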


\begin{theorem}\label{main3}
Let $r \geq 3$ and $n \geq 4r^2$.
If ${\cal  H}\subseteq {[n]\choose r}$ 
is a non-trivial intersecting hypergraph, then $\sigma_r({\cal H})\leq r \left({n-2 \choose r-2}-{n-r-2 \choose r-2}\right)$.
\
\end{theorem}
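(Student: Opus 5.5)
We need $\sigma_r(\mathcal H)\le r\,d$, where $d:={n-2\choose r-2}-{n-r-2\choose r-2}$; that is, we must exhibit a non-edge $r$-set whose degree sum is at most $rd$. Write $\deg(v)$ for degrees in $\mathcal H$. The plan is to find $r$ vertices, each of degree at most $d$, that do not form an edge. The key input is the result of Frankl, Han, Hao, and Yi: a non-trivial intersecting family has $\delta(\mathcal H)\le d$. We need a stronger, counting version of it. Let $L=\{v:\deg(v)\le d\}$. We aim to show that $L$ is large, say $|L|\ge n-c r^2$ for an absolute constant $c$, whenever $\mathcal H$ is non-trivial intersecting.

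\textbf{Counting high-degree vertices.} Suppose $v$ has $\deg(v)>d$. Since $\mathcal H$ is non-trivial, there is an edge $F$ with $v\notin F$. Every edge through $v$ must meet $F$, so
\[
\deg(v)\le {n-1\choose r-1}-{n-r-1\choose r-1}\approx r{n-2\choose r-2}.
\]
This bound alone is too weak. Instead we count edges in two ways. By Hilton--Milner,
\[
|\mathcal H|\le {n-1\choose r-1}-{n-r-1\choose r-1}+1\le r{n-2\choose r-2}.
\]
On the other hand $\sum_v \deg(v)=r|\mathcal H|$, so the number of vertices with $\deg(v)>d$ is less than
\[
\frac{r|\mathcal H|}{d}\le \frac{r^2{n-2\choose r-2}}{d}.
\]
For $n\ge 4r^2$ we have $d\ge {n-2\choose r-2}\bigl(1-(1-\tfrac{r}{n-2})^{r-2}\bigr)\approx \frac{r(r-2)}{n}{n-2\choose r-2}$. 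This is small, so the resulting bound is about $rn/(r-2)$, which is useless. So crude averaging fails, and we must use structure.

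\textbf{Structural route.} We use a vertex $x$ of maximum degree and a covering argument. Let $\tau$ be the cover number of $\mathcal H$; by non-triviality $\tau\ge2$.

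In the case $\tau\ge 3$, every vertex $v$ and every edge $F\not\ni v$ leave the edges through $v$ pairwise covered by a cover of size at least $2$. The standard Frankl bound then gives $\deg(v)\le r^2{n-3\choose r-3}$, and this is at most $d\approx r^2{n-3\choose r-3}(1-O(r^2/n))$ only barely. I expect the sharper estimate $\deg(v)\le r^2{n-3\choose r-3}-\Omega(\cdot)$, obtained via two disjoint-from-$v$ edges, to suffice. In that case all vertices have degree at most $d$, and any non-edge works; a non-edge exists, since otherwise $\mathcal H$ would be complete and hence not intersecting.

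In the case $\tau=2$, let $\{x,y\}$ be a cover. For a vertex $v\notin\{x,y\}$, the edges through $v$ contain $x$ or $y$ and must meet edges avoiding $x$ (which contain $y$) and edges avoiding $y$. This gives a Hilton--Milner-type degree bound of $d$ up to lower-order corrections, as in Frankl--Han--Hao--Yi; we redo their argument to get it for every $v\ne x,y$. Then we choose an $r$-set $S\subseteq [n]\setminus\{x,y\}$ that is a non-edge. One exists, because no edge avoids both $x$ and $y$. Hence $\deg(S)\le rd$.

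\textbf{Main obstacle.} The hard step is proving the degree bound $\deg(v)\le d$ for all $v$ outside a small set, with exactly the constant needed, rather than only for the minimum-degree vertex. I would first try to adapt the Frankl--Han--Hao--Yi shifting and kernel argument so that it applies vertex by vertex, using $n\ge 4r^2$ to absorb the error terms.
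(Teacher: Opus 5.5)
There is a genuine gap. The step that carries the whole proposal is that every vertex outside a small exceptional set has degree at most $d$. You never establish it, and both case claims you offer for it are false.

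\textbf{The case $\tau\ge 3$.} Your estimate of $d$ is off by a factor of $r$. The inequality $c\binom{a-1}{b-1}\ge\binom{a}{b}-\binom{a-c}{b}$ gives $d\le r\binom{n-3}{r-3}$, so the bound $\deg(v)\le r^2\binom{n-3}{r-3}$ is nowhere near sufficient. Worse, the conclusion itself fails. Take $r$-sets $F_1\supseteq\{a,b\}$, $F_2\supseteq\{b,c\}$, $F_3\supseteq\{a,c\}$ with $F_1\cap F_2=\{b\}$, $F_2\cap F_3=\{c\}$, $F_1\cap F_3=\{a\}$, and a vertex $v\notin F_1\cup F_2\cup F_3$. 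Let ${\cal H}$ consist of $F_1,F_2,F_3$ together with all $\{v\}\cup L$, where $L$ is an $(r-1)$-set meeting every $F_i$. This ${\cal H}$ is intersecting with $\tau({\cal H})=3$. The sets $L\ni a$ are exactly $a$ plus an $(r-2)$-subset of $[n]\setminus\{v,a\}$ meeting $F_2$, so there are exactly $d$ of them. The sets $L\supseteq\{b,c\}$ with $a\notin L$ add $\binom{n-4}{r-3}$ more. Hence $\deg(v)>d$.

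\textbf{The case $\tau=2$.} Take $\mathcal{HM}_{n,r}$ with distinguished edge $S$ and the cover $\{x,y\}$ with $y\in S$. Each of the $r-1$ vertices of $S\setminus\{y\}$ has degree $\binom{n-2}{r-2}+1$. This is at least $\frac{n-2}{r(r-2)}$ times $d$, so it is not a lower-order correction. A non-edge chosen inside $[n]\setminus\{x,y\}$ can therefore have degree sum far above $rd$. Moreover, $\mathcal{HM}_{n,r}$ has $\sigma_r=rd$ exactly, so no error term of any size could be absorbed.

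\textbf{What would be needed, and how the paper avoids it.} To make your strategy work you would need a genuinely new statement: at most $n-2r$ vertices of a non-trivial intersecting family have degree exceeding $d$. Then any $2r$ low-degree vertices contain a non-edge. This is a many-vertex strengthening of the Frankl--Han--Hao--Yi minimum-degree theorem. As your own computation shows, it cannot come from counting, and you leave it open as the ``main obstacle''.

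The paper sidesteps individual degree bounds altogether. It assumes $\sigma_r({\cal H})>rd$ and converts this into lower bounds on edge counts by averaging degree sums over non-edges.
\begin{itemize}
\item Lemma~\ref{lem} gives $|{\cal H}|>\frac{n}{r}d$. For $r\ge 6$ and $\Delta\le\binom{n-1}{r-1}-\binom{n-3}{r-1}$, this contradicts Frankl's maximum-degree bound $|{\cal H}|\le 3\binom{n-2}{r-2}$.
\item Otherwise, the link of a maximum-degree vertex $u$ contains three pairwise disjoint sets. By Theorem~\ref{3-matching}, at most $(r-1)^2\binom{n-4}{r-3}$ edges avoid $u$.
\item On the other hand, the paper averages over the $r$-subsets of $[n]\setminus(E\cup E'\cup\{u\})$, all of which are non-edges because they miss the edge $E$. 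Together with Wilson's theorem, this forces more than $\frac{r-2}{2}\binom{n-2}{r-2}$ edges avoiding $u$, a contradiction.
\item The cases $3\le r\le 5$ are handled by Proposition~\ref{small-r-prop} via Han--Kohayakawa.
\end{itemize}
The hypothesis only controls degree sums over non-edges, and that is precisely what the paper exploits.
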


The bounds on $\sigma_r({\cal H})$ in both Theorems~\ref{erdosko} and \ref{main3} are best possible as given by  $1$-stars and the Hilton--Milner construction ${\cal HM}_{n,r}$. 
We also prove an  Ore-degree version of the Erd\H{o}s' Matching Conjecture.

\begin{theorem}\label{mainn}
Let $s\geq 2$ and $n\geq 3r^2(s-1)$. If ${\cal  H}$ is an $r$-uniform hypergraph on $n$ vertices and $\sigma_r({\cal H})>r \left({n-1 \choose r-1}-{n-s \choose r-1}\right)$, then ${\cal  H}$ contains $s$ pairwise disjoint edges.
\end{theorem}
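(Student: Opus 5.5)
We argue by induction on $s$, with the Ore condition playing the role that the minimum degree plays in the Bollob\'as--Daykin--Erd\H{o}s argument. Write $D_s:=\binom{n-1}{r-1}-\binom{n-s}{r-1}$, so the hypothesis is $\sigma_r(\mathcal H)>rD_s$.

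\textbf{Base case $s=2$.} Here $D_2=\binom{n-2}{r-2}$, and we invoke Theorem~\ref{erdosko}. Since $n\ge 3r^2\ge 2r+2$, an intersecting $\mathcal H$ has $\sigma_r(\mathcal H)\le r\binom{n-2}{r-2}$. Hence $\mathcal H$ is not intersecting and contains two disjoint edges. (For $r=2$ we would argue directly: a graph with no two disjoint edges is a star or a triangle, and both have a non-edge pair of small degree sum.)

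\textbf{Reduction.} For the induction step we first observe that if $\mathcal H=\binom{[n]}{r}$ the conclusion is trivial. Otherwise some non-edge $S$ exists, and the hypothesis gives some vertex $v\in S$ with $\deg(v)>D_s$. A better plan is to work with the set $B$ of ``big'' vertices, those of degree greater than $D_s$. Any $r$-set avoiding $B$ has degree sum at most $rD_s$, so it must be an edge. Thus every $r$-subset of $[n]\setminus B$ is an edge of $\mathcal H$.

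\textbf{Case $|B|\le s-1$.} Then $|[n]\setminus B|\ge n-s+1\ge rs$, and $[n]\setminus B$ spans a complete $r$-graph. This complete $r$-graph already contains $s$ disjoint edges.

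\textbf{Case $|B|\ge s$.} We build the matching greedily. We will use $s$ big vertices $b_1,\dots,b_s$ and, for $i=1,\dots,s$, choose an edge $e_i\ni b_i$ avoiding all previously chosen edges and the other $b_j$'s. At step $i$ the forbidden set $F$ has $|F|\le r(i-1)+(s-i)\le rs$. The number of edges through $b_i$ that meet $F\setminus\{b_i\}$ is at most $|F|\binom{n-2}{r-2}$. So $e_i$ exists provided
\[
D_s=\binom{n-1}{r-1}-\binom{n-s}{r-1}>rs\binom{n-2}{r-2}.
\]
This inequality fails, since $D_s\approx (s-1)\binom{n-2}{r-2}$. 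So the naive greedy does not work, and this is the main obstacle.

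\textbf{Fix for the obstacle.} We would proceed as in Huang--Loh--Sudakov and use the full edges inside $[n]\setminus B$. Pick $s$ big vertices. For each $b_i$, count edges through $b_i$ whose other $r-1$ vertices lie in $[n]\setminus B$. If these counts are large (at least $\binom{n-1}{r-1}-\binom{n-s}{r-1}$ minus edges using other big vertices), we find the edges $e_i$ greedily. The key point is that we only need to avoid the at most $r(s-1)$ vertices used by the other chosen edges, and the other $r-1$ vertices of each $e_i$ can also be taken from the small side.

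If instead many edges at $b_i$ go into $B$, then $B$ is large. When $|B|\ge rs$, we look for a matching of size $s$ inside a structure where $B$-vertices are big and all small $r$-sets are edges. In that case we would pick $e_i$ to contain exactly one big vertex, with the rest chosen from the complete small part. This requires $\deg$ restricted to ``one big vertex plus small vertices'' to exceed the number of such sets hitting a forbidden set of size $\le r(s-1)$, which is at most $r(s-1)\binom{n-2}{r-2}$. That bound is still too big relative to $D_s$, so the plan is to avoid this count: any $r$-set containing at most one big vertex has degree sum at most $\deg(b)+(r-1)D_s$, which is not directly useful. Instead, when the complete part $[n]\setminus B$ has size $m$, we take $\lfloor m/r\rfloor$ disjoint edges there and need only $s-\lfloor m/r\rfloor$ edges through big vertices. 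The remaining forbidden set is then small relative to the number of edges required, and the condition $n\ge 3r^2(s-1)$ is what makes the final counting close.

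I expect balancing these two regimes (few versus many big vertices) to be the main difficulty.
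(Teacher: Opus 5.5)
Your base case is fine, and so is the reduction: every $r$-subset of $[n]\setminus B$ must be an edge, so you are done whenever $|[n]\setminus B|\ge rs$. But that only disposes of the easy regime. The real case is $|[n]\setminus B|<rs$, where all but fewer than $rs$ vertices have degree above $D_s$. If $B=[n]$, the only information your reduction retains is $\delta(\mathcal H)>D_s$, which is the minimum-degree version of the matching problem itself. For this case the proposal has no argument. You correctly note that the greedy needs $D_s>rs\binom{n-2}{r-2}$, which is false, but the suggested fix does not get around it. When $m<r$ the complete small part contributes $\lfloor m/r\rfloor=0$ edges, so all $s$ edges must still go through big vertices. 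Routing each $e_i$ through one big vertex still requires beating a forbidden set of up to $r(s-1)$ vertices, which is the same count $r(s-1)\binom{n-2}{r-2}$ that you concede exceeds $D_s$. Note also that the induction hypothesis is never used in your inductive step.

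What is missing is the paper's combination of degree control with a global edge count. First, induction caps the maximum degree. Deleting a vertex $v$ lowers $\deg(S)$ for each $S\not\ni v$ by at most $r\binom{n-2}{r-2}$, and $\binom{n-1}{r-1}-\binom{n-2}{r-2}=\binom{n-2}{r-1}$. So $\mathcal H-v$ satisfies the hypothesis for $s-1$ on $n-1\ge 3r^2(s-2)$ vertices, and its $(s-1)$-matching spans $r(s-1)$ vertices; if $\deg(v)>r(s-1)\binom{n-2}{r-2}$, some edge through $v$ extends it. Second, Lemma~\ref{HLS} (a smarter greedy with threshold $2(s-1)\binom{n-2}{r-2}$) is used not to build the matching but to show that at most $s-1$ vertices have degree above $2(s-1)\binom{n-2}{r-2}$. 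Together these imply that for any $(s-1)$-matching $\mathcal M$ (available by induction or edge-maximality), at most $(3r-2)(s-1)^2\binom{n-2}{r-2}$ edges meet $V(\mathcal M)$. Third, Lemma~\ref{lem} gives $|\mathcal H|\ge \frac{n}{r^2}\sigma_r(\mathcal H)>\frac{n}{r}(s-1)\binom{n-s}{r-2}$, which exceeds this once $n\ge 3r^2(s-1)$, so some edge avoids $V(\mathcal M)$. Your observation about $B$ could actually replace the third step: in the hard case $|B|\ge n-rs+1$, so $|\mathcal H|>\frac{n-rs+1}{r}D_s$, and a short computation shows this still suffices. But without the first two steps nothing prevents the $r(s-1)$ vertices of $\mathcal M$ from carrying total degree far above $\frac{n}{r}D_s\approx\frac{r-1}{r}(s-1)\binom{n-1}{r-1}$, and no such count can close.
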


Note that the $r$-uniform hypergraph on $n$ vertices containing all edges intersecting a fixed set of $s-1$ vertices has matching number $s-1$ and $\sigma_r({\cal H})=r \left({n-1 \choose r-1}-{n-s \choose r-1}\right)$. Thus, the bound given for $\sigma_r({\cal H})$ in Theorem~\ref{mainn} is best possible.

\medskip
W also extend the Ore-degree version of the Erd\H{o}s Matching Conjecture to properly edge-colored hypergraphs. If we assign colors to the edges of a hypergraph $\mathcal{H}$, we say that 
 ${\cal H}$ is {\it properly edge-colored} if no two incident edges receive the same color. Thus, the 
 edges incident to a vertex $v$ all receive distinct colors.
A \emph{$s$-rainbow matching} in a family of properly edge-colored hypergraphs ${\cal H}_{1},\ldots, {\cal H}_{s}$ is a collection of pairwise disjoint edges $e_{1},\ldots,e_{s}$ of distinct colors such that $e_{1}\in {\cal H}_{1},\ldots,e_{s}\in {\cal H}_{s}$. 
Huang, Li, and Wang~\cite{HLW} proved a rainbow version of the Erd\H{o}s Matching Conjecture, as follows.

\begin{theorem}[Huang, Li, and Wang~\cite{HLW}]\label{HLW}
Let $n\geq 3r^{2}s$. If ${\cal H}_{1}, \dots, {\cal H}_{s}$ are properly edge-colored $r$-uniform hypergraphs on $n$ vertices such that for each $i$, $|{\cal H}_{i}|>\binom{n}{r}-\binom{n-s+1}{r}$, then there exists an $s$-rainbow matching in ${\cal H}_{1}, \dots, {\cal H}_{s}$.
\end{theorem}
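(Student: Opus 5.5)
We propose to argue by induction on $s$, following the strategy of Huang, Loh, and Sudakov for the uncolored problem, with the proper coloring used only to control how many edges are ``blocked'' by a color. For $s=1$ the statement is trivial, since $|{\cal H}_1|>0$. For $s\ge 2$ we fix a degree threshold $D$, to be chosen at the end, of order $r s\binom{n-2}{r-2}$, and split into two cases. Case~1: some ${\cal H}_i$ has a vertex $v$ with $\deg_{{\cal H}_i}(v)>D$. Case~2: every ${\cal H}_i$ has maximum degree at most $D$.

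\textbf{Case 1.} Relabel so that $i=s$. Delete $v$ and consider ${\cal H}_1-v,\dots,{\cal H}_{s-1}-v$ on $n-1$ vertices. Each loses at most $\binom{n-1}{r-1}$ edges, so each still has more than
\[
\binom{n}{r}-\binom{n-s+1}{r}-\binom{n-1}{r-1}=\binom{n-1}{r}-\binom{(n-1)-(s-2)}{r}
\]
edges. This is exactly the hypothesis of the theorem for $s-1$ on $n-1\ge 3r^2(s-1)$ vertices, and the restricted colorings remain proper. Induction then gives an $(s-1)$-rainbow matching $M$ avoiding $v$. It covers $r(s-1)$ vertices and uses $s-1$ colors. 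In ${\cal H}_s$, at most $r(s-1)\binom{n-2}{r-2}$ edges through $v$ meet $V(M)$. Because the coloring is proper, at most $s-1$ edges through $v$ carry a color used by $M$. So as soon as $D\ge r(s-1)\binom{n-2}{r-2}+s-1$, some edge at $v$ extends $M$ to an $s$-rainbow matching.

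\textbf{Case 2.} Here all degrees are at most $D$, and we want a direct (greedy or counting) construction. The naive greedy choice of $e_k\in{\cal H}_k$ would discard up to $r(s-1)D$ edges meeting earlier edges. It would also discard at most $(s-1)n/r$ edges of previously used colors, since each color class of a properly colored hypergraph is a matching. This needs $|{\cal H}_k|\approx (s-1)\frac{n}{r}\binom{n-2}{r-2}$ to beat roughly $r^2s^2\binom{n-2}{r-2}$, which only works for $n\gtrsim r^3 s$.

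To reach $n\ge 3r^2 s$ we would instead follow Huang--Loh--Sudakov and count more carefully. Take a maximum rainbow matching $M$ of size $t<s$, with $e_j\in{\cal H}_{i_j}$, and an unused index $k$. Every edge of ${\cal H}_k$ must meet $V(M)$ or carry one of the $t$ used colors. Then use the low-degree assumption together with exchange arguments to swap an edge of $M$ and obtain a contradiction. For instance, if many edges of ${\cal H}_k$ meet $e_j$ in a single vertex, one can replace $e_j$ and reroute ${\cal H}_{i_j}$. The color obstruction contributes only $O(sn/r)$ edges, which is negligible against $\binom{n-2}{r-2}$ once $r\ge 3$. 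The case $r=2$ has to be checked directly, which should be easy.

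The main obstacle is Case~2: balancing $D$ so that Case~1 still works while the counting in Case~2 succeeds with only $n\ge 3r^2s$, rather than the $r^3 s$ that plain greedy gives. We would first try the Huang--Loh--Sudakov choice of threshold and their double-counting over pairs (edge of ${\cal H}_k$, edge of $M$), adding the color term as a lower-order error.
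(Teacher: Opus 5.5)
Your Case~1 is correct. After deleting $v$, each of the other $s-1$ families still has more than $\binom{n-1}{r}-\binom{(n-1)-(s-2)}{r}$ edges on $n-1\ge 3r^2(s-1)$ vertices, so induction gives an $(s-1)$-rainbow matching $M$ avoiding $v$. Properness at $v$ blocks only $s-1$ further edges. Hence you may assume $\Delta({\cal H}_i)\le r(s-1)\binom{n-2}{r-2}+s-1$ for all $i$. The paper only quotes this theorem from Huang--Li--Wang and gives no proof of it, but this is exactly the first step of its proof of the Ore analogue, Theorem~\ref{properlycolored}.

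The gap is Case~2, which is where all the difficulty lies, and you do not prove it. With a single threshold $D\approx rs\binom{n-2}{r-2}$, any count of the edges of ${\cal H}_k$ meeting $V(M)$ via vertex degrees loses about $r(s-1)D\approx r^2s^2\binom{n-2}{r-2}$. But $|{\cal H}_k|$ is only about $(s-1)\frac{n}{r-1}\binom{n-2}{r-2}$. This is the $r^3s$ barrier you identify. The ``exchange arguments'' and ``double counting over pairs'' are never specified, so nothing in the proposal gets below that barrier.

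The missing idea is a second, much lower threshold $D'=2(s-1)\binom{n-2}{r-2}+s-1$, together with a dichotomy on it.

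\textbf{First branch.} Suppose every ${\cal H}_i$ has at least $s$ vertices of degree $>D'$. Pick distinct $v_1,\dots,v_s$ greedily with $\deg_{{\cal H}_i}(v_i)>D'$. Pass to the $(r-1)$-uniform links of $v_i$ in $[n]\setminus\{v_1,\dots,v_s\}$, coloured by inheritance; this colouring is still proper. These links have more than $(s-1)\binom{n-2}{r-2}+s-1>\binom{n-s}{r-1}-\binom{n-2s+1}{r-1}$ edges on $n-s\ge 3(r-1)^2s$ vertices. So the statement for $r-1$ (by induction on $r$), or a rainbow analogue of Lemma~\ref{HLS}, gives a rainbow matching of links. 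Adding the $v_i$ lifts it.

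\textbf{Second branch.} Otherwise some ${\cal H}_i$ has at most $s-1$ vertices of degree above $D'$. Take an $(s-1)$-rainbow matching $M$ in the remaining families by induction on $s$. The degree sum over $V(M)$ in ${\cal H}_i$ is now at most $(s-1)\Delta+(r-1)(s-1)D'=(s-1)^2\bigl((3r-2)\binom{n-2}{r-2}+r\bigr)$, rather than roughly $r^2(s-1)^2\binom{n-2}{r-2}$. Since $|{\cal H}_i|>(s-1)\binom{n-s+1}{r-1}$, this leaves more than $(s-1)\frac{n-r(s-1)}{r}$ edges disjoint from $V(M)$ when $n\ge 3r^2s$. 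Each colour class is a matching, so one of these edges has a colour not used by $M$.

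For $r=2$ one already has $\Delta\le 3(s-1)=D'$, so only the second branch occurs; this is the base of the induction on $r$. This two-threshold structure is precisely what the paper uses for Theorem~\ref{properlycolored}.
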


We prove the following Ore-degree version of Theorem~\ref{HLW}.

\begin{theorem}\label{properlycolored}
Let $r\geq 3$ and $n>3r^2s$. If ${\cal H}_{1}, \dots, {\cal H}_{s}$ are properly edge-colored $r$-uniform hypergraphs on $n$ vertices such that $\sigma_r({\cal H}_i)> r\left({n-1 \choose r-1}-{n-s \choose r-1}\right)$, then there exists an $s$-rainbow matching in ${\cal H}_{1}, \dots, {\cal H}_{s}$.
\end{theorem}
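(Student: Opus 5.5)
We reduce Theorem~\ref{properlycolored} to the minimum-degree/edge-count setting of Theorem~\ref{HLW}, using the proper colouring to resolve colour conflicts greedily.

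\textbf{Step 1: each $\mathcal{H}_i$ is large.} Write $D=\binom{n-1}{r-1}-\binom{n-s}{r-1}$. Suppose $|\mathcal{H}_i|\le \binom{n}{r}-\binom{n-s+1}{r}$. Then there are many non-edges, so we can look at the set $L_i$ of vertices with $\deg_{\mathcal{H}_i}(v)\le D$. If some non-edge $r$-set lay entirely inside $L_i$, its degree sum would be at most $rD$, contradicting $\sigma_r(\mathcal{H}_i)>rD$. Hence every $r$-subset of $L_i$ is an edge, so $|L_i|\le rs$ is forced: otherwise $\binom{L_i}{r}$ already contains $s$ disjoint edges, and these give many disjoint edges of $\mathcal{H}_i$, which we use in Step 3.

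\textbf{Step 2: dichotomy.} Either $|L_i|$ is small, so all but few vertices have degree $>D$, or $\mathcal{H}_i$ contains a large clique. In the first case, count edges by summing degrees over $V\setminus L_i$:
\[
|\mathcal{H}_i|\ \ge\ \frac{(n-|L_i|)(D+1)}{r},
\]
and compare this with $\binom{n}{r}-\binom{n-s+1}{r}\approx (s-1)\binom{n-1}{r-1}$. Since $D\approx (s-1)\binom{n-2}{r-2}\cdot r$ for $n\ge 3r^2 s$, the right comparison is rather $nD/r$ versus $(s-1)\binom{n-1}{r-1}$. These agree to leading order, so this naive count is too weak. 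The cleaner route is to mimic the Huang--Loh--Sudakov and Huang--Li--Wang argument directly with vertex degrees replaced by the Ore condition.

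\textbf{Step 3: induction on $s$.} By induction, $\mathcal{H}_1,\dots,\mathcal{H}_{s-1}$ have an $(s-1)$-rainbow matching $M$. Indeed, the hypothesis for $s$ implies the one for $s-1$, since $D$ decreases in $s$ and $n>3r^2(s-1)$. Let $U=V(M)$, so $|U|=r(s-1)$, and let $\mathcal{H}_s$ need an edge avoiding $U$ with a new colour. The edges of $\mathcal{H}_s$ meeting $U$ number at most $r(s-1)\binom{n-1}{r-1}$. Edges of a forbidden colour are at most $s-1$ per colour class through properness, so at most $(s-1)\cdot n/r$ edges in total. So it suffices that $\mathcal{H}_s$ has more than $r(s-1)\binom{n-1}{r-1}+(s-1)n/r$ edges, which is false in general.

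This crude count therefore fails, and we must switch matchings as in Huang--Loh--Sudakov: take $s-1$ disjoint edges and a set $W$ of vertices of high degree. Use the Ore condition to show there is a vertex set $T$ of at most $r-1$ vertices of degree $\le D$. If some vertex $v\notin U$ has $\deg_{\mathcal{H}_s}(v)>D$, then the edges at $v$ avoiding $U$ number more than $D-\bigl(\binom{n-1}{r-1}-\binom{n-1-r(s-1)}{r-1}\bigr)$. This quantity can be negative, so instead we use averaging over $r$ vertices outside $U$ forming a non-edge, whose degree sum exceeds $rD$.

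\textbf{Main obstacle.} The main obstacle is this final step: the Ore condition only controls sums of degrees over non-edges, so we must either find $s$ rainbow edges using high-degree vertices or exploit that low-degree vertices span complete hypergraphs. I would try the swapping argument of Huang--Loh--Sudakov, with a case split on whether $r$ vertices outside $U$ form an edge.
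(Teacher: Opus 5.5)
\textbf{There is a genuine gap: the proposal does not reach a proof.} It stops at the decisive step, producing the $s$-th rainbow edge, which you flag yourself as the main obstacle. Your Step~3 count does not fail because of constants. It fails because nothing in your argument bounds the $\mathcal{H}_s$-degrees of the $r(s-1)$ vertices of $U=V(M)$, and your fallback ideas do not supply such a bound:
\begin{itemize}
\item An $r$-set outside $U$ with degree sum $>rD$ gives no edge avoiding $U$, since those degrees also count edges through $U$.
\item Step~1 only shows that the vertices of degree at most $D$ span a complete subhypergraph of $\mathcal{H}_i$. This justifies neither $|L_i|\le rs$ nor the later claim that there are at most $r-1$ such vertices.
\item Disjoint edges inside a single $\mathcal{H}_i$ do not give a rainbow matching across different $\mathcal{H}_j$.
\end{itemize}
A minor slip: $D=\sum_{j=2}^{s}\binom{n-j}{r-2}\approx(s-1)\binom{n-2}{r-2}$, without the factor $r$, and $D$ increases with $s$. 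Your conclusion that the hypothesis for $s$ implies the one for $s-1$ is nonetheless correct.

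Three ingredients are missing.

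(i) \emph{A global edge bound.} Summing the Ore condition over all non-edges and using Cauchy--Schwarz gives $|\mathcal{H}_i|\ge\frac{n}{r^2}\sigma_r(\mathcal{H}_i)>\frac{n}{r}D$ (Lemma~\ref{lem}). You are right that this stays below the threshold of Theorem~\ref{HLW}. It does not need to reach it; it only has to beat a degree sum over $V(M)$ once degrees are capped.

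(ii) \emph{A degree cap.} Take a counterexample with $s$ minimal, and suppose $\deg_{\mathcal{H}_i}(v)>r(s-1)\binom{n-2}{r-2}+s-1$. Delete $v$ from every $\mathcal{H}_j$ with $j\neq i$. Each degree drops by at most $\binom{n-2}{r-2}$, so the Ore degree stays above $r\bigl(\binom{n-2}{r-1}-\binom{n-s}{r-1}\bigr)$, which is exactly the hypothesis for $(n-1,s-1)$. By minimality you get an $(s-1)$-rainbow matching avoiding $v$. It extends by an edge at $v$: at most $r(s-1)\binom{n-2}{r-2}$ edges at $v$ meet the matching, and by properness at most $s-1$ edges at $v$ carry a used colour.

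(iii) \emph{A split on the $s$-th largest degree.}
\begin{itemize}
\item Suppose some $\mathcal{H}_i$ has at most $s-1$ vertices of degree $>2(s-1)\binom{n-2}{r-2}+s-1$. Run your Step~3 with this $i$ in place of $s$. The $\mathcal{H}_i$-degrees over $V(M)$ sum to at most $(s-1)^2\bigl[(3r-2)\binom{n-2}{r-2}+r\bigr]$, while $|\mathcal{H}_i|>(3r-1)(s-1)^2\binom{n-2}{r-2}$ for $n>3r^2s$. So more than $(s-1)\frac{n-r(s-1)}{r}$ edges avoid $V(M)$, and your colour-class count finishes.
\item Otherwise every $\mathcal{H}_i$ has $s$ such vertices. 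Choose distinct $v_1,\dots,v_s$ with $v_i$ of high degree in $\mathcal{H}_i$. Apply Theorem~\ref{HLW} to the $(r-1)$-uniform links of $v_i$ in $\mathcal{H}_i$ avoiding $\{v_1,\dots,v_s\}$, with inherited colours. These links live on $n-s$ vertices and have more than $(s-1)\binom{n-2}{r-2}+s-1>\binom{n-s}{r-1}-\binom{n-2s+1}{r-1}$ edges. Lifting the resulting rainbow matching back through the $v_i$ gives the $s$-rainbow matching.
\end{itemize}
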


The notion of an intersecting hypergraph can be generalized to pairs of hypergraphs in the following way. Two hypergraphs ${\cal A}$ and ${\cal B}$  are {\it cross-intersecting} if for every two edges $A \in {\cal A}$  and $B \in {\cal B}$, their intersection $A\cap B$ is non-empty. Pyber~\cite{Pyber} showed that for $n\geq 2r$, if ${\cal A}, {\cal B} \subseteq \binom{[n]}{r}$ are cross-intersecting, then $|{\cal A}||{\cal B}|\leq {n-1 \choose r-1}^2$. Huang and Zhao~\cite{huang} proved the following degree version of this result.

\begin{theorem}[Huang and Zhao~\cite{huang}]\label{huang}
Let $n\geq 2r+1$. If ${\cal A}, {\cal B} \subseteq \binom{[n]}{r}$ are cross-intersecting, then $\delta({\cal A})\delta({\cal B})\leq {n-2 \choose r-2}^2$.
\end{theorem}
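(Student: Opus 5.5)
This is the cross-intersecting degree theorem of Huang and Zhao, which the paper quotes. Our plan follows their spectral route. The combinatorial shifting arguments used for Theorem~\ref{hoangzho} do not obviously control the product of two minimum degrees.

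\textbf{Setup.} Let $K$ be the adjacency matrix of the Kneser graph $KG(n,r)$ on $\binom{[n]}{r}$, with $S\sim T$ iff $S\cap T=\emptyset$. Cross-intersection of ${\cal A}$ and ${\cal B}$ says exactly
\[
\mathbf 1_{\cal A}^{T}K\,\mathbf 1_{\cal B}=0 .
\]
The eigenvalues of $K$ are $\lambda_j=(-1)^j\binom{n-r-j}{r-j}$, $0\le j\le r$, with eigenspaces $V_0,\dots,V_r$. The space $V_0\oplus V_1$ is the row space of the $n\times\binom nr$ inclusion matrix $W$. For $n\ge 2r+1$ we have $|\lambda_j|<|\lambda_1|$ for all $j\ge 2$.

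\textbf{Step 1: decompose.} Write $\mathbf 1_{\cal A}=a_0+a_1+a'$ and $\mathbf 1_{\cal B}=b_0+b_1+b'$, where $a'$ and $b'$ lie in $V_2\oplus\cdots\oplus V_r$. The components $a_0,a_1$ are determined by the degree vector $d_{\cal A}=W\mathbf 1_{\cal A}$, and similarly for ${\cal B}$. Explicitly, $a_0+a_1=W^{T}(WW^{T})^{-1}d_{\cal A}$, where $WW^{T}=\binom{n-2}{r-2}J+\binom{n-2}{r-1}I$.

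\textbf{Step 2: spectral identity.} The identity $0=\sum_j\lambda_j\langle a_j,b_j\rangle$ gives
\[
\lambda_0\langle a_0,b_0\rangle
=|\lambda_1|\langle a_1,b_1\rangle-\sum_{j\ge2}\lambda_j\langle a_j,b_j\rangle
\le |\lambda_1|\big(\langle a_1,b_1\rangle+\|a'\|\,\|b'\|\big).
\]
The left side is $\lambda_0|{\cal A}||{\cal B}|/\binom nr$.

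\textbf{Step 3: the minimum-degree hypothesis.} Write $d_{\cal A}=\delta({\cal A})\mathbf 1+e_{\cal A}$ with $e_{\cal A}\ge 0$, and similarly for ${\cal B}$. By Step 1, $a_1$ is (up to the explicit scalar $1/\binom{n-2}{r-1}$) the image under $W^{T}$ of the mean-zero part of $e_{\cal A}$. So $\|a_1\|^2$ and $\langle a_1,b_1\rangle$ can be written in terms of $e_{\cal A},e_{\cal B}$, while $|{\cal A}|=\frac1r(n\delta({\cal A})+\sum e_{\cal A})$. Also $\|a'\|^2=|{\cal A}|-\|a_0\|^2-\|a_1\|^2$. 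With Cauchy--Schwarz, this turns Step 2 into an inequality involving only $\delta({\cal A}),\delta({\cal B}),|{\cal A}|,|{\cal B}|$ and $\|e_{\cal A}\|,\|e_{\cal B}\|$.

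\textbf{Step 4: conclude.} The target is $\sqrt{\delta({\cal A})\delta({\cal B})}\le\binom{n-2}{r-2}$. The extremal case is two identical $1$-stars: there $\delta=\binom{n-2}{r-2}$, and the excess $e$ is concentrated on one vertex.

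\textbf{Main obstacle.} The difficulty is Step 3. Nonnegativity of $e_{\cal A}$ is essential, since a pure norm/Hoffman bound only yields $|{\cal A}||{\cal B}|\le\binom{n-1}{r-1}^2$, which is Pyber's theorem and too weak for degrees. I would first try to show that the map $(e_{\cal A},e_{\cal B})\mapsto{}$(right side minus left side of Step 2) is minimized, over nonnegative vectors with fixed sums, at vectors supported on a single vertex. If that convexity argument fails, I would fall back on Huang--Zhao's idea of a geometric lemma: a nonnegative vector in $\mathbb R^n$ with all coordinates at least $\delta$ can be written as $\delta\mathbf 1$ plus a cone combination, handled coordinatewise. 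Either way, the strict inequality $|\lambda_2|<|\lambda_1|$ is what uses $n\ge 2r+1$.
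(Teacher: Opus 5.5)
The paper does not prove this theorem; it quotes it from Huang and Zhao, so your outline has to stand on its own. As written it is not a proof. Steps 3 and 4 carry the whole difficulty, and they are only announced (``I would first try \dots; if that fails, I would fall back on \dots''). Moreover, Step 2 already discards the information those steps would need.

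\textbf{Step 2 loses the spectral gap.} In Step 2 you bound $-\sum_{j\ge 2}\lambda_j\langle a_j,b_j\rangle$ by $|\lambda_1|\,\|a'\|\,\|b'\|$. This throws away the gap between $|\lambda_1|=\binom{n-r-1}{r-1}$ and $\max_{j\ge 2}|\lambda_j|=|\lambda_2|=\binom{n-r-2}{r-2}$, and nothing can be recovered afterwards. Take $\mathcal A=\mathcal B$ intersecting, which is a special case of the theorem. Your inequality becomes $\lambda_0\|a_0\|^2\le|\lambda_1|(|\mathcal A|-\|a_0\|^2)$, which is just $|\mathcal A|\le\binom{n-1}{r-1}$ and no longer involves the degree vector. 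The data of a hypothetical regular family with $\binom{n-2}{r-2}<\delta\le\frac rn\binom{n-1}{r-1}$ satisfies every relation you have written. The inequality you must keep is
\[
\lambda_0\langle a_0,b_0\rangle\le|\lambda_1|\langle a_1,b_1\rangle+|\lambda_2|\,\|a'\|\,\|b'\| .
\]
For regular families ($a_1=b_1=0$) this, together with AM--GM, gives $\delta(\mathcal A)\delta(\mathcal B)\le\binom{n-2}{r-2}^2$ exactly when $n\ge 2r+1$. At $n=2r+1$ there is equality, so there is no slack at the threshold and every later estimate must be sharp.

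\textbf{Step 3 is not justified even after this correction.} What you need is an upper bound on the right-hand side over all admissible excess vectors $e_{\mathcal A},e_{\mathcal B}\ge 0$, i.e.\ its maximum, not the minimum of ``right minus left''. That maximum is not in general attained at one-vertex vectors. Increasing $\|a_1\|$ raises the $|\lambda_1|$-term but lowers $\|a'\|=(|\mathcal A|-\|a_0\|^2-\|a_1\|^2)^{1/2}$. The two families are also coupled through $\langle a_1,b_1\rangle$ and $\|a'\|\,\|b'\|$. For instance, if $\mathcal B$ is regular then $b_1=0$, and the right-hand side is largest when $a_1=0$, i.e.\ when $e_{\mathcal A}$ is spread uniformly.

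The one-vertex fact by itself is easy. Writing $\tilde e$ for the mean-zero part of $e_{\mathcal A}\ge 0$, one has $\|\tilde e\|^2\le(1-\frac1n)\bigl(\sum_i (e_{\mathcal A})_i\bigr)^2$, so
\[
\|a_1\|^2\le\frac{n(n-1)}{\binom{n-2}{r-1}}\Bigl(\frac{r|\mathcal A|}{n}-\delta(\mathcal A)\Bigr)^2 .
\]
What is missing is the genuinely two-family optimization that turns this bound and the corrected Step 2 into $\delta(\mathcal A)\delta(\mathcal B)\le\binom{n-2}{r-2}^2$ for every $n\ge 2r+1$. Your fallback, as stated, is just the decomposition $d_{\mathcal A}=\delta(\mathcal A)\mathbf 1+e_{\mathcal A}$ again and supplies no new inequality. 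Until that optimization is carried out, the proposal is a plan for the spectral method rather than a proof.
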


 Our final result is an Ore-degree version of Theorem~\ref{huang}.

\begin{theorem}\label{main2} Let $n\geq 4r^2$. If ${\cal A},{\cal B} \subseteq \binom{[n]}{r}$ are cross-intersecting, then $\sigma_r({\cal A})\sigma_r({\cal B})\leq r^2{n-2 \choose r-2}^2$. 
\end{theorem}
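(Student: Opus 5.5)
The plan is to turn the cross-intersecting condition into an upper bound on each Ore-degree in terms of the edge count, and then multiply and apply Pyber's product bound $|{\cal A}||{\cal B}|\le {n-1\choose r-1}^2$. Write $a=\sigma_r({\cal A})$, $b=\sigma_r({\cal B})$ and $C={n-2\choose r-2}$.

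If ${\cal A}=\emptyset$, every $r$-set is a non-edge of ${\cal A}$ with degree $0$, so $a=0$ and there is nothing to prove; the same holds for ${\cal B}$. So assume both families are nonempty and fix $B_0\in{\cal B}$. Every $A\in{\cal A}$ meets $B_0$, so every $r$-set $S\subseteq [n]\setminus B_0$ is a non-edge of ${\cal A}$ (here $n\ge 2r$ is used). Hence $a\le \deg_{\cal A}(S)$ for every such $S$. Take $S$ to be the $r$ vertices of smallest ${\cal A}$-degree outside $B_0$. Then
\[
a\le \frac{r}{n-r}\sum_{v\notin B_0}\deg_{\cal A}(v)=\frac{r}{n-r}\sum_{A\in{\cal A}}|A\setminus B_0|\le \frac{r(r-1)}{n-r}\,|{\cal A}|,
\]
and symmetrically $b\le \frac{r(r-1)}{n-r}|{\cal B}|$. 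Multiplying and using Pyber's theorem gives
\[
ab\le \frac{r^2(r-1)^2}{(n-r)^2}{n-1\choose r-1}^2=r^2C^2\Big(\frac{n-1}{n-r}\Big)^2 .
\]
This is the right order of magnitude, but it is off by the factor $\big(\frac{n-1}{n-r}\big)^2$.

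The main obstacle is to remove this factor. The star example shows where the loss comes from: for a $1$-star with centre $x\in B_0$, the true Ore-degree is $rC$, but the averaging above gives $rC\frac{n-1}{n-r}$. The loss is caused by edges that meet $B_0$ in exactly one vertex, which dominate when ${\cal A}$ is star-like.

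My first attempt will be a sharper count. Let $T_{\cal A}$ be the (few) vertices of very large ${\cal A}$-degree, say degree above $\epsilon{n-1\choose r-1}$; there are $O(r/\epsilon)$ of them. Choose $S$ avoiding $B_0\cup T_{\cal A}$, and average only over the remaining vertices. The averaging then loses only $O(r^2)$ out of $n$ vertices. The edges counted there satisfy
\[
\sum_{A}|A\setminus(B_0\cup T_{\cal A})|\le (r-1)|{\cal A}| - \sum_A |A\cap T_{\cal A}\setminus B_0|,
\]
which gains back exactly when a heavy vertex exists outside $B_0$. I will then split into two cases.

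\emph{Case 1: both families have a vertex of degree $\ge\epsilon{n-1\choose r-1}$.} Cross-intersection forces essentially all edges of ${\cal A}$ and of ${\cal B}$ through a common vertex $x$. In this case I compare directly with the star: pick $S$ avoiding $x$, and bound $\deg_{\cal A}(S)\le rC$ term by term, using that every edge through $v\ne x$ also has to pass through $x$ or meet a bounded set.

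\emph{Case 2: one family, say ${\cal A}$, has all degrees below $\epsilon{n-1\choose r-1}$.} Then by a Hilton--Milner/Frankl-type stability argument, the cross-intersecting condition forces $|{\cal B}|$ (or $|{\cal A}|$) to be much smaller than ${n-1\choose r-1}$. This smaller size should absorb the factor $\big(\frac{n-1}{n-r}\big)^2\le 1+O(1/r)$, given $n\ge 4r^2$.

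Checking that the slack from $n\ge 4r^2$ suffices in Case 2 is the step I expect to be hardest.
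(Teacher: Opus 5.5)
Your averaging bound $\sigma_r({\cal A})\le \frac{r(r-1)}{n-r}|{\cal A}|$ is correct. You also correctly see that combining it with Pyber's bound loses the factor $\big(\frac{n-1}{n-r}\big)^2$. But the proposal stops where the real content lies, so there is a genuine gap.

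\textbf{The gap is Case 2.} You assert that when one family has no heavy vertex, ``a Hilton--Milner/Frankl-type stability argument'' pushes $|{\cal A}||{\cal B}|$ below $\big(\frac{n-r}{n-1}\big)^2\binom{n-1}{r-1}^2$, but no argument is given, and the step is not routine. If ${\cal A}$ lies in a star, $|{\cal A}|$ is a single vertex degree and is small, so that subcase is easy. What remains needs a product bound for cross-intersecting families with $\bigcap_{F\in{\cal A}\cup{\cal B}}F=\emptyset$. This is exactly the Frankl--Kupavskii / Wu--Xiong theorem (Theorem~\ref{non-triv-cross}) that the paper imports. Hilton--Milner is about a single intersecting family and does not apply directly.

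Your Case 1 is only a heuristic (``essentially all edges'', ``meet a bounded set''). Your ``sharper count'' with $T_{\cal A}$ plays no role in either case.

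\textbf{How the paper closes it.} Split not by a degree threshold but by whether all edges of ${\cal A}\cup{\cal B}$ share a vertex $x$.

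If they do, both families lie in the star at $x$. Monotonicity of $\sigma_r$ then gives $\sigma_r({\cal A}),\sigma_r({\cal B})\le r\binom{n-2}{r-2}$; the paper cites Theorem~\ref{erdosko}, but monotonicity alone suffices. Your Case 1 becomes exact with the threshold $\deg_{\cal A}(x)>r\binom{n-2}{r-2}$: this forces every edge of ${\cal B}$ to contain $x$. Heavy vertices of ${\cal A}$ and of ${\cal B}$ must then coincide.

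If they share no vertex, Theorem~\ref{non-triv-cross} gives $|{\cal A}||{\cal B}|<2r\binom{n-1}{r-1}\binom{n-2}{r-2}$. Combined with your own averaging bound this yields
\[
\sigma_r({\cal A})\sigma_r({\cal B})\le \frac{r^2(r-1)^2}{(n-r)^2}\cdot 2r\binom{n-1}{r-1}\binom{n-2}{r-2}=\frac{2r^3(r-1)(n-1)}{(n-r)^2}\binom{n-2}{r-2}^2\le r^2\binom{n-2}{r-2}^2 .
\]
The last step holds because $(n-r)^2\ge 2r(r-1)(n-1)$ for $n\ge 4r^2$. The non-trivial product theorem beats Pyber's bound by a factor of order $r^2/n$, so the $\big(\frac{n-1}{n-r}\big)^2$ loss is absorbed easily. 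Without that theorem, or a proof of an equivalent statement, your plan does not close.
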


Theorem~\ref{main2} is sharp as evidenced by taking $\mathcal{A},\mathcal{B}$ to be copies of the same $1$-star.

\medskip

\noindent
{\bf Remark.}
While preparing our manuscript, independently, two papers were posted on arXiv which provide different (and longer) proofs of a slightly weaker version of Theorem~\ref{erdosko}.
In particular,
Huang and Rao~\cite{huangrao}, extending work of Frankl and Wang~\cite{franklwang} proved that if $\cal H$ is an intersecting $r$-uniform hypergraph on $n$ vertices, then the $(2r+1)$-st largest degree of $\cal H$ is at most $\binom{n-1}{r-2}$. As every set of $2r$ vertices spans a non-edge $r$-set in an intersecting hypergraph, this implies that for $n\ge 4r$, we have 
$\sigma_r({\cal H})\leq r{n-2 \choose r-2}$, i.e., a slightly weaker result than our Theorem~\ref{erdosko}.

\section{Tools}

In this section we list several lemmas and theorems that we use in our proofs. The Ore-degree $\sigma_r$ is monotone as adding an edge to an $r$-uniform hypergraph cannot decrease $\deg(S)$ for any $r$-set $S$.
Thus, we will use the following observation implicitly.

\begin{observation}
For an $r$-uniform hypergraph $\mathcal{H}$, we have
    $\sigma_r(\mathcal{H}') \leq \sigma_r(\mathcal{H})$ when $\mathcal{H}' \subseteq \mathcal{H}$.
\end{observation}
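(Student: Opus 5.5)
The plan is a direct argument from the definitions; no earlier result is needed. Fix an $r$-set $S$ that is a non-edge of $\mathcal{H}'$. We will show that $\deg_{\mathcal{H}'}(S) \geq \sigma_r(\mathcal{H}')$ and $\deg_{\mathcal{H}'}(S) \leq \deg_{\mathcal{H}}(S)$, and then split according to whether $S$ is also a non-edge of $\mathcal{H}$.

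\textbf{Step 1 (degrees only grow).} Since every edge of $\mathcal{H}'$ is an edge of $\mathcal{H}$, we have $\deg_{\mathcal{H}'}(v)\le \deg_{\mathcal{H}}(v)$ for every vertex $v$. Summing over $v\in S$ gives $\deg_{\mathcal{H}'}(S)\le \deg_{\mathcal{H}}(S)$ for every $r$-set $S$.

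\textbf{Step 2 (non-edges only shrink).} Every non-edge of $\mathcal{H}$ is a non-edge of $\mathcal{H}'$. So $\sigma_r(\mathcal{H})$ is a minimum of $\deg_{\mathcal{H}}(S)$ over a subfamily of the $r$-sets used to define $\sigma_r(\mathcal{H}')$. For any non-edge $S$ of $\mathcal{H}$, Step 1 gives
\[
\sigma_r(\mathcal{H}')\le \deg_{\mathcal{H}'}(S)\le \deg_{\mathcal{H}}(S).
\]
Taking the minimum over all non-edges $S$ of $\mathcal{H}$ yields $\sigma_r(\mathcal{H}')\le\sigma_r(\mathcal{H})$.

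\textbf{Degenerate case.} The only real issue is the convention when $\mathcal{H}$ has no non-edges, that is, $\mathcal{H}=\binom{[n]}{r}$. In that case the minimum is over an empty set, and we interpret $\sigma_r(\mathcal{H})=+\infty$, so the inequality holds trivially. With this convention, the two steps above complete the proof.
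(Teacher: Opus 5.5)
Your argument is correct and is essentially the paper's one-line justification (adding edges cannot decrease $\deg(S)$ for any $r$-set $S$), made explicit by also noting that the family of non-edges can only shrink and by adopting the convention $\sigma_r(\binom{[n]}{r})=+\infty$. Your opening sentence announces a case split over non-edges of $\mathcal{H}'$ that you never carry out, but it isn't needed: the argument you actually give, minimizing over non-edges of $\mathcal{H}$, is complete.
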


We use the following standard inequalities repeatedly:
\begin{equation}\label{handy-ineq}
c \binom{a-1}{b-1}> \binom{a}{b} - \binom{a-c}{b} > c \binom{a-c}{b-1},    
\end{equation}
which can be seen by estimating the number of $b$-subsets of an $a$-set that intersect a fixed $c$-subset in at least one element. 

If the  degrees of the vertices  in a hypergraph $\mathcal{H}$ are all equal, then we call $\mathcal{H}$ a \emph{regular} hypergraph.
We prove the following lemma that establishes a tight lower bound on the size of an $n$-vertex $r$-uniform hypergraph ${\cal  H}$ in terms of $\sigma_r({\cal H})$.

\begin{lemma}\label{lem}
Let ${\cal  H} $ be an $r$-uniform hypergraph on $n$ vertices. Then  $|{\cal H}|\geq \frac{n}{r^2}\cdot \sigma_r({\cal H})$, with equality only if $\mathcal{H}$ is regular.
\end{lemma}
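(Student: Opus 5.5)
The idea is to double count $\sum_{S\notin\mathcal H}\deg(S)$, the sum of $\deg(S)$ over all non-edge $r$-sets $S$, and then apply Cauchy--Schwarz to the degree sequence. Write $m=|\mathcal H|$, $N=\binom{n-1}{r-1}$ and $d_v=\deg(v)$. Recall that $\sum_v d_v = rm$ and $\binom{n}{r}=\frac{n}{r}N$.

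If $\mathcal H=\binom{[n]}{r}$ there are no non-edges and $\sigma_r$ is not a genuine minimum, so that case is excluded or handled by convention. From now on assume $\mathcal H$ has at least one non-edge, so $m<\binom nr$. Equivalently $rm<nN$, i.e. the average of $N-d_v$ is positive.

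\textbf{Step 1 (double counting).} A vertex $v$ lies in exactly $N-d_v$ non-edge $r$-sets. Hence
\[
\sum_{S\notin\mathcal H}\deg(S)=\sum_{v}d_v\,(N-d_v)=rmN-\sum_v d_v^2 .
\]
Each of the $\binom nr-m$ non-edges $S$ satisfies $\deg(S)\ge\sigma_r(\mathcal H)$, so the left-hand side is at least $\sigma_r(\mathcal H)\bigl(\frac nr N-m\bigr)=\sigma_r(\mathcal H)\cdot\frac nr\bigl(N-\frac{rm}{n}\bigr)$.

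\textbf{Step 2 (Cauchy--Schwarz).} We have $\sum_v d_v^2\ge (rm)^2/n$. Combining with Step 1 gives
\[
rm\Bigl(N-\frac{rm}{n}\Bigr)\;\ge\; rmN-\sum_v d_v^2\;\ge\;\sigma_r(\mathcal H)\,\frac nr\Bigl(N-\frac{rm}{n}\Bigr).
\]
Since $N-\frac{rm}{n}>0$, we may divide by it and obtain $rm\ge \frac nr\sigma_r(\mathcal H)$, i.e. $|\mathcal H|\ge \frac{n}{r^2}\sigma_r(\mathcal H)$.

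\textbf{Step 3 (equality).} If equality holds, then the Cauchy--Schwarz step must be tight, because the factor we divided by is strictly positive. Tightness in Cauchy--Schwarz forces all $d_v$ to be equal, so $\mathcal H$ is regular.

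The only real subtlety is noticing that averaging over all $r$-sets fails, since edges may have small degree sum. Summing only over non-edges produces the factor $N-d_v$, and after Cauchy--Schwarz the same positive factor $N-rm/n$ appears on both sides and cancels.
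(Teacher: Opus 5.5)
Your proof is correct and follows the paper's argument. You double count $\sum_{S\notin\mathcal H}\deg(S)$ as $\sum_v d_v(N-d_v)$ and then apply Cauchy--Schwarz. Dividing by the positive factor $N-rm/n$ is the same step as the paper's factorization $\bigl(\frac{r^2}{n}|\mathcal H|-\sigma_r(\mathcal H)\bigr)\bigl(|\mathcal H|-\binom nr\bigr)\le 0$, and your explicit assumption $|\mathcal H|<\binom nr$ spells out what the paper uses implicitly.
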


\begin{proof}
For each of the $\binom{n}{r}-|{\cal H}|$ non-edge $r$-sets $S \notin {\cal H}$ we have $\sum_{v \in S} \deg(v) \geq \sigma_r({\cal H}).$ Summing over all non-edge $r$-sets, we obtain
   \[
   \sum_{S \in  \binom{[n]}{r} \setminus {\cal H}} \sum_{v \in S} \deg(v) \geq \sigma_r({\cal H})  \left(\binom{n}{r}-|{\cal H}|\right).
   \]
Since the number of non-edge $r$-sets containing a vertex $v$ is $\binom{n-1}{r-1}-\deg(v)$, the left-hand side counts each vertex degree $\deg(v)$ exactly $\binom{n-1}{r-1}-\deg(v)$ times. Thus,
   \[
   \sum_{v \in V({\cal H})} \deg(v) \left(\binom{n-1}{r-1}-\deg(v)\right) \geq \sigma_r({\cal H}) \left(\binom{n}{r}-|{\cal H}|\right).
   \]
Now, $\sum_{v \in V(\mathcal{H})} \deg(v)= r|{\cal H}|$, hence 
   \[
   r |{\cal H}| \binom{n-1}{r-1}-\sum_{v\in V(\mathcal{H})} \deg(v)^2 \geq \sigma_r({\cal H}) \left(\binom{n}{r}-|{\cal H}|\right).
   \]
Applying the Cauchy-Schwartz inequality gives $$\sum \deg(v)^2 \geq \frac{1}{n}\left(\sum \deg(v)\right)^2=\frac{r^2 |{\cal H}|^2}{n} .$$ Therefore,
  
  \[
  r|{\cal H}| \binom{n-1}{r-1}-\frac{r^2 |{\cal H}|^2}{n} \geq \sigma_r({\cal H})  \left(\binom{n}{r}-|{\cal H}|\right).
  \]
Equivalently,
\[
0\geq \frac{r^2}{n}|{\cal H}|^2-\left(\sigma_r({\cal H})+r\binom{n-1}{r-1}\right)|{\cal H}|+\sigma_r({\cal H})\binom{n}{r}
 =\left(\frac{r^2}{n}|{\cal H}|-\sigma_r(\mathcal{H})\right)\left(|{\cal H}|-\binom{n}{r}\right).
   \]
Thus, $|{\cal H}|\geq \frac{n}{r^2}\cdot \sigma_r({\cal H})$. Clearly, equality holds only if all vertex degrees of ${\cal H}$ are equal.
\end{proof}


We use the following bound on the size of regular hypergraphs.

\begin{theorem}[Ihringer and Kupavskii~\cite{ilku}]\label{reg-hg}
    Let $n\geq r$. If $\mathcal{H} \subseteq \binom{[n]}{r}$ is regular and intersecting, then
    \[
    |\mathcal{H}| \leq \frac{1}{1+ \frac{(n-r)(n-r-1)(n-r-2)}{r(r-1)(r-2)}} \binom{n}{r} .
    \]
\end{theorem}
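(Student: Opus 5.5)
This is a Hoffman-type ratio bound on the Kneser graph. Regularity removes the first nontrivial eigenspace of the Johnson scheme, so the least eigenvalue that can contribute is the one on the third eigenspace rather than the first.

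\textbf{Setup.} Let $N=\binom{n}{r}$, let $K=K(n,r)$ be the Kneser graph on $\binom{[n]}{r}$, and let $A$ be its adjacency matrix. We use the standard orthogonal decomposition $\mathbb{R}^{\binom{[n]}{r}}=V_0\oplus V_1\oplus\dots\oplus V_r$ from the Johnson scheme. Here $V_0$ is spanned by the all-ones vector $\mathbf 1$, and $V_0\oplus\dots\oplus V_j$ is spanned by the vectors $\mathbf 1_{\{S: T\subseteq S\}}$ for $|T|\le j$. The matrix $A$ acts on $V_j$ as the scalar $\lambda_j=(-1)^j\binom{n-r-j}{r-j}$. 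We assume $r\ge 3$ and $n\ge 2r$; for $r<3$ the right-hand side of the bound is read as $0$, and the statement is degenerate.

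\textbf{Step 1: regularity kills $V_1$.} Let $f=\mathbf 1_{\mathcal H}$. For each vertex $v$, $\langle f,\mathbf 1_{\{S:v\in S\}}\rangle=\deg(v)$. If $\mathcal H$ is regular with common degree $d$, then $rN d = r|\mathcal H|\binom{n}{r}\cdot\frac{1}{N}\cdot\ldots$; more simply, $d=r|\mathcal H|/n$. Hence $f-\frac{|\mathcal H|}{N}\mathbf 1$ is orthogonal to every vector $\mathbf 1_{\{S:v\in S\}}-\frac{r}{n}\mathbf 1$. These vectors span $V_1$, so $f\in V_0\oplus V_2\oplus\dots\oplus V_r$. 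Write $f=\frac{|\mathcal H|}{N}\mathbf 1+\sum_{j\ge2}f_j$, where
\[
\sum_{j\ge 2}\|f_j\|^2=|\mathcal H|-\frac{|\mathcal H|^2}{N}.
\]

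\textbf{Step 2: the quadratic form.} Since $\mathcal H$ is intersecting, it is an independent set in $K$, so $f^{T}Af=0$. Expanding in the eigenbasis gives
\[
0=\lambda_0\frac{|\mathcal H|^2}{N}+\sum_{j\ge2}\lambda_j\|f_j\|^2\ \ge\ \lambda_0\frac{|\mathcal H|^2}{N}+\mu\Big(|\mathcal H|-\frac{|\mathcal H|^2}{N}\Big),
\]
where $\mu=\min_{j\ge2}\lambda_j$. Rearranging yields
\[
|\mathcal H|\le \frac{N}{1+\lambda_0/|\mu|}.
\]

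\textbf{Step 3: identify $\mu$.} The negative eigenvalues occur for odd $j$, with $|\lambda_j|=\binom{n-r-j}{r-j}$. This quantity is nonincreasing in $j$, since both the top and bottom of the binomial coefficient drop by one. Therefore $\mu=\lambda_3=-\binom{n-r-3}{r-3}$. A direct computation then gives
\[
\frac{\lambda_0}{|\lambda_3|}=\frac{\binom{n-r}{r}}{\binom{n-r-3}{r-3}}=\frac{(n-r)(n-r-1)(n-r-2)}{r(r-1)(r-2)},
\]
which is exactly the claimed bound.

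\textbf{Main obstacle.} The step requiring care is Step 1. One must justify the description of $V_0\oplus V_1$ as the span of the vertex-star indicator vectors, and that orthogonality to all of these (after subtracting the mean) forces the $V_1$-component to vanish. This is standard Johnson-scheme theory, which I would quote rather than rederive. The small cases $n<2r$, where $K$ has no edges and the eigenvalue formulas degenerate, I would handle separately or exclude.
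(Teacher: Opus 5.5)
The paper does not prove this statement; it quotes it from Ihringer and Kupavskii~\cite{ilku}. Your argument is a correct, self-contained proof for $r\ge 3$ and $n\ge 2r$. It is the Hoffman ratio bound on the Kneser graph, with one twist: regularity makes $\mathbf 1_{\mathcal H}-\frac{|\mathcal H|}{N}\mathbf 1$ orthogonal to $V_0\oplus V_1$, so the negative eigenvalue that matters is $\lambda_3=-\binom{n-r-3}{r-3}$ rather than $\lambda_1$. This is the standard eigenvalue derivation of this bound. Steps 1--3 check out, including:
\begin{itemize}
\item the orthogonality computation,
\item the norm identity $\sum_{j\ge2}\|f_j\|^2=|\mathcal H|-|\mathcal H|^2/N$,
\item the identity $\binom{n-r}{r}/\binom{n-r-3}{r-3}=\frac{(n-r)(n-r-1)(n-r-2)}{r(r-1)(r-2)}$.
\end{itemize}

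\textbf{The range $n<2r$.} The range $r+3\le n<2r$ cannot be ``handled separately,'' because the statement as printed (with $n\ge r$) is false there. For $r=4$ and $n=7$, the family $\binom{[7]}{4}$ is regular and intersecting with $35$ edges, while the right-hand side is $35/(1+\frac{6}{24})=28$. So the correct hypothesis is $n\ge 2r$, together with $r\ge 3$ so that the formula makes sense. This is all the paper needs, since Lemma~\ref{reg-lemma} applies the theorem only with $n\ge 2r+1$.

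The assumption $n\ge 2r$ is also used in your Step 3. The monotonicity $\binom{m-1}{k-1}=\frac{k}{m}\binom{m}{k}\le\binom{m}{k}$ needs $k\le m$, which here means $n\ge 2r$, so state it at that point.

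\textbf{Smaller fixes.} In Step 1, replace the garbled line with the double count $nd=r|\mathcal H|$.

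For $r\le 2$ and $n\ge 2r$, your own argument shows $\mathcal H=\emptyset$. Either there is no $V_j$ with $j\ge 2$ (when $r=1$), or $\mu=\lambda_2=1>0$ (when $r=2$). Say this explicitly; it is the precise meaning of ``reading the right-hand side as $0$.''
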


We establish the following simple corollary.

\begin{lemma}\label{reg-lemma}
    For $n\geq 2r+1$, there is no regular intersecting hypergraph $\mathcal{H} \subseteq \binom{[n]}{r}$ whose degrees are $\binom{n-2}{r-2}$.
\end{lemma}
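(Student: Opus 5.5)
The plan is to combine Theorem~\ref{reg-hg} with a double count of edges via degrees. If $\mathcal{H}$ is regular with every degree equal to $\binom{n-2}{r-2}$, then
\[
r|\mathcal{H}| = \sum_v \deg(v) = n\binom{n-2}{r-2}, \qquad\text{so}\qquad |\mathcal{H}| = \frac{n}{r}\binom{n-2}{r-2}.
\]
Writing $\binom{n}{r} = \frac{n(n-1)}{r(r-1)}\binom{n-2}{r-2}$, the ratio is
\[
\frac{|\mathcal{H}|}{\binom{n}{r}} = \frac{r-1}{n-1}.
\]
Theorem~\ref{reg-hg} gives
\[
\frac{|\mathcal{H}|}{\binom{n}{r}} \le \frac{r(r-1)(r-2)}{r(r-1)(r-2)+(n-r)(n-r-1)(n-r-2)}.
\]
So it suffices to show that for $n\ge 2r+1$ the following inequality holds, which yields a contradiction:
\[
\frac{r-1}{n-1} > \frac{r(r-1)(r-2)}{r(r-1)(r-2)+(n-r)(n-r-1)(n-r-2)}.
\]

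Cancel the factor $r-1$; this requires $r\ge 2$. The case $r=1$ is trivial, since a degree of $\binom{n-2}{-1}=0$ means $\mathcal{H}$ is empty. The empty family is vacuously regular and intersecting, so in every case we should assume $\mathcal{H}$ is nonempty. The inequality then becomes
\[
r(r-1)(r-2) + (n-r)(n-r-1)(n-r-2) > (n-1)\,r(r-2),
\]
that is,
\[
(n-r)(n-r-1)(n-r-2) > r(r-2)(n-r).
\]
For $r\ge 3$, divide by $n-r>0$ to get $(n-r-1)(n-r-2) > r(r-2)$. With $n\ge 2r+1$ we have $n-r-1\ge r$ and $n-r-2\ge r-1$, so the left side is at least $r(r-1) > r(r-2)$. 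For $r=2$ the right side is $0$ and the left side is positive when $n\ge 5$.

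The main obstacle is just bookkeeping. The cubic term in Theorem~\ref{reg-hg} degenerates for small $r$: when $r=2$ the bound reads $|\mathcal{H}|\le 0$, which still gives the contradiction directly because $\mathcal{H}$ is nonempty. Beyond that, only the algebraic simplification needs to be checked carefully; I expect no real difficulty.
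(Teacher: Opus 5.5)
Your proof is correct and is essentially the paper's argument: compute $|\mathcal{H}|=\frac{n}{r}\binom{n-2}{r-2}$, plug it into the Ihringer--Kupavskii bound (Theorem~\ref{reg-hg}), and reduce to $(n-r-1)(n-r-2)\le r(r-2)$, which fails for $n\ge 2r+1$. Your extra bookkeeping for $r\le 2$, where the factor $r(r-1)(r-2)$ in Theorem~\ref{reg-hg} vanishes, goes beyond the paper, which tacitly works with $r\ge 3$; for $r=1$ the statement does need the nonemptiness caveat you mention.
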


\begin{proof}
    Suppose, to the contrary, that there is such a hypergraph $\mathcal{H}$. Then summing up degrees and applying Theorem~\ref{reg-hg} we have
    \[
    \frac{n}{r}\binom{n-2}{r-2} \leq |\mathcal{H}|\leq \frac{1}{1+ \frac{(n-r)(n-r-1)(n-r-2)}{r(r-1)(r-2)}} \binom{n}{r}.\]
    Rearranging terms gives
\[
\frac{r(r-1)(r-2)+(n-r)(n-r-1)(n-r-2)}{r(r-1)(r-2)} \leq \frac{n-1}{r-1}.
\]
This simplifies to
  \[
(n-r)(n-r-1)(n-r-2) \leq (n-1)r(r-2) - r(r-1)(r-2) = (n-r)r(r-2),
\]  
    giving
    \[
    (n-r-1)(n-r-2) \leq r(r-2),
    \]
    which implies $n < 2r+1$, a contradiction.  
\end{proof}

 The classical Hilton--Milner theorem bounds the size of the largest non-trivial intersecting hypergraph.
\begin{theorem}[Hilton and Milner~\cite{HM}]\label{HM-thm}
Let $r \geq 3$ and $n\geq 2r$.
    If $\mathcal{H} \subseteq \binom{[n]}{r}$ is a non-trivial intersecting hypergraph, then
    \[
    |\mathcal{H}| \leq \binom{n-1}{r-1}  - \binom{n-r-1}{r-1} +1.
    \]
\end{theorem}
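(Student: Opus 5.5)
This is the classical Hilton--Milner theorem and is only quoted here, but I would prove it by reducing it to a statement about cross-intersecting families and a shadow bound.

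\textbf{Step 1: the reduction.} Pick a vertex $x$ of positive degree in $\mathcal{H}$ and split $\mathcal{H}$ into two families on $Y=[n]\setminus\{x\}$, with $|Y|=m=n-1\geq 2r-1$:
\[
\mathcal{A}=\{F\setminus\{x\}: x\in F\in\mathcal{H}\}\subseteq\binom{Y}{r-1},
\qquad
\mathcal{B}=\{F\in\mathcal{H}: x\notin F\}\subseteq\binom{Y}{r}.
\]
Since $\mathcal{H}$ is intersecting, $\mathcal{A}$ and $\mathcal{B}$ are cross-intersecting. Since $\mathcal{H}$ is non-trivial, $\mathcal{B}\neq\emptyset$. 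As $|\mathcal{H}|=|\mathcal{A}|+|\mathcal{B}|$, it suffices to prove the following lemma. If $\mathcal{A}\subseteq\binom{Y}{r-1}$ and $\emptyset\neq\mathcal{B}\subseteq\binom{Y}{r}$ are cross-intersecting, then
\[
|\mathcal{A}|+|\mathcal{B}|\leq\binom{m}{r-1}-\binom{m-r}{r-1}+1 .
\]

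\textbf{Step 2: translate to a shadow bound.} Let $\mathcal{B}^c=\{Y\setminus B: B\in\mathcal{B}\}$, a family of $(m-r)$-sets with $m-r\geq r-1$. An $(r-1)$-set $A$ misses some $B\in\mathcal{B}$ exactly when $A\subseteq Y\setminus B$. So cross-intersection says precisely that $\mathcal{A}$ avoids the $(r-1)$-shadow $\partial_{r-1}(\mathcal{B}^c)$. Hence
\[
|\mathcal{A}|\leq\binom{m}{r-1}-|\partial_{r-1}(\mathcal{B}^c)|,
\]
and it remains to prove the shadow inequality
\[
|\partial_{r-1}\mathcal{F}|\geq|\mathcal{F}|+\binom{k}{r-1}-1
\]
for every nonempty family $\mathcal{F}$ of $k$-sets with $k=m-r\geq r-1$. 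When $k=r-1$ (the case $n=2r$) the shadow is $\mathcal{F}$ itself, so the inequality is an equality and we are done.

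\textbf{Step 3: the shadow inequality for $k\geq r$ (main obstacle).} A single set contributes $\binom{k}{r-1}$ shadow sets. Heuristically, each further set should contribute at least one new one, but naive counting fails, since a new set's shadow can be almost entirely covered by earlier sets. I would first try the Kruskal--Katona theorem to reduce to $\mathcal{F}$ being an initial segment of the colex order. For such segments I would check directly that the shadow grows at least as fast as $|\mathcal{F}|$: the shadow of the first $\binom{j}{k}$ sets is $\binom{j}{r-1}$, and $\binom{j}{r-1}-\binom{j}{k}$ is nondecreasing in $j$ for $k\geq r$ in the relevant range ($j\leq m$, with $m\geq 2r$ here because $k\geq r$ forces $n\geq 2r+1$), and the same comparison should go through for the intermediate cascade terms.

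If the cascade bookkeeping gets messy, my fallback is to apply simultaneous $(i,j)$-shifts to $\mathcal{A}$ and $\mathcal{B}$. These preserve cross-intersection, sizes and nonemptiness of $\mathcal{B}$. Then I would induct on $m$ by splitting on the last element, using the recursion for $\binom{m}{r-1}-\binom{m-r}{r-1}$. I expect this step to be the real work; Steps 1 and 2 are bookkeeping.
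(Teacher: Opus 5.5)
\textbf{There is a genuine gap: the lemma you reduce to in Step~1 is false for every $n\ge 2r+1$.} (The paper gives no proof of this theorem; it only quotes Hilton and Milner, so your argument has to stand on its own.)

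Take $\mathcal{A}=\emptyset$ and $\mathcal{B}=\binom{Y}{r}$. They are vacuously cross-intersecting and $\mathcal{B}\neq\emptyset$. Yet for $m\ge 2r$,
\[
|\mathcal{B}|=\binom{m}{r}=\frac{m-r+1}{r}\binom{m}{r-1}>\binom{m}{r-1}\ge\binom{m}{r-1}-\binom{m-r}{r-1}+1 .
\]
Adding $\mathcal{A}\neq\emptyset$ (which your choice of $x$ guarantees), or even requiring $\mathcal{B}$ to be intersecting, does not rescue it. Take $\mathcal{A}=\{A_0\}$ and $\mathcal{B}=\{B\in\binom{Y}{r}: y\in B\}$ with $y\in A_0$. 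This gives $|\mathcal{A}|+|\mathcal{B}|=1+\binom{m-1}{r-1}$, which is of order $m^{r-1}$. The target bound is at most $r\binom{m-1}{r-2}+1$, so it is exceeded once $m\ge r^2$.

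The same failure shows up in your later steps. The shadow inequality of Step~2 fails for $\mathcal{F}=\binom{Y}{k}$ with $k=m-r\ge r$: its $(r-1)$-shadow has only $\binom{m}{r-1}<\binom{m}{r}=|\mathcal{F}|$ members. Your monotonicity claim in Step~3 is also false. The increment of $\binom{j}{r-1}-\binom{j}{k}$ at $j$ is $\binom{j}{r-2}-\binom{j}{k-1}$, which at $j=m-1$ equals $\binom{m-1}{r-2}-\binom{m-1}{r}<0$. Moreover the sequence ends at the negative value $\binom{m}{r-1}-\binom{m}{r}$. So Step~3 is not a matter of cascade bookkeeping. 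The shifting-plus-induction fallback cannot work either, since it aims at the same false statement. Only your case $n=2r$ is sound.

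\textbf{What is missing.} After splitting off $x$ you kept only cross-intersection and $\mathcal{B}\neq\emptyset$. The theorem depends essentially on two further facts: that $\mathcal{B}$ is itself intersecting, and that no vertex of $Y$ lies in every member of $\mathcal{A}\cup\mathcal{B}$. A workable proof also needs a good choice of $x$.

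The standard remedy is to shift first.
\begin{itemize}
\item If some shift $S_{ij}$ would make the family trivial, then every edge meets $\{i,j\}$. The edges containing exactly one of $i,j$ give two non-empty cross-intersecting $(r-1)$-uniform families on $n-2$ points. The Hilton--Milner bound for such pairs, $\binom{n-2}{r-1}-\binom{n-r-1}{r-1}+1$, plus the at most $\binom{n-2}{r-2}$ edges through both $i$ and $j$, gives the claim.
\item Otherwise you may assume $\mathcal{H}$ is shifted and take $x=1$. Then $\mathcal{B}=\{F\in\mathcal{H}: 1\notin F\}$ is $2$-intersecting: if $F\cap G=\{a\}$, then $(F\setminus\{a\})\cup\{1\}\in\mathcal{H}$ misses $G$.
\end{itemize}
The real work is to use this intersecting structure of $\mathcal{B}$ when comparing $|\mathcal{A}|+|\mathcal{B}|$ with the Hilton--Milner family, as in the original Hilton--Milner argument or the later short shifting proofs. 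A Kruskal--Katona estimate that ignores this structure cannot close the argument.
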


The following result of Frankl~\cite{maximum degree}  bounds the size of an intersecting hypergraph under a maximum degree condition.

\begin{theorem}[Frankl~\cite{maximum degree}]\label{maximum degree}
 Let $n> 2r$. If ${\cal H}\subseteq{[n] \choose r}$ is an intersecting hypergraph such that 
 \[
 \Delta({\cal H})\leq {n-1 \choose r-1}-{n-i-1 \choose r-1},
 \]
 for some  $2\leq i\leq r$, then
 \[
 |{\cal H}|\leq {n-1 \choose r-1}-{n-i-1 \choose r-1}+{n-i-1 \choose r-i}.
 \]
\end{theorem}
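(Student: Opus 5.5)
The plan is to split $\mathcal{H}$ at a vertex of maximum degree and reduce to a cross-intersecting problem. Let $x$ be a vertex with $\deg(x)=\Delta(\mathcal{H})$, and write $Y=[n]\setminus\{x\}$, so $|Y|=m=n-1$. Put
\[
\mathcal{A}=\{E\setminus\{x\}: x\in E\in\mathcal{H}\}\subseteq\binom{Y}{r-1},\qquad \mathcal{B}=\{E\in\mathcal{H}: x\notin E\}\subseteq\binom{Y}{r}.
\]
Then $|\mathcal{H}|=|\mathcal{A}|+|\mathcal{B}|$ and $|\mathcal{A}|=\Delta(\mathcal{H})$. Since $\mathcal{H}$ is intersecting, $\mathcal{A}$ and $\mathcal{B}$ are cross-intersecting on $Y$, and $\mathcal{B}$ is intersecting. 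It therefore suffices to maximize $|\mathcal{A}|+|\mathcal{B}|$ over such pairs subject to
\[
|\mathcal{A}|\le \binom{n-1}{r-1}-\binom{n-i-1}{r-1}.
\]
If $\mathcal{B}=\emptyset$, the bound is immediate, so I assume $\mathcal{B}\neq\emptyset$.

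The main tool will be Hilton's lemma, which is a consequence of Kruskal--Katona. It says that if $\mathcal{A}$ and $\mathcal{B}$ are cross-intersecting, then replacing $\mathcal{A}$ by the first $|\mathcal{A}|$ sets of $\binom{Y}{r-1}$ in lexicographic order, and $\mathcal{B}$ by the first $|\mathcal{B}|$ sets of $\binom{Y}{r}$ in lexicographic order, keeps the pair cross-intersecting. So I may assume both families are initial lex segments. Once $|\mathcal{A}|$ is fixed, the largest admissible $\mathcal{B}$ is the family of all $r$-sets of $Y$ meeting every member of $\mathcal{A}$; for a lex initial $\mathcal{A}$ this is again a lex initial segment, of size determined by $|\mathcal{A}|$. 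Denote this maximum by $g(|\mathcal{A}|)$. The problem becomes maximizing $f(a)=a+g(a)$ over $1\le a\le \binom{m}{r-1}-\binom{m-i}{r-1}$.

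I then evaluate $f$ at the lex ``breakpoints''. For $1\le j\le r$, let $\mathcal{A}_j$ be the family of all $(r-1)$-sets meeting $\{1,\dots,j\}$ and containing the appropriate prefix; the cleanest sequence is $\mathcal{A}_j$ equal to the $(r-1)$-sets meeting $T_j=\{1,\dots,j\}$, with $\mathcal{B}_j$ equal to the $r$-sets containing $T_j$. At $j=i$ this pair gives exactly
\[
\binom{n-1}{r-1}-\binom{n-i-1}{r-1}+\binom{n-i-1}{r-i},
\]
which is the claimed bound. Note that $\mathcal{B}_i$ is intersecting for $i\ge 1$, so the extra constraint on $\mathcal{B}$ causes no trouble at the extremum. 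It is also realized by $\mathcal{H}$ itself, because the other vertices have degree at most that of $x$. The remaining claim is that on each interval between consecutive breakpoints $f$ is maximized at an endpoint, and that, restricted to the admissible range $a\le |\mathcal{A}_i|$, the largest endpoint value is attained at $a=|\mathcal{A}_i|$. In other words, $f(|\mathcal{A}_j|)$ is increasing in $j$ for $j\le i$ once $n>2r$, which can be checked by comparing consecutive values with~\eqref{handy-ineq}.

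The main obstacle is this piecewise ``convexity'' step. One has to show that as $a$ moves through a lex segment between breakpoints, each added $(r-1)$-set kills at least as many $r$-sets of $\mathcal{B}$ as it contributes. The first approach I would try is induction on $m$ and $r$ via the standard recursion of lex segments: split by whether the set contains the element $1$. This reduces the statement to the same inequality for the pairs (link of $1$, non-link of $1$) on $m-1$ points, with the case $n>2r$ serving as the base where $\binom{m}{r}\ge\binom{m}{r-1}$. If that recursion becomes messy, the fallback is Frankl's original shifting argument: repeatedly apply $(u,v)$-shifts, and whenever a shift would raise the degree above the bound, analyze the resulting near-star structure directly.
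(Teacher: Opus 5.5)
The paper gives no proof of this statement. It is quoted from Frankl's 1987 paper and used as a black box, so there is no internal argument to compare with.

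Judged on its own, your proposal has a genuine gap. It sits in the reduction, not in the convexity step you single out. After passing to $(\mathcal{A},\mathcal{B})$ you retain only three facts: the pair is cross-intersecting, $\mathcal{B}$ is intersecting, and $|\mathcal{A}|\le D:=\binom{n-1}{r-1}-\binom{n-i-1}{r-1}$. You discard the hypothesis that every vertex $y\neq x$ also has degree at most $\Delta(\mathcal{H})$, and the relaxed problem is then strictly weaker than the theorem.

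To see this, fix $y\in Y$ and let $\mathcal{A}$ (resp.\ $\mathcal{B}$) be all $(r-1)$-subsets (resp.\ $r$-subsets) of $Y$ containing $y$.
\begin{itemize}
\item Both families are lex-initial with $y$ listed first, they are cross-intersecting, and $\mathcal{B}$ is intersecting.
\item $|\mathcal{A}|=\binom{n-2}{r-2}\le D$, because $i\ge 1$.
\item Yet $|\mathcal{A}|+|\mathcal{B}|=\binom{n-1}{r-1}$, which exceeds the claimed bound, since $\binom{n-i-1}{r-1}>\binom{n-i-1}{r-i}$ for $i\ge 2$ and $n>2r$.
\end{itemize}
This pair is just the full star at $y$, and only the degree condition at $y$ excludes it. In your notation it is $(\mathcal{A}_1,\mathcal{B}_1)$, so the monotonicity you assert fails. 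Writing $f_j=\binom{n-1}{r-1}-\binom{n-j-1}{r-1}+\binom{n-j-1}{r-j}$, Pascal's rule gives
\[
f_{j+1}-f_j=\binom{n-j-2}{r-2}-\binom{n-j-2}{r-j}.
\]
This is negative for $j=1$, zero for $j=2$, and positive only for $j\ge 3$. Hence $f_1>f_2=f_3<f_4<\dots<f_r$, and the maximum of $f$ over $a\le D$ is at least $f_1=\binom{n-1}{r-1}$. No piecewise analysis of $f$ can therefore yield the theorem.

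The missing ingredient is a way to use the degree bound at the vertices of $Y$. It is more natural to parametrize by $b=|\mathcal{B}|=|\mathcal{H}|-\Delta(\mathcal{H})$.
\begin{itemize}
\item For $b\le\binom{n-i-1}{r-i}$, the bound follows at once from $|\mathcal{A}|\le D$.
\item For $b$ up to about $\binom{n-3}{r-2}$, a Hilton/Kruskal--Katona comparison is the natural tool. On the breakpoints $b_j=\binom{n-j-1}{r-j}$ with $2\le j\le i$, the values $f_2=f_3<\dots<f_i$ behave as you want.
\item For $b>\binom{n-3}{r-2}$, the compressed values climb back to $\binom{n-1}{r-1}$. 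This regime must be excluded using the fact that $\mathcal{B}$ cannot be concentrated on a single $y\in Y$ without violating $\deg(y)\le\Delta(\mathcal{H})$. Lex compression destroys exactly this information.
\end{itemize}

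Your fallback does not supply it either. A $(u,v)$-shift preserves $\deg(u)+\deg(v)$ but can push $\deg(u)$ above the bound, which is why the maximum-degree hypothesis is hard to maintain under shifting. Saying you would ``analyze the resulting near-star structure directly'' names this difficulty rather than resolving it.
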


The size of the ``third largest'' intersecting hypergraph can be bounded as follows.

\begin{theorem}[Han and Kohayakawa~\cite{hako}]\label{3rd-largest-family}
    Let $r \geq 3$ and $n >2r$. If $\mathcal{H} \subseteq \binom{[n]}{r}$ is an intersecting hypergraph that is not a subhypergraph of a $1$-star or of the Hilton--Milner construction $\mathcal{HM}_{n,r}$, then
    \[
    |\mathcal{H}| \leq \binom{n-1}{r-1} - \binom{n-r-1}{r-1} - \binom{n-r-2}{r-2} + 2.
    \]
\end{theorem}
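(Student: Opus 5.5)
We plan to split according to how concentrated $\mathcal{H}$ is at a single vertex. Let $x$ be a vertex of maximum degree. Write $\mathcal{A}=\{E\setminus\{x\}: x\in E\in\mathcal{H}\}\subseteq\binom{[n]\setminus\{x\}}{r-1}$ for the link of $x$, and $\mathcal{B}=\{E\in\mathcal{H}: x\notin E\}\subseteq\binom{[n]\setminus\{x\}}{r}$. Then $|\mathcal{H}|=|\mathcal{A}|+|\mathcal{B}|$. Moreover $\mathcal{A}$ and $\mathcal{B}$ are cross-intersecting, and $\mathcal{B}$ is itself intersecting.

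The hypothesis says that $\mathcal{H}$ is not contained in a $1$-star, so $\mathcal{B}\neq\emptyset$. It also says $\mathcal{H}$ is not contained in $\mathcal{HM}_{n,r}$. Applied at the vertex $x$, this means $|\mathcal{B}|\ge 2$, since otherwise $\mathcal{H}\subseteq\mathcal{HM}_{n,r}$ with centre $x$ and special edge the unique member of $\mathcal{B}$.

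The first, easy step is the case $|\mathcal{B}|=2$, say $\mathcal{B}=\{B_1,B_2\}$ with $B_1\neq B_2$. Every member of $\mathcal{A}$ must meet both $B_1$ and $B_2$. The $(r-1)$-sets in $[n]\setminus\{x\}$ that miss $B_1$ number $\binom{n-r-1}{r-1}$. Now fix $y\in B_1\setminus B_2$. The sets that contain $y$ and miss $B_2\cup(B_1\setminus\{y\})$ number at least $\binom{n-r-2}{r-2}$, because $|B_1\cup B_2|\le 2r$ and $n>2r$. These two collections of sets are disjoint, and neither contributes to $\mathcal{A}$. Hence
\[
|\mathcal{A}|\le \binom{n-1}{r-1}-\binom{n-r-1}{r-1}-\binom{n-r-2}{r-2},
\]
and adding $|\mathcal{B}|=2$ gives the bound exactly. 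The same argument also shows that the extremal family is $\mathcal{B}=\{B_1,B_2\}$ with $|B_1\cap B_2|=r-1$ and $\mathcal{A}$ maximal.

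The main step is $|\mathcal{B}|\ge 3$. Here we must show that each additional edge avoiding $x$ deletes at least one more set from $\mathcal{A}$, so that $|\mathcal{A}|+|\mathcal{B}|$ does not increase. We would prove a cross-intersecting inequality: if $\mathcal{A}\subseteq\binom{[m]}{r-1}$ and $\mathcal{B}\subseteq\binom{[m]}{r}$ are cross-intersecting with $m=n-1\ge 2r$ and $|\mathcal{B}|\ge 2$, then $|\mathcal{A}|+|\mathcal{B}|\le \binom{m}{r-1}-\binom{m-r}{r-1}-\binom{m-r-1}{r-2}+2$.

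The tool we would try first is shifting. Apply $(i,j)$-shifts on $[n]\setminus\{x\}$ simultaneously to $\mathcal{A}$ and $\mathcal{B}$. These preserve both sizes and cross-intersection, and they preserve $|\mathcal{B}|\ge2$ automatically. After shifting, $\mathcal{B}$ contains the two lexicographically smallest sets $\{1,\dots,r\}$ and $\{1,\dots,r-1,r+1\}$. The bound then follows by induction on $m$ via the usual splitting on the element $m$, or by a Kruskal--Katona/Hilton type comparison. In that comparison one shows that the family $\mathcal{A}'$ of $(r-1)$-sets meeting every member of a shifted $\mathcal{B}$ shrinks by at least $|\mathcal{B}|-2$ beyond the two-set case.

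The main obstacle is preserving the hypothesis under shifting. The original $\mathcal{H}$ might shift into a subfamily of a star or of $\mathcal{HM}_{n,r}$. That is why we shift only inside $[n]\setminus\{x\}$, keeping $x$ fixed, and track only the condition $|\mathcal{B}|\ge2$, which shifting preserves. A second route, if the shifting comparison becomes messy, is to handle large $\mathcal{B}$ separately. If $\tau(\mathcal{H})\ge3$, we would bound $|\mathcal{H}|\le r^2\binom{n-3}{r-3}$ by Frankl's covering argument. Otherwise we would use Theorem~\ref{maximum degree} with $i=2$ or $i=3$ when $\Delta(\mathcal{H})$ is small, and apply the counting above only when $\Delta(\mathcal{H})$ is close to $\binom{n-1}{r-1}$. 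In the range $2r<n<$ roughly $3r$ these estimates are tight, and we expect the delicate verification to lie there.
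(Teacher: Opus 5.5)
The paper does not prove this statement (it is quoted from Han and Kohayakawa), so your argument has to stand on its own, and its main step does not.

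\textbf{The proposed lemma is false.} The cross-intersecting inequality you plan to prove for $|\mathcal{B}|\ge 3$ fails as formulated. In $[m]$ take $\mathcal{A}=\{A\in\binom{[m]}{r-1}:1\in A\}$ and $\mathcal{B}=\{B\in\binom{[m]}{r}:1\in B\}$. This pair is cross-intersecting and shifted, and $\mathcal{B}$ is even intersecting with $|\mathcal{B}|\ge 2$. Yet $|\mathcal{A}|+|\mathcal{B}|=\binom{m}{r-1}$, which exceeds your bound since $\binom{m-r}{r-1}\ge r\ge 3$. In terms of $\mathcal{H}$ this is just the full star at $1$. What excludes it is the information you chose to discard: that $x$ has maximum degree, or that $\mathcal{H}$ lies in no star and no $\mathcal{HM}_{n,r}$. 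That information is not preserved by shifts inside $[n]\setminus\{x\}$. Shifts change the degrees of the other vertices, so afterwards $x$ need not have maximum degree, and what survives is exactly the set of hypotheses under which your inequality fails. A shifting proof has to stop just before the relevant non-containment property would be destroyed and then analyse the structure at that point; that analysis is the real content of the theorem, and your plan has no substitute for it.

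\textbf{The heuristic fails even with every hypothesis in force.} You rely on the idea that each extra member of $\mathcal{B}$ deletes at least one more set from $\mathcal{A}$. Fix $T$ with $|T|=r-1$ and $x\notin T$. Let $\mathcal{B}=\{T\cup\{z\}: z\notin T\cup\{x\}\}$ and let $\mathcal{A}$ be all $(r-1)$-sets meeting $T$. Then $|\mathcal{B}|=n-r$, while $\mathcal{A}$ loses only the sets disjoint from $T$.

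For $r=3$, with $x=1$ and $T=\{2,3\}$, this is the family $\{F\in\binom{[n]}{3}:|F\cap\{1,2,3\}|\ge 2\}$ of size $3n-8$. It is intersecting and lies in no star. It also lies in no copy of $\mathcal{HM}_{n,3}$, since every vertex is missed by at least $n-3\ge 4$ edges. Yet $3n-8>2n-2=\binom{n-1}{2}-\binom{n-4}{2}-\binom{n-5}{1}+2$ for every $n\ge 7$. So the statement as quoted is false for $r=3$ and needs an extra exclusion there. No argument that treats all $r\ge 3$ uniformly, as yours does, can be correct.

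For $r=4$ the same construction attains the bound with equality (for $n\ge 9$). So the case $|\mathcal{B}|\ge 3$ has no slack, and a second extremal configuration must be isolated, which your plan does not anticipate.

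\textbf{The fallback does not close the gap.} A bound $r^2\binom{n-3}{r-3}$ for $\tau(\mathcal{H})\ge 3$ can beat the target only when $n>r^2-2r+2$, because the target is below $r\binom{n-2}{r-2}$. So the difficult range is not just $n$ up to about $3r$. Also, your counting handles only $|\mathcal{B}|=2$, which leaves large $\Delta$ with $|\mathcal{B}|\ge 3$ uncovered.

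\textbf{A small slip in the case $|\mathcal{B}|=2$.} The sets containing $y$ and missing $B_2\cup(B_1\setminus\{y\})$ number $\binom{n-1-|B_1\cup B_2|}{r-2}$. This is less than $\binom{n-r-2}{r-2}$ unless $|B_1\cap B_2|=r-1$. Drop the condition on $B_1\setminus\{y\}$: the sets containing $y$ and missing $B_2$ number exactly $\binom{n-r-2}{r-2}$, and they are still disjoint from those missing $B_1$.
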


The following theorem bounds the size of an intersecting hypergraph whose edges must intersect three specified sets.

\begin{theorem}[Frankl, Han, Hao, and Yi~\cite{FHHY}]\label{3-matching}
 Let $ r \geq 3, \ell \geq 4, $ and $n \geq r \ell $. Let $ T_1, T_2, T_3 $ be three pairwise disjoint $\ell$-subsets of $[n]$. If ${\cal H} \subseteq \binom{[n]}{r}$ is intersecting and every edge of $\mathcal{H}$ intersects each of $ T_1, T_2, T_3$, then $|{\cal H}| \leq \ell^2 \binom{n-3}{r-3}$.
\end{theorem}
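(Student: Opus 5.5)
The plan is to reduce the statement to an intersection problem for ``transversal triples''. For an edge $F\in\mathcal{H}$, pick $t_i\in F\cap T_i$ for $i=1,2,3$. Then $F\supseteq\{t_1,t_2,t_3\}$ with $(t_1,t_2,t_3)\in T_1\times T_2\times T_3$. The naive count already gives $|\mathcal{H}|\le \ell^3\binom{n-3}{r-3}$, so the task is to save a factor of $\ell$ using the intersecting property. For a triple $\tau=(a,b,c)$, let $\mathcal{H}_\tau$ be the link family $\{F\setminus\{a,b,c\}: \{a,b,c\}\subseteq F\in\mathcal{H}\}$, a family of $(r-3)$-sets.

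\textbf{Step 1 (large links force intersecting triples).} Call $\tau$ \emph{heavy} if $|\mathcal{H}_\tau|>(r-3)\binom{n-4}{r-4}$ (roughly, or $r\binom{n-4}{r-4}$ to absorb boundary effects), and \emph{light} otherwise. If $\tau,\tau'$ are disjoint triples, then $\mathcal{H}_\tau$ and $\mathcal{H}_{\tau'}$, after discarding the few sets meeting $\tau\cup\tau'$, are cross-intersecting $(r-3)$-uniform families. If $\mathcal{H}_{\tau'}$ is nonempty, any of its members is an $(r-3)$-set meeting every member of $\mathcal{H}_\tau$. This bounds $|\mathcal{H}_\tau|$ by $(r-3)\binom{n-4}{r-4}$ plus lower-order terms. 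Hence two disjoint heavy triples cannot coexist (one of them would be light), so the heavy triples form an intersecting family in the product $T_1\times T_2\times T_3$. By the product-space EKR bound (an intersecting family in $[\ell]^3$ has at most $\ell^2$ members, with equality essentially only for stars $\{x_i=a\}$), there are at most $\ell^2$ heavy triples.

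\textbf{Step 2 (counting).} Every edge containing a heavy triple is counted at most $\ell^2\binom{n-3}{r-3}$ times in total. The edges all of whose transversal triples are light must be controlled separately. The naive bound $\ell^3\cdot r\binom{n-4}{r-4}\approx \ell^3\frac{r^2}{n}\binom{n-3}{r-3}$ is of the same order as $\ell^2\binom{n-3}{r-3}$ when $n\approx r\ell$, so it is not good enough as it stands. This is the main obstacle.

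\textbf{Step 3 (handling the obstacle).} The approach is a stability split on the heavy family $\mathcal{T}$.
\begin{itemize}
\item If $\mathcal{T}$ is not contained in a star, the stability version of the product EKR gives $|\mathcal{T}|\le \ell^2-c\ell$ for some $c>0$ (for $\ell\ge4$). The resulting gain of $c\ell\binom{n-3}{r-3}$ should absorb the light contribution. To make this work, I would count each edge only once, via a single canonical triple, instead of summing link sizes.
\item If $\mathcal{T}$ is contained in a star, say all heavy triples use a fixed $a\in T_1$, split the edges according to whether they contain $a$. Edges containing $a$ number at most $\ell^2\binom{n-3}{r-3}$ minus the missing pairs of $T_2\times T_3$. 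Edges avoiding $a$ must intersect every edge through $a$. Since some heavy triple through $a$ has a large link, these edges are confined to a family of size $O(r\ell^2\binom{n-4}{r-4})$. This is then traded against the deficit, using the hypothesis $n\ge r\ell$ to compare $\binom{n-4}{r-4}$ with $\binom{n-3}{r-3}/\ell$.
\end{itemize}
I expect tuning the heaviness threshold so that both cases close exactly at $\ell^2\binom{n-3}{r-3}$ to be the delicate part. If it fails, I would instead try a shifting argument to reduce to $T_1,T_2,T_3$-compressed families.
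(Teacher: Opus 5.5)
The paper gives no proof of this statement. It is imported from Frankl--Han--Hao--Yi and used as a black box in Case 2 of the proof of Theorem~\ref{main3}, so your outline has to stand on its own, and it does not yet.

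Steps 1 and 2 are sound with the threshold $r\binom{n-4}{r-4}$. Two disjoint heavy triples cannot coexist, product EKR gives at most $\ell^2$ heavy triples, and edges through heavy triples number at most $\ell^2\binom{n-3}{r-3}$.

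The gap is exactly the obstacle you flag, and Step 3 does not close it. For edges all of whose transversal triples are light, the only bound you have is
\[
\textstyle\sum_{\tau\ \mathrm{light}}|\mathcal{H}_\tau|\le \ell^3 r\binom{n-4}{r-4}=\frac{\ell r(r-3)}{n-3}\cdot \ell^2\binom{n-3}{r-3}.
\]
Routing each edge through one canonical triple changes nothing, since a light triple can still receive $r\binom{n-4}{r-4}$ edges. Near $n=r\ell$ this bound is about $(r-3)$ times the target. At $\ell=r-1$, $n=4r^2$ (the regime of this paper's application) it is about $r/4$ times the target. Meanwhile the non-star stability gain is at most $(\ell^2-|\mathcal{T}|)\binom{n-3}{r-3}\le \ell^2\binom{n-3}{r-3}$. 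Even using the sharp fact that a non-trivial intersecting family in $[\ell]^3$ has only $O(\ell)$ members, the books balance only for $n\gtrsim \ell r^2$.

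Worse, the hypotheses allow $r\gg\ell$. Once $r(r-3)\ge n-3$ (for example $n=r\ell$ with $r\ge \ell+3$), the threshold $r\binom{n-4}{r-4}$ is at least $\binom{n-3}{r-3}$, the largest possible link. Then no triple is heavy, and Steps 1--3 give nothing beyond the trivial bound $\ell^3\binom{n-3}{r-3}$.

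The light edges are not an error term. Take $z\notin T_1\cup T_2\cup T_3$ and the intersecting family of all $r$-sets through $z$ meeting every $T_i$. Every triple link has size exactly $\binom{n-4}{r-4}$. Yet near $n=r\ell$ the family's size is a constant fraction of $\ell^2\binom{n-3}{r-3}$ (about one third for $r=10$, $\ell=4$, $n=40$).

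The same example shows that no threshold below $\binom{n-4}{r-4}$ can make Step 1 true. So for any admissible threshold, the light bound can be as large as $\ell^3\binom{n-4}{r-4}$, which is comparable to the target when $n$ is close to $r\ell$. A proof therefore has to control edges with only light triples by using the intersections among those edges themselves. Your argument never does this; it only uses intersections between edges through disjoint heavy triples.

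The star branch has a similar unfilled hole. Suppose all $\ell^2$ triples through $a$ are heavy, so your deficit is zero. For $r\ge\ell+2$ and $n>r^2$, the family $\{F\ni a\}\cup\{G\not\ni a:\ T_2\subseteq G\}$ (all members meeting each $T_i$) is intersecting. All triples through $a$ have complete links and are heavy, yet the family contains edges avoiding $a$. Your accounting then exceeds $\ell^2\binom{n-3}{r-3}$. Closing it requires the slack from edges through $a$ that meet $T_2$ or $T_3$ in several points, which you discard.

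Likewise, your claimed $O(r\ell^2\binom{n-4}{r-4})$ bound on edges avoiding $a$ is not justified. The trivial bound is $(\ell-1)\ell^2 r\binom{n-4}{r-4}$.

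As written, the proposal is a heuristic outline rather than a proof.
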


A hypergraph is {\it $t$-intersecting} if every pair of edges have at intersection of size at least $t$.

\begin{theorem}[Wilson~\cite{Wilson}]\label{t-intersecting}
  Let $n \geq (t + 1)(r - t + 1)$. If ${\cal H} \subseteq \binom{[n]}{r}$ is $t$-intersecting, then $|{\cal H}| \leq \binom{n-t}{r-t}$.
\end{theorem}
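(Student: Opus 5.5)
The plan is to prove the bound with the Delsarte--Hoffman ratio bound on the Johnson scheme $J(n,r)$, following Wilson's spectral approach. Shifting plus a direct counting argument only works for $n$ much larger than $(t+1)(r-t+1)$, so it cannot reach the stated range. Let $N=\binom{n}{r}$. A $t$-intersecting family ${\cal H}\subseteq\binom{[n]}{r}$ is an independent set in the graph on $\binom{[n]}{r}$ in which $A\sim B$ iff $|A\cap B|\le t-1$. The step I would use is the weighted ratio bound.

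\textbf{Ratio bound.} Let $M$ be a real symmetric matrix indexed by $\binom{[n]}{r}$ such that:
\begin{itemize}
\item $M_{AB}=0$ whenever $A\ne B$ and $|A\cap B|\ge t$;
\item $M$ has constant row sum $\lambda_0$;
\item $M$ has least eigenvalue $\lambda_{\min}<0$.
\end{itemize}
Then every independent set has size at most $N\cdot\frac{-\lambda_{\min}}{\lambda_0-\lambda_{\min}}$. To prove this, test the quadratic form of $M-\lambda_{\min}I$ on the characteristic vector $\mathbf 1_{\cal H}$. Since $\mathbf 1_{\cal H}^{T}M\mathbf 1_{\cal H}=0$, projecting onto $\mathbf 1$ gives the inequality.

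\textbf{Target.} We want the right-hand side to equal $\binom{n-t}{r-t}$. This forces $\frac{-\lambda_{\min}}{\lambda_0-\lambda_{\min}}=\binom{n-t}{r-t}/\binom{n}{r}$.

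\textbf{Choice of $M$.} Take $M=\sum_{i=0}^{t-1}c_iA_i$, where $A_i$ is the distance matrix of the scheme ($(A_i)_{AB}=1$ iff $|A\cap B|=i$). This is automatically supported on pairs with $|A\cap B|<t$. The eigenspaces are the common ones $V_0,V_1,\dots,V_r$ of the Johnson scheme, with $\dim V_j=\binom nj-\binom n{j-1}$.

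The eigenvalues of each $A_i$ on $V_j$ are Eberlein polynomials, which are awkward to handle directly. I would therefore work with the inclusion matrices $W_{jr}$ (rows indexed by $j$-sets, columns by $r$-sets). The products $W_{jr}^TW_{jr}$ lie in the Bose--Mesner algebra. Each one acts on $V_k$ by the scalar $\binom{r-k}{j-k}\binom{n-j-k}{r-j}$ for $k\le j$, and by $0$ for $k>j$.

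So I would take
\[
M=\sum_{j=0}^{r}\alpha_j W_{jr}^TW_{jr},
\]
and choose the $\alpha_j$ so that the entries $M_{AB}$ vanish for $t\le|A\cap B|\le r-1$. This is a triangular linear system in the $\alpha_j$; Wilson's explicit solution has alternating signs involving $\binom{r-1-j}{r-t}$. The value of $\alpha_r$ is then adjusted so that the diagonal entries are $0$.

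With this $M$, the eigenvalue on $V_k$ is an explicit alternating binomial sum $\theta_k$. I would aim to show:
\begin{itemize}
\item $\theta_1=\theta_2=\dots=\theta_t$, and this common value equals the negative number $\lambda_{\min}$ that gives the target ratio;
\item $\theta_k\ge\theta_1$ for every $k\ge t+1$.
\end{itemize}

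\textbf{Expected obstacle.} The main obstacle is this last sign check. Showing $\theta_k\ge\lambda_{\min}$ for $k\ge t+1$ is exactly where the hypothesis $n\ge(t+1)(r-t+1)$ must enter.

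The first thing I would try is to write $\theta_k-\theta_1$ as a single hypergeometric-type sum. I would then show it is nonnegative by comparing consecutive terms: the ratio of consecutive terms is governed by a factor like $\frac{(n-r-k+1)}{(r-k+1)(t+1)}$, and this factor is at least $1$ precisely when $n\ge(t+1)(r-t+1)$.

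If that comparison becomes messy, the fallback is to evaluate $\theta_k$ in closed form through Vandermonde-type identities, for example as a multiple of $\binom{n-t-k}{r-k}$ with a sign factor $(-1)^{k-t}$. Then one checks monotonicity in absolute value. Once the spectral inequality is established, the ratio bound yields $|{\cal H}|\le\binom{n-t}{r-t}$.
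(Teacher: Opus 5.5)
The paper gives no proof of this statement. It is quoted from Wilson~\cite{Wilson} and used as a black box in the proof of Theorem~\ref{main3}. Your framework is Wilson's: a weighted ratio bound in the Bose--Mesner algebra of $J(n,r)$, using a matrix supported on pairs meeting in fewer than $t$ points. Your ratio lemma is fine once you add the hypothesis $M_{AA}=0$, which is what actually gives $\mathbf{1}_{\cal H}^{T}M\mathbf{1}_{\cal H}=0$. However, everything that constitutes the proof is deferred, so as it stands there is a genuine gap.

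\textbf{The matrix is never pinned down.} The matrices in the algebra vanishing on $|A\cap B|\ge t$ form a $t$-dimensional space. Your ``triangular system'' only expresses $\alpha_t,\dots,\alpha_r$ through free $\alpha_0,\dots,\alpha_{t-1}$; the conditions $\theta_1=\dots=\theta_t$ still have to be imposed and solved. The alternating coefficients you recall, $(-1)^{t-1-i}\binom{r-1-i}{r-t}\binom{n-r-t+i}{r-t}^{-1}$, belong to a different basis. They are the coefficients of the matrix that works with respect to $W_{r-i,r}^{T}\overline{W}_{r-i,r}$, $0\le i\le t-1$, where $\overline{W}$ is the disjointness matrix. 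These basis matrices have $(A,B)$-entry $\binom{r-|A\cap B|}{r-i}$, so the support condition is automatic. Their eigenvalue on $V_k$ is $(-1)^k\binom{r-k}{i}\binom{n-r+i-k}{r-k}$, and with them $\theta_1=\dots=\theta_t=-1$ follows from a short binomial-coefficient computation. In your basis $W_{jr}^{T}W_{jr}$ the same matrix has different, messier coefficients.

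\textbf{The decisive inequality is left open.} The inequality $\theta_k\ge\theta_1$ for $k\ge t+1$ is where the hypothesis enters, so it is the theorem itself, and your guesses about it are off. Normalize $\theta_1=\dots=\theta_t=-1$. For $t=1$ one gets $\theta_2=\frac{r-1}{n-r-1}$ and $\theta_3=-\frac{(r-1)(r-2)}{(n-r-1)(n-r-2)}$. For $t=2$ one gets
\[
\theta_3=\frac{(r-2)(2n-r-5)}{(n-r-1)(n-r-2)},\qquad \theta_4=-\frac{(r-2)(r-3)(3n-r-11)}{(n-r-1)(n-r-2)(n-r-3)}.
\]
These examples show the following.
\begin{itemize}
\item $\theta_{t+1}>0$, and it is $\theta_{t+2}$ that reaches $-1$, exactly at $n=(t+1)(r-t+1)$. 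So there is no slack for lossy estimates.
\item The signs go like $(-1)^{k-t-1}$, not $(-1)^{k-t}$.
\item For $t=2$ the values carry factors linear in $n$, so they are not a sign times a single binomial such as $\binom{n-t-k}{r-k}$.
\item Your consecutive-term factor $\frac{n-r-k+1}{(r-k+1)(t+1)}$ is not $\ge 1$ ``precisely when'' $n\ge(t+1)(r-t+1)$. At $k=t+2$ it needs $n\ge(t+1)(r-t+1)+r-t-1$.
\end{itemize}
What is missing is an exact evaluation of $\theta_0$, which must equal $\binom{n}{r}/\binom{n-t}{r-t}-1$, and of every $\theta_k$ with $k\ge t+1$. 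You also need a proof that the negative ones are at least $-1$ when $n\ge(t+1)(r-t+1)$. That computation is the technical core of Wilson's paper; without it you have a correct plan, not a proof.
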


The following lemma gives conditions to guarantee the existence of a matching of size $s$ containing specified vertices.

\begin{lemma}[Huang, Loh, and Sudakov~\cite{HLS}]\label{HLS}
Let $n>rs$.
If  ${\cal  H} \subseteq \binom{[n]}{r}$ has $s$ distinct vertices $v_1, v_2,\ldots, v_s$ with degrees $\deg(v_i)>2(s-1){n-2 \choose r-2}$, then ${\cal  H}$ contains a 
matching  $e_1,\ldots, e_s$ such that $v_i \in e_i$ for all $i$.
\end{lemma}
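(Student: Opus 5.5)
The natural route is induction on $s$, combined with a switching step. For $s=1$ there is nothing to prove, since $\deg(v_1)>0$. For $s\ge 2$, the hypothesis $\deg(v_i)>2(s-1)\binom{n-2}{r-2}\ge 2(s-2)\binom{n-2}{r-2}$ holds for $v_1,\dots,v_{s-1}$, and $n>r(s-1)$. So by induction there is a matching $e_1,\dots,e_{s-1}$ with $v_i\in e_i$.

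Among all such matchings I would fix one that is extremal in a suitable sense. The first candidate is to maximize the number of edges through $v_s$ that are disjoint from $W:=e_1\cup\dots\cup e_{s-1}$. Suppose no edge through $v_s$ avoids $W$; otherwise we are done. Then every one of the more than $2(s-1)\binom{n-2}{r-2}$ edges through $v_s$ meets some $e_j$. By pigeonhole there is an index $j$ such that more than $2\binom{n-2}{r-2}$ edges through $v_s$ meet $e_j$.

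For that $j$, the plan is to exchange $e_j$ for a pair $f\ni v_j$, $g\ni v_s$ that are disjoint from each other and from $W_j:=\bigcup_{k\ne j}e_k$. I would count edges through $v_s$ avoiding $W_j$ and edges through $v_j$ avoiding $W_j\cup\{v_s\}$. I would then count the ordered pairs $(f,g)$ of such edges with $f\cap g\neq\emptyset$. Each $g$ meets at most $r\binom{n-2}{r-2}$ edges through $v_j$, and symmetrically for each $f$. The aim is to show that the number of intersecting pairs is smaller than the product of the two counts, so some disjoint pair $(f,g)$ exists. Replacing $e_j$ by $f,g$ then gives the desired $s$-matching, with $v_i$ still in $e_i$ for $i\ne j$. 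The condition $n>rs$ ensures there is enough room outside the relevant sets.

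The main obstacle is the factor $r$. A naive count of edges through $v_s$ meeting $W$ gives $r(s-1)\binom{n-2}{r-2}$, which the hypothesis $2(s-1)\binom{n-2}{r-2}$ does not beat. So the argument must exploit the freedom in choosing the $e_j$ rather than a fixed $W$. What I would try first is to pick each $e_j$ uniformly at random among the edges through $v_j$ that avoid the other designated vertices, and to bound the expected number of edges through $v_s$ destroyed by each $e_j$. For a fixed vertex $u\neq v_j$, the probability that $u\in e_j$ is at most $\binom{n-2}{r-2}/\deg'(v_j)$, which is less than $\frac{1}{2(s-1)}$ up to lower-order corrections. Summing over the at most $(s-1)$ edges $e_j$ and the $\binom{n-2}{r-2}$-bounded codegrees should show that the expected number of edges through $v_s$ hit by $W$ is at most about $\frac12\deg(v_s)$. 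That leaves an edge for $v_s$ with positive probability. The factor $2$ in the hypothesis is exactly what absorbs the loss from requiring the $e_j$ to be pairwise disjoint. Making this conditioning rigorous, for instance by choosing the $e_j$ sequentially and conditioning on the earlier choices, is the delicate step.
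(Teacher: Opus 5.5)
The paper does not prove this lemma; it simply imports it from Huang--Loh--Sudakov. Your proposal therefore has to stand on its own, and it has a genuine gap exactly at the step you flag. Write $D=\binom{n-2}{r-2}$.

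\textbf{The exchange step has no foundation.} Nothing bounds from below the number of $f\ni v_j$ avoiding $W_j\cup\{v_s\}$. The $(r-1)(s-2)$ non-designated vertices of $W_j$ may kill up to $(r-1)(s-2)D$ edges at $v_j$. For $r,s\ge 3$ this is at least the $(s-1)D$ edges at $v_j$ that are guaranteed to avoid the other $v_k$, so $e_j$ need not have any replacement. Even with such bounds, your pair count only succeeds if there are more than roughly $rD$ candidates, which is the factor $r$ again.

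\textbf{The randomized repair does not remove that factor either.} First, $\deg'(v_j)$ is not $\deg(v_j)$ up to lower order: up to $(s-1)D$ edges at $v_j$ may contain another $v_k$, so you only get $\Pr[u\in e_j]<1/(s-1)$. Second, and decisively, a random $e_j$ still has $r-1$ vertices besides $v_j$. By linearity,
\[
\sum_{u}\Pr[u\in e_j]\,\mathrm{codeg}(u,v_s)\;\le\; D\sum_u \Pr[u\in e_j]=(r-1)D,
\]
or alternatively the sum is at most $\max_u\Pr[u\in e_j]\cdot (r-1)\deg(v_s)$. Summing over $j$ gives $(r-1)(s-1)D$, or $\frac{r-1}{2}\deg(v_s)$ even granting your marginal bound. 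This is the deterministic bound again, not $\frac12\deg(v_s)$. Finally, the $e_j$ must be pairwise disjoint, which independent choices typically violate.

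\textbf{Why no averaging of this kind can work.} The constant $2$ is sharp. Take $2s-1$ special vertices $v_1,\dots,v_s,w_1,\dots,w_{s-1}$, and let $\mathcal{H}$ consist of all $r$-sets containing some $v_i$ and at least one other special vertex. Then $\deg(v_i)=\binom{n-1}{r-1}-\binom{n-2s+1}{r-1}=2(s-1)D-O\bigl(s^2\binom{n-3}{r-3}\bigr)$. Yet no matching with $v_i\in e_i$ exists, since each $e_i$ needs a second special vertex. In this example, about half the edges at each $v_j$ pass through other $v_k$'s. Every admissible $e_j$ uses a hub $w_\ell$, which lies in exactly $D$ edges with $v_s$. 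A correct proof must therefore charge each $e_j$ with only one vertex's worth of codegree with $v_s$.

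\textbf{The missing idea is visible in your own induction.} You apply the $(s-1)$-case while discarding the slack $2(s-1)D-2(s-2)D=2D$. Since deleting a vertex costs each $v_i$ at most $D$, that slack lets you delete \emph{two} vertices before inducting.
\begin{itemize}
\item[(i)] Suppose some $w\notin\{v_1,\dots,v_s\}$ lies in more than $(r-1)(s-1)\binom{n-3}{r-3}$ edges that contain $v_s$ and no other $v_k$. Delete $v_s$ and $w$ and induct. Each of the $(r-1)(s-1)$ non-designated vertices of the resulting matching blocks at most $\binom{n-3}{r-3}$ edges through $\{v_s,w\}$, so $v_s$ can be completed through $w$.
\item[(ii)] Otherwise, delete only $v_s$ and induct. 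Those $(r-1)(s-1)$ vertices then block at most $(r-1)^2(s-1)^2\binom{n-3}{r-3}$ of the more than $(s-1)D$ edges at $v_s$ avoiding the other $v_k$.
\end{itemize}
This dichotomy mirrors the pairing $v_i\leftrightarrow w_\ell$ in the example. It proves the lemma whenever $n-2\ge (r-1)^2(r-2)(s-1)$. Reaching the range stated here, or even the range $n\ge 3r^2(s-1)$ used in the paper, needs a finer argument, for which the paper relies entirely on the cited source.
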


Finally, we need the following bound for cross-intersecting non-trivial hypergraphs.

\begin{theorem}[Frankl and Kupavskii~\cite{frku}, Wu and Xiong~\cite{wuxi}]\label{non-triv-cross}
Let $n \geq 2r+1$. If $\mathcal{A},\mathcal{B} \subseteq \binom{[n]}{r}$ are cross-intersecting such that $\bigcap_{F \in \mathcal{A} \cup \mathcal{B}} F = \emptyset$, then
\[
|\mathcal{A}|\cdot |\mathcal{B}| \leq \left(\binom{n-1}{r-1}+1\right)\cdot \left(\binom{n-1}{r-1}-\binom{n-r-1}{r-1}\right).
\]
\end{theorem}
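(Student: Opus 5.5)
Since this result is quoted from Frankl--Kupavskii and Wu--Xiong, the following is only the route I would take to reprove it; I have not checked the inequalities in the last step. The extremal pair is $\mathcal{A}=\{F: x\in F,\ F\cap S\neq\emptyset\}$ and $\mathcal{B}=\{F: x\in F\}\cup\{S\}$, where $S$ is a fixed $r$-set with $x\notin S$. The plan is to compress $\mathcal{A},\mathcal{B}$ towards a structure of this type and then solve a one-parameter optimisation.

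\textbf{Step 1 (shifting).} Apply $(i,j)$-shifts simultaneously to $\mathcal{A}$ and $\mathcal{B}$. Simultaneous shifts preserve cross-intersection and both sizes. The only danger is that a shift makes $\bigcap_{F\in\mathcal{A}\cup\mathcal{B}}F$ non-empty. So I shift only while the result stays non-trivial. If some shift $S_{ij}$ would create a common element, that element must be $i$, and every set missing $i$ must contain $j$. In that case I stop and work directly with this ``almost trivial'' configuration, in which every set contains $i$ or $j$. Otherwise I reach a shifted non-trivial pair.

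\textbf{Step 2 (structure and reduction).} Let $x=1$ (or the element $i$ from the stuck case). Split each family as $\mathcal{A}=\mathcal{A}(x)\cup\mathcal{A}(\bar x)$ and $\mathcal{B}=\mathcal{B}(x)\cup\mathcal{B}(\bar x)$. Non-triviality means $\mathcal{A}(\bar x)\cup\mathcal{B}(\bar x)\neq\emptyset$. The links then satisfy three conditions: $\mathcal{A}(\bar x)$ and $\mathcal{B}(\bar x)$ are cross-intersecting on $[n]\setminus\{x\}$; $\mathcal{A}(\bar x)$ cross-intersects the $(r-1)$-uniform family $\mathcal{B}(x)\setminus x$; and symmetrically for $\mathcal{B}(\bar x)$ and $\mathcal{A}(x)\setminus x$. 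By Hilton's lemma (Kruskal--Katona for cross-intersecting pairs), I replace each of these families by an initial segment of the lexicographic order on $[n]\setminus\{x\}$ of the same size. This keeps all three cross-intersection conditions, does not decrease the product, and keeps non-triviality. The problem then becomes numerical: given $a=|\mathcal{A}(\bar x)|$ and $b=|\mathcal{B}(\bar x)|$, not both $0$, maximise
\[
\bigl(|\mathcal{A}(x)|+a\bigr)\bigl(|\mathcal{B}(x)|+b\bigr),
\]
where the lex structure determines how large $|\mathcal{A}(x)|$ and $|\mathcal{B}(x)|$ can be.

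\textbf{Step 3 (optimisation, the main obstacle).} I expect the main difficulty here. Take $b=0$ and $a\ge 1$. The lex-first set of $\mathcal{A}(\bar x)$ is $S=\{2,\dots,r+1\}$, which forces $\mathcal{B}(x)\subseteq\{F\ni x: F\cap S\neq\emptyset\}$. In general $a$ lex sets in $\mathcal{A}(\bar x)$ kill at least $\binom{n-r-1}{r-1}$ sets from $\mathcal{B}(x)$, with more killed as $a$ grows. I would show the product is a convex-type function of $a$ along lex segments, so it is maximised at an endpoint.
\begin{itemize}
\item At $a=1$ the product is exactly $(\binom{n-1}{r-1}+1)(\binom{n-1}{r-1}-\binom{n-r-1}{r-1})$.
\item At the other extreme, where $\mathcal{B}(x)$ is tiny, the product is much smaller, using $n\ge 2r+1$ and inequality \eqref{handy-ineq}.
\end{itemize}
The case with both $a,b\ge 1$ gets the same treatment after observing that it costs on both sides. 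The delicate point is the intermediate range of $a$: there the lex segments cross a ``level'' boundary, and the comparison needs careful binomial estimates that hold all the way down to $n=2r+1$. I would handle it by comparing consecutive levels: $\mathcal{A}(\bar x)$ contains all sets through $\{2,\dots,r\}$ plus some further element. In each such range I would check that increasing $a$ loses at least as much in $|\mathcal{B}(x)|$ as it gains, in product terms.
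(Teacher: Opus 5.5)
The paper does not prove this statement; it quotes it from Frankl--Kupavskii and Wu--Xiong. Your sketch therefore has to stand on its own, and it does not, because Step~2 fails. You claim that replacing $\mathcal{A}(x),\mathcal{A}(\bar x),\mathcal{B}(x),\mathcal{B}(\bar x)$ by lexicographic initial segments on $[n]\setminus\{x\}$ ``keeps non-triviality''. That is false, and the only trace of non-triviality left in your numerical problem is $a+b\ge 1$. The relaxed problem then has the wrong maximum. With $x=1$, take $|\mathcal{A}(x)|=|\mathcal{B}(x)|=\binom{n-2}{r-2}$ and $a=b=\binom{n-2}{r-1}$. The four lex segments are exactly the sets through $2$, so all three cross-intersection conditions hold and $a,b\ge 1$. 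The product is $\binom{n-1}{r-1}^2$, which exceeds $\bigl(\binom{n-1}{r-1}+1\bigr)\bigl(\binom{n-1}{r-1}-\binom{n-r-1}{r-1}\bigr)$ because $\binom{n-r-1}{r-1}\ge 1$. The reconstructed pair is simply the full star at $2$. So your remark that the case $a,b\ge 1$ ``costs on both sides'' is wrong: the loss in $|\mathcal{A}(x)|$ and $|\mathcal{B}(x)|$ is more than repaid by $a$ and $b$. Consequently no analysis of this lex optimisation, however careful, can give the bound. The stuck case of Step~1 is routed into the same Step~2 and inherits the problem.

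The underlying issue is that Hilton's lemma only sees sizes. Applied on $[n]$, it maps every cross-intersecting pair with $|\mathcal{A}|,|\mathcal{B}|\le\binom{n-1}{r-1}$ into the star of $1$. This is why Frankl and Kupavskii state their inequality under the size condition $|\mathcal{B}|\ge\binom{n-1}{r-1}+1$ rather than under non-triviality. That condition survives compression, and your $b=0$ branch is essentially the situation they treat. Your endpoint value at $a=1$ is correct there, but the claimed ``convexity'' across lex levels is the real content of that case, and you have not proved it. What your plan lacks entirely is the complementary regime: both $|\mathcal{A}|,|\mathcal{B}|\le\binom{n-1}{r-1}$ and $\bigcap_{F\in\mathcal{A}\cup\mathcal{B}}F=\emptyset$. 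That regime needs an argument that genuinely uses non-triviality. One option is a Hilton--Milner-type stability result for cross-intersecting pairs. Another is to keep information that lex compression throws away, such as $x$ being a maximum-degree element. Nothing in the proposal supplies either.
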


\section{Proof of Theorem~\ref{erdosko}}

\begin{proof}[Proof of Theorem~\ref{erdosko}.]
Assume to the contrary that there is an intersecting $r$-uniform  hypergraph  ${\cal  H}$ on vertex set $[n]$ that has $\sigma_r(\mathcal{H}) \geq r \binom{n-2}{r-2}$, but is
not a $1$-star. 
Applying Lemma~\ref{lem}, we have
\begin{equation}\label{1}
|\mathcal{H}|\geq \frac{n}{r}{n-2 \choose r-2}.
\end{equation}
Note that in~\eqref{1}
 we must have strict inequality, as otherwise Lemma~\ref{lem} implies that $\mathcal{H}$ is a regular intersecting hypergraph with degree $\binom{n-2}{r-2}$ which violates Lemma~\ref{reg-lemma}.

Now, since $\mathcal{H}$ is a non-trivial intersecting hypergraph, Theorem~\ref{HM-thm} implies
$|\mathcal{H}|\le
{n-1 \choose r-1}-{n-r-1 \choose r-1}+ 1$. 
Moreover, for every vertex $x\in [n]$ there is some edge $H\in \mathcal{H}$ such that $x\notin H$, and so $\deg(x)\le \Delta({\cal H}) \le {n-1 \choose r-1}-{n-r-1 \choose r-1}$.  Let $i$ be the smallest positive integer such that
\begin{equation}\label{2}
\Delta({\cal H})\leq {n-1 \choose r-1}-{n-i-1 \choose r-1}.
\end{equation}
Note that necessarily $2 \leq i \leq r$ as otherwise $\Delta(\mathcal{H}) \leq \binom{n-2}{r-2}$ which would imply that we have equality in \eqref{1}.

First suppose that \eqref{2} holds for $i = 2$. Then, using  ${n-1\choose r-1}={n-3 \choose r-1}+2{n-3 \choose r-2}+{n-3 \choose r-3}$ and Theorem~\ref{maximum degree}, we obtain 
\begin{align*}
|{\cal H}|&\leq  {n-1 \choose r-1}-{n-3 \choose r-1}+{n-3 \choose r-2}
\ =\ 3{n-3 \choose r-2}+{n-3 \choose r-3}\\
& ={n-2 \choose r-2}+2{n-3 \choose r-2} 
\ =\ \left(1+\frac{2(n-r)}{n-2}\right){n-2 \choose r-2}.
\end{align*}
Combining with (\ref{1}) (again using that the inequality must be strict), we get $1+\frac{2(n-r)}{n-2}> \frac{n}{r}$. This simplifies to 
\[
(n-(2r+2))(n-r)<  0,
\]
which implies $n<  2r+2$, a contradiction. 

Now, suppose that (\ref{2}) holds for some  $3\leq i\leq r$. Since $i$ is the smallest positive integer such that (\ref{2})  holds, we have
\begin{equation}\label{3}
{n-1 \choose r-1}-{n-i \choose r-1}<\Delta({\cal H})\leq {n-1 \choose r-1}-{n-i-1 \choose r-1}.
\end{equation}

First we give a lower bound on the number of edges of 
$\mathcal{H}$.

\begin{claim}
    $r|\mathcal{H}| > \binom{n-1}{r-1} - \binom{n-i}{r-1} + (n-1)\binom{n-2}{r-2}.$
\end{claim}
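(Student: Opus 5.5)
Let $x$ be a vertex of maximum degree, so that $\deg(x)=\Delta(\mathcal{H})>\binom{n-1}{r-1}-\binom{n-i}{r-1}$ by \eqref{3}. We will bound $r|\mathcal{H}|=\sum_{v}\deg(v)$ from below by grouping the other $n-1$ vertices into $(r-1)$-sets that, together with $x$, form non-edges. Each such non-edge then contributes at least $\sigma_r(\mathcal{H})\ge r\binom{n-2}{r-2}$ to the degree sum.

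First we find many non-edges through $x$. Since $\mathcal{H}$ is non-trivial, some edge $H$ avoids $x$. Every $r$-set containing $x$ and disjoint from $H$ is then a non-edge, because $\mathcal{H}$ is intersecting. These give plenty of non-edges through $x$, but we need a partition-type structure. Assume for simplicity that $r-1$ divides $n-1$. Partition $[n]\setminus\{x\}$ into $(r-1)$-sets $P_1,\dots,P_m$, $m=(n-1)/(r-1)$, such that each $P_j\cup\{x\}$ is a non-edge. Averaging over a uniformly random partition, we need only one partition for which $\sum_j\bigl(\deg(x)+\deg(P_j)\bigr)\ge m\,\sigma_r(\mathcal{H})$. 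This gives
\[
\sum_{v\ne x}\deg(v)+m\deg(x)\ \ge\ m\cdot r\binom{n-2}{r-2},
\]
hence
\[
r|\mathcal{H}|\ \ge\ \frac{n-1}{r-1}\, r\binom{n-2}{r-2}-\Bigl(\frac{n-1}{r-1}-1\Bigr)\deg(x).
\]
This has the wrong sign on $\deg(x)$, so a cleverer grouping is needed.

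The better route is to apply the Ore condition to pairs. For each $v\ne x$, choose an $(r-2)$-set $T_v$ so that $S_v=\{x,v\}\cup T_v$ is a non-edge and $T_v$ consists of low-degree vertices. Then $\deg(v)\ge\sigma_r(\mathcal{H})-\deg(x)-\deg(T_v)$. Summing over $v\ne x$ and adding $\deg(x)$ gives
\[
r|\mathcal{H}|\ge\deg(x)+(n-1)r\binom{n-2}{r-2}-(n-1)\deg(x)-\sum_v\deg(T_v),
\]
which still carries a $-(n-1)\deg(x)$ term. Since the claim has $(n-1)\binom{n-2}{r-2}$ rather than $(n-1)r\binom{n-2}{r-2}$, the intended bookkeeping must be that each of the $n-1$ vertices $v\ne x$ has $\deg(v)$ large on average. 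So the approach I will actually take is to pick $T$ as a fixed set of $r-1$ vertices. For each $v\notin T\cup\{x\}$, use $\deg(v)\ge\sigma_r-\deg(T)$ whenever $T\cup\{v\}$ is a non-edge. If $T$ is chosen inside an edge-avoiding region (for instance $T\subseteq H$, so that most $T\cup\{v\}$ are non-edges by the intersecting property), we get $\deg(v)\ge r\binom{n-2}{r-2}-\deg(T)$ for most $v$.

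The main step, and the main obstacle, is to show that $\sum_{v\ne x}\deg(v)\ge(n-1)\binom{n-2}{r-2}$ strictly, with the extra $\deg(x)$ giving the remaining term $\binom{n-1}{r-1}-\binom{n-i}{r-1}$. I would prove this by taking a uniformly random non-edge $S$ avoiding $x$ and averaging $\deg(S)\ge r\binom{n-2}{r-2}$; a vertex's average degree over the vertices of a random non-edge is then at least $\binom{n-2}{r-2}$. The difficulty is that this averages over a non-uniform distribution. I expect to handle it by weighting as in Lemma~\ref{lem}: apply Cauchy--Schwarz to $\sum_{v\ne x}\deg(v)\bigl(\binom{n-2}{r-1}-\deg_{\,\overline x}(v)\bigr)$. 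I expect this to yield an average degree of at least $\binom{n-2}{r-2}$ on $[n]\setminus\{x\}$, with strictness coming from the non-regularity argument of Lemma~\ref{reg-lemma}.
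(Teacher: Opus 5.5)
Your reduction is correct. Since $\deg(x)>\binom{n-1}{r-1}-\binom{n-i}{r-1}$ strictly by \eqref{3}, it suffices to prove the non-strict bound $\sum_{v\neq x}\deg(v)\ge (n-1)\binom{n-2}{r-2}$. Strictness therefore needs no appeal to Lemma~\ref{reg-lemma}, which is about regular families and does not apply here. The gap is that you only \emph{expect} this key bound, and the route you propose does not give it. Averaging over non-edges avoiding $x$ gives
\[
\sum_{v\neq x}\deg(v)\Bigl(\binom{n-2}{r-1}-\deg_{\bar x}(v)\Bigr)\ \ge\ \sigma_r(\mathcal{H})\Bigl(\binom{n-1}{r}-E\Bigr),
\]
where $\deg_{\bar x}(v)$ counts edges through $v$ that avoid $x$, and $E\ge 1$ is the number of edges avoiding $x$.

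In Lemma~\ref{lem} the weight $\binom{n-1}{r-1}-\deg(v)$ decreases in $\deg(v)$ itself, which is why Cauchy--Schwarz works there. Here the weight only sees $\deg_{\bar x}(v)$. But $\deg(v)$ also contains the codegree of $v$ with $x$, and these codegrees sum to $(r-1)\deg(x)$, which is large. Carry out your plan: use $\sum_{v\neq x}\deg(v)\deg_{\bar x}(v)\ge\sum_{v\ne x}\deg_{\bar x}(v)^2\ge (rE)^2/(n-1)$ together with $\sum_{v\ne x}\deg(v)=rE+(r-1)\deg(x)$. You then get only
\[
\sum_{v\neq x}\deg(v)\ \ge\ (n-1)\binom{n-2}{r-2}-\frac{(r-1)\deg(x)\,E}{\binom{n-1}{r}-E},
\]
which falls strictly short of what you need. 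The vertices with high codegree with $x$ are over-weighted in a random non-edge avoiding $x$, and your averaging cannot control them.

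The missing idea is to average over a family of non-edges that is genuinely uniform. The paper takes an edge $H$ \emph{through} $x$. Since $\mathcal{H}$ is intersecting, every $r$-subset of $[n]\setminus H$ is a non-edge. Plain averaging then gives $\sum_{z\notin H}\deg(z)\ge (n-r)\binom{n-2}{r-2}$ with no Cauchy--Schwarz, and it remains to control the $r-1$ vertices of $H\setminus\{x\}$.

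For this, note the link of $x$ has more than $\binom{n-2}{r-2}$ members, so by Erd\H{o}s--Ko--Rado it contains two disjoint sets $H_1,H_2$. For every $y\in T=[n]\setminus(\{x\}\cup H_1\cup H_2)$, the sets $H_1\cup\{y\}$ and $H_2\cup\{y\}$ are non-edges. Now split into two cases:
\begin{itemize}
\item If some $y\in T$ has $\deg(y)<\binom{n-2}{r-2}$, then $\sum_{z\in H_1}\deg(z)>(r-1)\binom{n-2}{r-2}$, and one takes $H=H_1\cup\{x\}$.
\item Otherwise every vertex of $T$ has degree at least $\binom{n-2}{r-2}$. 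The two non-edges $H_1\cup\{y\}$ and $H_2\cup\{y'\}$, with $y\ne y'$ in $T$, cover $2r$ vertices with total degree at least $2r\binom{n-2}{r-2}$. Each of the remaining $n-2r-1$ vertices of $T$ contributes at least $\binom{n-2}{r-2}$.
\end{itemize}
This structure (an edge through $x$, two disjoint link edges, and the case split on the minimum degree in $T$) is what your proposal lacks.
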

\renewcommand{\qedsymbol}{$\blacksquare$}
\begin{proof}
Let $x$ be a vertex of ${\cal H}$ of maximum degree. 
Define the \emph{link hypergraph} ${\cal  H}(x)=\{H\setminus \{x\} \mid  x\in H\in {\cal  H}\}.$
By \eqref{3}, 
\begin{equation*}|\mathcal{H}(x)| > \binom{n-1}{r-1} - \binom{n-i}{r-1} \geq \binom{n-2}{r-1}+\binom{n-2}{r-2} - \binom{n-4}{r-1} > \binom{n-2}{r-2},
\end{equation*} 
so we may apply the Erd\H os--Ko--Rado theorem to find disjoint edges $H_1,H_2 \in \mathcal{H}(x)$.
Put $T:= [n] \setminus (\{x\} \cup H_1 \cup H_2)$ and let $y$ be a vertex of minimum degree in $T$. For each $y'\in T$, the $r$-sets $H_1 \cup \{y'\}$ and $H_2 \cup \{y'\}$ are not in $\mathcal{H}$ as they are disjoint from edges  $H_2 \cup \{x\}$ and $H_1 \cup\{x\}$, respectively. Therefore, $\deg(H_j \cup \{y'\}) \geq r \binom{n-2}{r-2}$ for $j=1,2$, by the Ore-degree condition on $\mathcal{H}$.
We distinguish two cases based on the degree of $y$.

\medskip
\noindent
{\bf Case 1:} $\deg(y) \geq \binom{n-2}{r-2}$.

\medskip
 By the choice of $y$, the degree of each vertex in $T$ is at least $\binom{n-2}{r-2}$. Choose a vertex $y'\in T$, $y'\neq y$. Now, 
\begin{align*}
    r|\mathcal{H}| &= \sum_{z \in [n]} \deg(z) = \deg(x) + \deg(H_1 \cup \{y\}) + \deg(H_2 \cup \{y'\}) + \sum_{z \in T\setminus\{y,y'\}} \deg(z) \\
    & > \binom{n-1}{r-1} - \binom{n-i}{r-1} + 2r\binom{n-2}{r-2} + (n-2r-1) \binom{n-2}{r-2}\\
    & =\binom{n-1}{r-1} - \binom{n-i}{r-1} + (n-1)\binom{n-2}{r-2},
\end{align*}
and we are done.

\medskip
\noindent
{\bf Case 2:} $\deg(y) < \binom{n-2}{r-2}$.

\medskip

Since $\deg(H_1 \cup \{y\}) \geq r\binom{n-2}{r-2}$, we have
\begin{equation}\label{part2}
\sum_{z\in H_1}\deg(z)> (r-1){n-2 \choose r-2}.
\end{equation}
Now, put  $H := H_1 \cup \{x\} \in \mathcal{H}$.
As $\mathcal{H}$ is intersecting, each $r$-set in $V(\mathcal{H}) \setminus H$ is a non-edge, so 
\begin{equation}\label{part3}
     \sum_{z \in V(\mathcal{H})\setminus H} \deg(z) = \frac{1}{\binom{n-r-1}{r-1}}  \left(\sum_{S \in \binom{V(\mathcal{H})\setminus H}{r}} \deg(S) \right) \geq   \frac{\binom{n-r}{r}}{\binom{n-r-1}{r-1}} \cdot r \binom{n-2}{r-2} = (n-r) \binom{n-2}{r-2}.
\end{equation}
Therefore, by \eqref{3}, \eqref{part2}, and \eqref{part3}, we obtain
\begin{align*}
   & r|{\cal H}|  =\sum_{z\in V(\mathcal{H})}\deg(z)=\deg(x)+\sum_{z\in H\setminus \{x\}}\deg(z)+\sum_{z\in V(\mathcal{H})\setminus H}\deg(z)\\
&> {n-1 \choose r-1}-{n-i \choose r-1}+(r-1){n-2 \choose r-2}+(n-r){n-2 \choose r-2}
 = {n-1 \choose r-1}-{n-i \choose r-1}+(n-1){n-2 \choose r-2},
\end{align*}
as desired.
\end{proof}
\renewcommand{\qedsymbol}{$\square$}
 Now, by the Claim and
 using  $(r-1){n-1 \choose r-1}=(n-1){n-2 \choose r-2}$,
we have 
\begin{equation}\label{edgeb}
r|{\cal H}|>{n-1 \choose r-1}-{n-i \choose r-1}+(r-1){n-1 \choose r-1} = r{n-1 \choose r-1}-{n-i \choose r-1}.
\end{equation}
By Theorem~\ref{maximum degree}, $|{\cal H}|\leq {n-1 \choose r-1}-{n-i-1 \choose r-1}+{n-i-1 \choose r-i}$.  Combining this with \eqref{edgeb}  gives
\begin{align}\label{5}
    r{n-i-1 \choose r-1}-{n-i \choose r-1} <    r{n-i-1 \choose r-i}. 
\end{align}
The left-hand side of \eqref{5}  can be rewritten as
\[
 r{n-i-1 \choose r-1}-{n-i \choose r-1} =(r-1){n-i-1 \choose r-1}-{n-i-1 \choose r-2} = \frac{(r-1)(n-i-r)}{n-i-r+1}{n-i-1 \choose r-1},
\]
hence \eqref{5} implies
\begin{equation*}
   \frac{(r-1)(n-i-r)}{n-i-r+1}< r\cdot \frac{{n-i-1 \choose r-i}}{{n-i-1 \choose r-1}}=
   r\cdot \frac{\prod_{j=1}^{n-r-1} \frac{n-i-j}{j}}{\prod_{j=1}^{r-1} \frac{n-i-j}{j}}
   = r\cdot 
 \prod_{j=r}^{n-r-1} \frac{n-i-j}{j}.
\end{equation*}
Therefore, 
\begin{equation*}
    \frac{(r-1)(n-i-r)}{n-i-r+1}<r\cdot\frac{(n-i-r)(n-i-r-1)\cdots(r+2-i)(r+1-i)}{(n-r-1)(n-r-2)\cdots (r+1)r},
    \end{equation*}
    and equivalently, 
\begin{align}\label{77}
 \frac{r-1}{n-i-r+1}<\frac{(n-i-r-1)\cdots(r+1-i)}{(n-r-1)(n-r-2)\cdots(r+1)}.
\end{align}
Clearly, in the range $3 \le i \le r$, the right-hand side of~\eqref{77}  is maximized when $i=3$, also for $3\le i\leq r< n-r$ we have $\frac{r-1}{n-r-2} \leq \frac{r-1}{n-i-r+1}$. Therefore, from \eqref{77} we get
the following inequality:
\begin{align*}
  \frac{r-1}{n-r-2} < \frac{(n-r-4)(n-r-5)\cdots(r-1)(r-2)}{(n-r-1)(n-r-2)\cdots (r+2)(r+1)} = \frac{r(r-1)(r-2)}{(n-r-1)(n-r-2)(n-r-3)},
\end{align*}
which gives
$(n-1-r)(n-r-3) <r(r-2)$, which contradicts the assumption $n\ge 2r+1$. 
\end{proof}

\noindent
{\bf Remark.}
The Hilton--Milner theorem provides a simple proof of Theorem~\ref{erdosko} with the weaker condition $n\geq r^2$. To see this, suppose to the contrary that ${\cal  H}$ is an intersecting $r$-uniform hypergraph on vertex set $[n]$ that is not a $1$-star. By Lemma~\ref{lem},  $|{\cal H}|\geq \frac{n}{r} \binom{n-2}{r-2}$. On the other hand, the Hilton--Milner theorem gives  $|{\cal  H}|< r{n-2 \choose r-2}$, for $n\geq 2r+1$. Therefore, $\frac{n}{r}{n-2 \choose r-2}\leq |{\cal  H}|< r{n-2 \choose r-2}$ which implies $n<r^2$, a contradiction.

\section{Proof of Theorem~\ref{main3}}

The following Proposition, which is in general weaker than Theorem~\ref{main3}, does imply it for $3\leq r \leq 5$.

\begin{proposition}\label{small-r-prop}
Let $n \geq r^2(r-1)$. If $\mathcal{H} \subseteq \binom{[n]}{r}$ is a non-trivial intersecting hypergraph, then $\sigma_r(\mathcal{H}) \leq r \left( \binom{n-2}{r-2} - \binom{n-r-2}{r-2}\right)$.
\end{proposition}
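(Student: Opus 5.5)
The plan is to argue by contradiction. Suppose $\mathcal{H}$ is non-trivial intersecting with
\[
\sigma_r(\mathcal{H})>r\left(\binom{n-2}{r-2}-\binom{n-r-2}{r-2}\right)=:r D,
\]
where $D=\delta(\mathcal{HM}_{n,r})$. By \eqref{handy-ineq}, $D\ge r\binom{n-r-2}{r-3}$, which is roughly $\frac{r(r-2)}{n}\binom{n-2}{r-2}$.

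Lemma~\ref{lem} alone is not enough. It gives $|\mathcal{H}|\gtrsim (r-2)\binom{n-2}{r-2}$, but the Hilton--Milner bound is about $r\binom{n-2}{r-2}$, so the two do not clash. The loss comes from one vertex of very large degree, exactly as in $\mathcal{HM}_{n,r}$. I will therefore isolate a maximum-degree vertex $x$ and count the remaining degrees via non-edges, as in \eqref{part3}.

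Since $\mathcal{H}$ is non-trivial, there is an edge $H$ with $x\notin H$. Every $r$-subset of $[n]\setminus(H\cup\{x\})$ is a non-edge, because it misses $H$. Averaging the Ore condition over these $r$-sets, as in \eqref{part3}, gives
\[
\sum_{z\notin H\cup\{x\}}\deg(z)\ \ge\ \frac{n-r-1}{r}\,\sigma_r(\mathcal{H})\ >\ (n-r-1)D .
\]
Hence
\[
r|\mathcal{H}|-\Delta(\mathcal{H})-\sum_{z\in H}\deg(z) > (n-r-1)D,
\]
and $(n-r-1)D$ is about $r(r-2)\binom{n-2}{r-2}$.

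For the upper side, let $i$ be the least index with $\Delta(\mathcal{H})\le \binom{n-1}{r-1}-\binom{n-i-1}{r-1}$. Non-triviality forces $i\le r$, as in the proof of Theorem~\ref{erdosko}. If $i\le 1$, then every degree is at most $\binom{n-2}{r-2}$. In that case Lemma~\ref{lem} and the Hilton--Milner bound directly contradict each other once $n\ge r^2$, as in the Remark after Theorem~\ref{erdosko}.

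For $2\le i\le r-1$, Theorem~\ref{maximum degree} bounds $|\mathcal{H}|$ by about $i\binom{n-2}{r-2}-\binom{i}{2}\binom{n-3}{r-3}$, while $\Delta(\mathcal{H})$ is at least about $(i-1)\binom{n-2}{r-2}$. Each vertex of $H$ has degree at most $|\mathcal{H}|-\Delta(\mathcal{H})+\binom{n-2}{r-2}$. This holds because edges through both $x$ and $z$ number at most $\binom{n-2}{r-2}$, and edges missing $x$ number $|\mathcal{H}|-\Delta$. Substituting these bounds should give an inequality of the form
\[
r|\mathcal{H}|-\Delta-\sum_{z\in H}\deg(z)\le (r-1)\,i\,\binom{n-2}{r-2}+O\!\left(r^2\binom{n-3}{r-3}\right).
\]
This must be compared with $(n-r-1)D$. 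I expect the hypothesis $n\ge r^2(r-1)$ to be what makes the comparison work. It is needed to control the lower-order terms $\binom{n-r-2}{r-2}$ against $\binom{n-2}{r-2}$, using $\binom{n-2}{r-2}-\binom{n-r-2}{r-2}\ge r\binom{n-r-2}{r-3}$ and $\binom{n-r-2}{r-3}\ge(1-\frac{r^2}{n})\binom{n-3}{r-3}$.

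The main obstacle is the extreme case $i=r$, where $\mathcal{H}$ is close to $\mathcal{HM}_{n,r}$. Here the leading terms nearly cancel, since $\mathcal{HM}_{n,r}$ itself attains the bound. For this case I would use Theorem~\ref{3rd-largest-family}. If $\mathcal{H}$ is not a subhypergraph of $\mathcal{HM}_{n,r}$, its size falls short of the Hilton--Milner bound by $\binom{n-r-2}{r-2}-1$. Multiplied by $r$, this deficit should beat the slack in the inequality above once $n\ge r^2(r-1)$.

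If instead $\mathcal{H}\subseteq\mathcal{HM}_{n,r}$, I would argue directly. Take any vertex $y\notin S\cup\{x\}$, where $S$ is the special edge. In $\mathcal{HM}_{n,r}$ the degree of $y$ is exactly $D$, and in $\mathcal{H}$ it can only be smaller. Now choose an $r$-set consisting of $r$ such vertices $y$. It avoids $S$, so it is a non-edge of $\mathcal{H}$, and its degree is at most $rD$. This contradicts $\sigma_r(\mathcal{H})>rD$.
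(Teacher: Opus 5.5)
There is a genuine gap, and it bites for small $r$, which is exactly where the paper needs this proposition (it is used for $3\le r\le 5$).

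First, the case $i\le 1$ is not settled by the Remark after Theorem~\ref{erdosko}. That argument used $\sigma_r(\mathcal{H})\ge r\binom{n-2}{r-2}$. You only have $\sigma_r(\mathcal{H})>rD$ with $D\approx\frac{r(r-2)}{n}\binom{n-2}{r-2}$. So Lemma~\ref{lem} gives only $|\mathcal{H}|\gtrsim (r-2)\binom{n-2}{r-2}$, which, as you noted yourself at the outset, does not clash with the Hilton--Milner bound.

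Second, for $2\le i\le r-1$, the bound ``each vertex of $H$ has degree at most $|\mathcal{H}|-\Delta+\binom{n-2}{r-2}$'' points the wrong way. To bound $r|\mathcal{H}|-\Delta-\sum_{z\in H}\deg(z)$ from above you need a \emph{lower} bound on $\sum_{z\in H}\deg(z)$. The only generic one is $\sum_{z\in H}\deg(z)\ge|\mathcal{H}|$, which gives $\sum_{z\notin H\cup\{x\}}\deg(z)\le (r-1)|\mathcal{H}|-\Delta$. The point is that an edge avoiding $x$ is only known to meet $H\cup\{x\}$ once, so it may contribute $r-1$.

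At $i=2$, the extra term $\binom{n-3}{r-2}$ in Theorem~\ref{maximum degree} is of leading order. So this bound is about $(3r-4)\binom{n-2}{r-2}$, to be compared with $(n-r-1)D\approx r(r-2)\binom{n-2}{r-2}$. For $r=3$ the bound is strictly larger. For $r=4$ the leading terms tie, and the lower-order terms go the wrong way: you get $4n^2-26n+42$ against $4n^2-38n+90$. Concretely, for $r=3$ you must beat $(n-4)D=3n-12$ but only get $2(3n-8)-(n-2)=5n-14$. You still only get $3n-2$ if you use Theorem~\ref{3rd-largest-family} instead.

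Even your stated target $(r-1)i\binom{n-2}{r-2}$ exceeds $r(r-2)\binom{n-2}{r-2}$ at $i=r-1$. Your handling of $\mathcal{H}\subseteq\mathcal{HM}_{n,r}$ is correct. The case $i=r$ can also be closed with Theorem~\ref{3rd-largest-family} using the bookkeeping above. The failure is in the low-$\Delta$ regime, where $\Theta\bigl(\binom{n-2}{r-2}\bigr)$ edges may avoid $x$.

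The missing idea is to make \emph{every} edge, not only those through $x$, meet a small set in at least two vertices.
Fix an edge $e$ and, for each $v\in e$, an edge $e_v$ with $v\notin e_v$; these exist by non-triviality. Put $T=e\cup\bigcup_{v\in e}e_v$. A short case check gives $|T|\le r^2-r+1$.
Any edge meets $e$ at some $v$, and then meets $e_v$ at a vertex other than $v$. So every edge has at least two vertices in $T$.
Average the Ore condition over the $r$-subsets of $[n]\setminus T$, all of which are non-edges, and note that each edge is counted at most $r-2$ times. This gives $|\mathcal{H}|\ge\frac{n-r^2+r-1}{r-2}D\approx r\binom{n-2}{r-2}$.
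That contradicts the Han--Kohayakawa bound, which is about $(r-1)\binom{n-2}{r-2}$, once $n\ge r^2(r-1)$. No case analysis on $\Delta(\mathcal{H})$ is needed.
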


\begin{proof}
   For a contradiction assume that $\mathcal{H} \subseteq \binom{[n]}{r}$ is a non-trivial intersecting hypergraph, where $\sigma_r(\mathcal{H}) > r  \left( \binom{n-2}{r-2} - \binom{n-r-2}{r-2}\right)$.
Fix an arbitrary edge $e$ of $\mathcal{H}$. As $\mathcal{H}$ is non-trivial, for each vertex $x\in e$, there is an edge $e_x$ that does not contain $x$. Define  $T :=e  \cup_{x \in e} e_x$,  the vertex set covered by the edges $e$ and  $e_x$,  where $x\in e$.
We claim that $|T| \leq r^2-r+1$. First, suppose that each of the  edges $e_x$ intersect $e$ in at least two vertices. Then, clearly $|T| \leq r + r(r-2) = r^2-r$. 
Now assume that  there is an $e_x$ such that $e \cap e_x = \{z\}$. The edge $e_z$ does not contain $z$, but must intersect both $e$ and $e_x$, hence the union  of  those three edges includes at most $r+r-1+r-2 = 3r-3$ vertices. Every other  edge $e_y$ (for $y \neq x,z$) must intersect $e \cup e_x \cup e_z$ in at least two vertices, hence  each $e_y$ contributes at most $r-2$ new vertices, for a total of at most $3r-3 + (r-2)(r-2) = r^2-r+1$ vertices, as desired.

Now, observe that every edge of $\mathcal{H}$ must intersect $T$ in at least two vertices,
   hence every $r$-subset $S$ of $[n] \setminus T$ is a non-edge and thus has Ore-degree $\deg(S) > r  \left( \binom{n-2}{r-2} - \binom{n-r-2}{r-2}\right)$.
Averaging over all $r$-subsets of $[n]\setminus T$ gives that the sum of the degrees of vertices in $[n]\setminus T$ is at least
\[
\frac{\binom{n-|T|}{r}}{\binom{n-|T|-1}{r-1}}\cdot r  \left( \binom{n-2}{r-2} - \binom{n-r-2}{r-2}\right) \geq (n-r^2+r-1)\cdot \left( \binom{n-2}{r-2} - \binom{n-r-2}{r-2}\right).
\] 
Now, using that every edge must intersect $T$ in at least two vertices (and that $T$ spans $r+1 \geq 2$ edges), we can give a lower bound on   the number of edges of $\mathcal{H}$ as
    \[
     \frac{n-r^2+r-1}{r-2}  \left( \binom{n-2}{r-2} - \binom{n-r-2}{r-2}\right) + 2 \leq   |\mathcal{H}|.
    \]
On the other hand, recall that the Hilton--Milner construction $\mathcal{HM}_{n,r}$  has Ore-degree  $r \left( \binom{n-2}{r-2} - \binom{n-r-2}{r-2}\right)$, hence we may also assume that $\mathcal{H} \not \subseteq \mathcal{HM}_{n,r}$. Therefore, by Theorem~\ref{3rd-largest-family},
\[
|\mathcal{H}| \leq \binom{n-1}{r-1} - \binom{n-r-1}{r-1} - \binom{n-r-2}{r-2} + 2 = \frac{n-1}{r-1}\binom{n-2}{r-2} - \frac{n-2}{r-1}\binom{n-r-2}{r-2} +2.
\]
Comparing the two bounds on $|\mathcal{H}|$ and rearranging terms gives
\[
\left(\frac{n-r^2+r-1}{r-2} - \frac{n-1}{r-1} \right)\cdot \binom{n-2}{r-2}\leq \left(\frac{n-r^2+r-1}{r-2}-\frac{n-2}{r-1}  \right)\cdot  \binom{n-r-2}{r-2}.
\]
Simplifying and rearranging gives
\[
\frac{\binom{n-2}{r-2}}{\binom{n-r-2}{r-2}} \leq  \frac{n-r^3+2r^2-3}{n-r^3+2r^2-r-1} = 1 + \frac{r-2}{n-r^3+2r^2-r-1}.
\]
Now using Bernoulli's inequality $(1+x)^k \geq 1+kx$ when $x \geq -1$, we get
\[
\frac{\binom{n-2}{r-2}}{\binom{n-r-2}{r-2}} 
 = \frac{(n-2)\cdot\ldots\cdot (n-r+1)}{(n-r-2)\cdot\ldots\cdot (n-2r+1)} \geq \left(\frac{n-2}{n-r-2}\right)^{r-2} = \left(1+ \frac{r}{n-r-2}\right)^{r-2} \ge  1+ \frac{(r-2)r}{n-r-2}.
\]
Combining these two estimates on $\binom{n-2}{r-2}/{\binom{n-r-2}{r-2}}$ gives
\[
\frac{r}{n-r-2} \leq \frac{1}{n-r^3+2r^2-r-1}
\]
which implies $n < r^2(r-1)$, a contradiction.
\end{proof}

\begin{proof}[Proof of Theorem~\ref{main3}.]
By Proposition~\ref{small-r-prop}, we only need to handle the case when $r \geq 6$ 
and $n \geq 4r^2$. Suppose, for the sake of a contradiction, that  $\mathcal{H} \subseteq \binom{[n]}{r}$ is a non-trivial intersecting hypergraph, with $\sigma_r({\cal H})> r\left({n-2 \choose r-2}-{n-r-2 \choose r-2}\right)$. Let $u \in V({\cal H})$ be a vertex of maximum degree $\Delta : = \Delta(\mathcal{H})$. Let ${\cal H}_1 \subseteq \mathcal{H}$ be the subhypergraph formed by of all edges containing vertex $u$. Let ${\cal H}_2 = {\cal H} \setminus {\cal H}_1$ be hypergraph formed by the remaining edges. We consider the following cases.
\medskip

\noindent
{\bf Case 1:} $\Delta \leq \binom{n-1}{r-1} - \binom{n-3}{r-1}$.

\medskip
By Theorem~\ref{maximum degree},
\[
|{\cal H}| \leq \binom{n-1}{r-1} - \binom{n-3}{r-1} + \binom{n-3}{r-2} = \frac{3n - 2r - 2}{n - 2} \binom{n-2}{r-2} \leq 3 \binom{n-2}{r-2}.
\]
Since $\sigma_r({\cal H}) > r\left(\binom{n-2}{r-2} - \binom{n-r-2}{r-2}\right)$, by Lemma~\ref{lem} and~\eqref{handy-ineq}, 
we have  
\begin{align}\label{8}
|{\cal H}| > \frac{n}{r} \left( \binom{n-2}{r-2} - \binom{n-r-2}{r-2} \right) > n \binom{n-r-2}{r-3}. 
\end{align}
Comparing the binomial coefficients  using  $(1+x)^k \geq 1+kx$ and $n\geq 4r^2$ gives
\begin{align*}
  \frac{{n-r-2 \choose r-3}}{{n-3 \choose r-3}}=\frac{(n-r-2)\cdots (n-2r+2)}{(n-3)\cdots (n-r+1)}\geq \Big(1-\frac{r-1}{n-r+1}\Big)^{r-3}
   \geq 1-\frac{(r-1)(r-3)}{n-r+1} \geq 1-\frac{1}{4} =\frac{3}{4}.
\end{align*}
Combining with \eqref{8} yields
\[
|\mathcal{H}|  \ge  \frac{3n}{4} \binom{n-3}{r-3}\ge \frac{3}{4}(r-2)\binom{n-2}{r-2}.
\]

   Now, the upper and lower bounds for $|{\cal H}|$ together imply that $r < 6$, a contradiction.

\medskip

\noindent
{\bf Case 2:} $\Delta > \binom{n-1}{r-1} - \binom{n-3}{r-1}$.

\medskip
By Theorem~\ref{F3}, the $(r-1)$-uniform hypergraph $\{H \setminus \{u\} \mid H \in {\cal H}_1\}$ contains a matching $M = \{T_1, T_2, T_3\}$ of size 3. Every member of ${\cal H}_2$ must intersect each of $T_1, T_2, T_3$.
Thus, by Theorem~\ref{3-matching} 
we have
\begin{equation}\label{7}
|{\cal H}_2| \leq (r - 1)^2 \binom{n-4}{r-3}.
\end{equation}
 Now, we prove  a lower bound for $|{\cal H}_2|$.

\begin{claim}
     $|{\cal H}_2| > \frac{r-2}{2} \binom{n-2}{r-2}$.
\end{claim}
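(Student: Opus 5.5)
The plan is to obtain the lower bound on $|{\cal H}_2|$ by averaging the Ore-degree condition over a large family of non-edges, and then subtracting the contribution of ${\cal H}_1$ using upper bounds on the ${\cal H}_1$-degrees that come from the intersecting property. The key identity is $r|{\cal H}_2| \ge \sum_{v \in W} \deg_{{\cal H}_2}(v)$ for any vertex set $W$. For $v \ne u$ we also have $\deg(v)=\deg_{{\cal H}_1}(v)+\deg_{{\cal H}_2}(v)$.

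First, I choose a set of guaranteed non-edges. Since ${\cal H}$ is non-trivial, ${\cal H}_2\neq\emptyset$; fix $F_0\in{\cal H}_2$. The sets $\{u\}\cup T_j$, $j=1,2,3$, are edges of ${\cal H}_1$. Let $W=[n]\setminus(\{u\}\cup F_0\cup T_1)$, so $|W|=n-2r$. Every $r$-subset $S$ of $W$ is a non-edge, because it misses the edge $\{u\}\cup T_1$. Hence $\deg(S)>r\bigl(\binom{n-2}{r-2}-\binom{n-r-2}{r-2}\bigr)=:rD$. Averaging over $\binom{W}{r}$, exactly as in \eqref{part3}, gives
\[
\sum_{v\in W}\deg(v) > (n-2r)\,D .
\]

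Second, I bound the ${\cal H}_1$-degrees. For $v\in W$, every edge of ${\cal H}_1$ through $u$ and $v$ must meet $F_0$, and $u,v\notin F_0$. Hence $\deg_{{\cal H}_1}(v)\le D$. Moreover, such an edge must meet \emph{every} edge of ${\cal H}_2$ avoiding $v$, so $\deg_{{\cal H}_1}(v)$ is smaller still whenever ${\cal H}_2$ is small. For the global count I would use $\sum_{v\ne u}\deg_{{\cal H}_1}(v)=(r-1)|{\cal H}_1|$ together with $|{\cal H}_1|=\Delta\le\binom{n-1}{r-1}-\binom{n-r-1}{r-1}$. Subtracting then gives
\[
r|{\cal H}_2| \ge \sum_{v\in W}\deg_{{\cal H}_2}(v) > (n-2r)D-\sum_{v\in W}\deg_{{\cal H}_1}(v).
\]

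The main obstacle is that this crude subtraction fails. The bound $\sum_{v\in W}\deg_{{\cal H}_1}(v)\le(r-1)|{\cal H}_1|$ is of order $(n-1)\binom{n-2}{r-2}$, which essentially cancels the gain $(n-2r)D$. So I must exploit the smallness of ${\cal H}_2$, arguing by contradiction. Suppose $|{\cal H}_2|\le\frac{r-2}{2}\binom{n-2}{r-2}$. Then $\sum_v \deg_{{\cal H}_2}(v)\le \frac{r(r-2)}{2}\binom{n-2}{r-2}$, so a typical vertex $v\in W$ has $\deg_{{\cal H}_2}(v)=O(r^2\binom{n-2}{r-2}/n)$, which is tiny for $n\ge 4r^2$. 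For such $v$, the Ore condition forces $\deg_{{\cal H}_1}(v)$ to be close to $D$ on average over $W$.

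On the other hand, $\deg_{{\cal H}_1}(v)$ can be close to $D$ only if the edges of ${\cal H}_1$ through $v$ essentially consist of all $(r-2)$-sets meeting $F_0$. Since ${\cal H}_1$ must also be cross-intersecting with all of ${\cal H}_2$, and ${\cal H}_2$ contains at least two distinct edges (it is non-trivial relative to $u$, with transversal structure forced by $T_1,T_2,T_3$), the deficiency from $D$ is at least about $\binom{n-3}{r-3}$ per vertex. Summed over the $n-2r$ vertices of $W$, this yields a deficiency of order $\frac{n}{r}\binom{n-2}{r-2}\cdot\frac{r-2}{n}$ per vertex. The total deficiency $(n-2r)\binom{n-3}{r-3}\approx(r-2)\binom{n-2}{r-2}$ must then be compensated by ${\cal H}_2$-degrees. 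This gives $r|{\cal H}_2|\gtrsim (r-2)\binom{n-2}{r-2}\cdot\frac{r}{2}$, the desired bound, after using $n\ge 4r^2$ to absorb error terms.

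I would first try to make this second-edge deficiency argument precise, choosing $F_1\in{\cal H}_2$ with $F_1\neq F_0$ and bounding the number of $(r-2)$-sets meeting $F_0$ but missing $F_1\setminus\{v\}$. I expect this to be the hardest and most delicate step.
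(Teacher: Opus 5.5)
Your framework is the paper's: average the Ore condition over $r$-subsets avoiding an edge of ${\cal H}_2$, then subtract ${\cal H}_1$-degrees, which are bounded because edges of ${\cal H}_1$ must meet two edges of ${\cal H}_2$. \textbf{There is a genuine gap: the quantitative step does not deliver the claim.}

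With a per-vertex deficiency of about $\binom{n-3}{r-3}$, the total deficiency over $W$ is about $(r-2)\binom{n-2}{r-2}$. So you can only conclude $\sum_{v\in W}\deg_{{\cal H}_2}(v)\gtrsim(r-2)\binom{n-2}{r-2}$, and via $r|{\cal H}_2|\ge\sum_{v\in W}\deg_{{\cal H}_2}(v)$ only $|{\cal H}_2|\gtrsim\frac{r-2}{r}\binom{n-2}{r-2}$. The extra factor $\frac r2$ in your line $r|{\cal H}_2|\gtrsim(r-2)\binom{n-2}{r-2}\cdot\frac r2$ has no source. The loss is real: the weaker bound does not contradict $|{\cal H}_2|\le(r-1)^2\binom{n-4}{r-3}$ when $n=4r^2$. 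Your heuristic that a typical $\deg_{{\cal H}_2}(v)$ is negligible is also off. Under your contradiction hypothesis, the average ${\cal H}_2$-degree on $W$ can be about $D/2$, comparable to $D\approx r\binom{n-3}{r-3}$. The contradiction framing buys nothing; a direct inequality suffices.

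Two missing ideas together recover the lost factor of about $r$.
\begin{enumerate}
\item Choose $E,E'\in{\cal H}_2$ with $t=|E\cap E'|$ minimal, and work on $X=[n]\setminus(E\cup E'\cup\{u\})$. You must remove $E'$ too, since vertices of $E'$ get no deficiency. For $x\in X$, the deficiency $D-\deg_{{\cal H}_1}(x)$ is at least the number of $(r-2)$-sets in $[n]\setminus\{u,x\}$ meeting $E$ but missing $E'$. That number is $\binom{n-r-2}{r-2}-\binom{n-2r+t-2}{r-2}>(r-t)\binom{n-2r+t-2}{r-3}$, roughly $r-t$ times your estimate.
\item ${\cal H}_2$ is then $t$-intersecting, so every edge of ${\cal H}_2$ has at most $r-t$ vertices in $X$. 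You therefore divide by $r-t$ rather than $r$.
\end{enumerate}
The two factors $r-t$ cancel, giving $|{\cal H}_2|>|X|\binom{n-2r+t-2}{r-3}=(r-2)\binom{n-2r+t-1}{r-2}$, which yields the claim for $n\ge 4r^2$. The paper finishes by comparing with $\binom{n-t-1}{r-2}$ and using Wilson's theorem to force $t=1$.

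Finally, $|{\cal H}_2|\ge 2$ does not follow from $T_1,T_2,T_3$. It holds because otherwise ${\cal H}\subseteq{\cal HM}_{n,r}$. Equivalently, every $v\in W$ would have $\deg(v)\le D$, violating the strict Ore condition on $r$-subsets of $W$.
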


\renewcommand{\qedsymbol}{$\blacksquare$}
\begin{proof}
Clearly, $|{\cal H}_2| \geq 2$, since otherwise ${\cal H}$ is a subhypergraph of the Hilton--Milner construction ${\cal HM}_{n,r}$, and thus $\sigma_r({\cal H}) \leq \sigma_r({\cal HM}_{n,r}) \leq r \left(\binom{n-2}{r-2} - \binom{n-r-2}{r-2}\right)$, a contradiction.  

Let $t = \min |E \cap E'|$ among all distinct $E, E' \in {\cal H}_2$. Obviously $1 \leq t \leq r-1$, and ${\cal H}_2$ is a $t$-intersecting hypergraph. Then since $n \geq 4r^2 \geq (r-t+1)(t+1)+1$, we get $|{\cal H}_2| \leq \binom{n-t-1}{r-t}$ by Theorem~\ref{t-intersecting}. Let $E, E' \in {\cal H}_2$ be such that $|E \cap E'| = t$ and set $S=E \cup E' \cup \{u\}$. 

Observe that $|E|=|E'|=r$ and $|E \cup E'| = 2r-t$.
 Every edge in ${\cal H}_1$ must intersect both $E$ and $E'$. For $x \notin S$, there are $\binom{n-2}{r-2}$ $r$-sets that contain $u$ and $x$. By the principle of inclusion-exclusion, $\binom{n-|E|-2}{r-2} + \binom{n-|E'|-2}{r-2} - \binom{n-|E\cup E'|-2}{r-2}$ of those $r$-sets fail to intersect at least one of $E$ or $E'$. Therefore,
\begin{align}
\label{degreebound1} |{\cal H}_1(x)| \leq \binom{n-2}{r-2} - 2 \binom{n-r-2}{r-2} + \binom{n-2r+t-2}{r-2}.
\end{align}
Let $X = V({\cal H}) \setminus S$ and thus $|X| = n - 1 - (2r - t)\ge r$. As $\mathcal{H}$ is intersecting, there is no edge contained in $X$ and a standard  averaging  over link hypergraphsgraphs $\mathcal{H}(x)$  gives
\begin{equation}\label{0}
\sum_{x\in X}|{\cal  H}(x)|\geq |X|\cdot \left({n-2 \choose r-2}-{n-r-2 \choose r-2}\right).
\end{equation}
Since $\sum_{x\in X}|{\cal H}(x)| = \sum_{x\in X}|{\cal H}_1(x)| + \sum_{x\in X}|{\cal H}_2(x)|$,  applying
\eqref{degreebound1} and \eqref{0} gives
\[
\sum_{x\in X}|{\cal H}_2(x)|\geq|X|\cdot \left(\binom{n-r-2}{r-2} - \binom{n-2r+t-2}{r-2}\right).
\]
For each $H \in {\cal H}_2$, we have  $|H \cap E_1| \geq t$, therefore  every edge $H \in {\cal H}_2$ is counted at most $r - t$ times in $\sum_{x\in X}|{\cal H}_2(x)|$. Therefore,
\[
|{\cal H}_2|\geq \frac{|X|}{r-t} \left( \binom{n-r-2}{r-2}-\binom{n-2r+t-2}{r-2} \right) > |X|\binom{n-2r+t-2}{r-3},
\]
where the last inequality follows by an application of~\eqref{handy-ineq}. 
Substituting $|X|=n-2r+t-1$, we obtain 
\[
|{\cal H}_2| > (r-2)\cdot \binom{n-2r+t-1}{r-2}.
\]
Note that for $n \geq 4r^2$ and $1 \leq t \leq r-1$, we have
\begin{equation*}
    \frac{\binom{n-2r+t-1}{r-2}}{\binom{n-t-1}{r-2}} = \frac{(n-2r+t-1) \cdot\ldots\cdot (n-3r+t+2)}{(n-t-1) \cdot\ldots\cdot (n-t-r+2)} 
 \geq \left( 1 - \frac{2r-2t}{n-t-r+2} \right)^{r-2}  
\geq 1-\frac{2(r-t)(r-2)}{n-t-r+2} 
\geq \frac{1}{2}.
\end{equation*}
Thus,
\[
|{\cal H}_2| > \frac{r - 2}{2} \cdot  \binom{n-t-1}{r-2} \geq \binom{n-t-1}{r-2},
\]
which, together with $|{\cal H}_2| \leq \binom{n-t-1}{r-t}$, implies that $t = 1$, hence $|{\cal H}_2| > \frac{r - 2}{2} \binom{n-2}{r-2}$, as desired.
\end{proof}
\renewcommand{\qedsymbol}{$\square$}

Now, by the Claim, 
$$|{\cal H}_2| > \frac{r-2}{2}\cdot
\binom{n-2}{r-2}
= \frac{r-2}{2}\cdot \frac{n-2}{r-2} \cdot \binom{n-3}{r-3}=\frac{n-2}{2} \cdot\binom{n-3}{r-3}>2(r-1)^2\cdot\binom{n-3}{r-3},
$$
when $n\geq 4r^2\geq 4(r-1)^2+2$, which contradicts \eqref{7}.
\end{proof}

\section{Proof of Theorem~\ref{mainn}}

 We begin with proof for the graph case ($r=2$) of Theorem~\ref{mainn}.

\begin{lemma}\label{graphcase}
If $H$ is an $n$-vertex graph with $\sigma_2(H)>2(s-1)$ and  $n \geq 2s$, then 
$H$ has a matching of size $s$.
\end{lemma}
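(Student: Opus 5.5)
The plan is a maximum-matching argument in the style of the Erd\H{o}s--Gallai / Ore proofs. Suppose for contradiction that the largest matching $M$ in $H$ has size $k\le s-1$. Let $U=V(H)\setminus V(M)$ be the set of unmatched vertices. Then $|U|=n-2k\ge 2s-2(s-1)=2$. By maximality of $M$, the set $U$ is independent. So any two distinct vertices $u,v\in U$ form a non-edge, and the hypothesis gives
\[
\deg(u)+\deg(v)\ge \sigma_2(H)\ge 2s-1\ge 2k+1 .
\]
Again by maximality, every neighbour of a vertex of $U$ lies in $V(M)$. The goal is to derive the opposite bound $\deg(u)+\deg(v)\le 2k$.

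For the upper bound, fix two distinct $u,v\in U$ and a matching edge $ab\in M$. I claim that at most $2$ edges join $\{u,v\}$ to $\{a,b\}$. If there were $3$ or more, then by symmetry (swapping the roles of $u,v$ and of $a,b$) we may assume $u$ is adjacent to both $a$ and $b$, and $v$ is adjacent to $a$. In that case replace $ab$ in $M$ by the two disjoint edges $ub$ and $va$. This produces a matching of size $k+1$, contradicting maximality. (The same swap applies in the simpler configuration $u\sim a$, $v\sim b$, so in fact only very restricted patterns are possible, but the count bound is all we need.) Summing over the $k$ edges of $M$ gives $\deg(u)+\deg(v)\le 2k$, which is the desired contradiction.

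The one step needing care is this local case check: one must verify that every configuration with at least three edges between $\{u,v\}$ and $\{a,b\}$ contains two disjoint edges, one at $u$ and one at $v$, covering $a$ and $b$. Only the configuration described above (up to symmetry) needs checking, and it is immediate. The requirement $n\ge 2s$ is used only to guarantee $|U|\ge 2$, which is what supplies a non-edge to which the Ore-degree condition can be applied.
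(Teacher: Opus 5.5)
Your proof is correct and follows the paper's argument. Two uncovered vertices $u,v$ form a non-edge, and each matching edge sends at most two edges to $\{u,v\}$, which forces $\deg(u)+\deg(v)\le 2(s-1)$. The only difference is that you start from a maximum matching of size $k\le s-1$, whereas the paper first passes to an edge-maximal $H$ to get a matching of size exactly $s-1$; your version is a harmless, slightly cleaner simplification.
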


\begin{proof}
Adding edges between non-adjacent vertices cannot decrease $\sigma_2(H)$, so we may suppose $H$ is edge-maximal subject to containing no matching of size $s$.
If $H$ is a complete graph, then, as $n \geq 2s$, there is a matching of size $s$, a contradiction.
  Let $u,v$ be non-adjacent vertices in $H$. The edge-maximality of $H$ implies that $H+uv$ contains a matching of size $s$. Thus, $H$ has a matching $M$ of size $s-1$. Denote its edges by $E(M)=\{u_1v_1, u_2v_2, \ldots, u_{s-1}v_{s-1}\}$. Let $u,v$ be two vertices not  in $V(M)$. As $H$ has no matching of size $s$, the vertices $u,v$ are not adjacent and each edge incident to $u$ or $v$ is incident to a vertex from $u_1,u_2, \ldots ,u_{s-1},v_1,v_2, \ldots ,v_{s-1}$. Moreover, for each edge $u_iv_i\in M$, the number of edges between $\{u_i,v_i\}$ and $\{u,v\}$ is at most 2. Hence $\deg(u)+\deg(v)\leq 2(s-1) < \sigma_2(H)$, a contradiction.
\end{proof}

\begin{proof}[Proof of Theorem~\ref{mainn}.]
Assume to the contrary that there exists 
an $r$-uniform hypergraph ${\cal  H}$ on $n$ vertices such that $\sigma_r({\cal H})>r\left({n-1 \choose r-1}-{n-s \choose r-1}\right)$, but $\mathcal{H}$ has no matching of size $s$. Without loss of generality we may assume:

\smallskip

\noindent(a)~${\cal H}$ has the minimum number, but at least $3r^2(s-1)$, vertices. 

\smallskip

\noindent (b)~Subject to (a), ${\cal H}$ has the maximum possible number of edges. 

\smallskip

First, note that by Lemma~\ref{graphcase} and Theorem~\ref{erdosko} we may restrict our attention to the case when $r\geq 3$ and $s\geq 3$. 
We first show that the maximum degree of ${\cal  H}$ is at most $r(s-1){n-2 \choose r-2}$. Indeed, suppose $v$ is a vertex with $\deg(v)>r(s-1){n-2 \choose r-2}$. Define the hypergraph $\mathcal{H}'$, which results from the removal of vertex $v$ from $\mathcal{H}$. Because every vertex $u\in V({\cal  H}')$, and $v$ are contained in at most ${n-2 \choose r-2}$ edges of ${\cal  H}$, we have
\[
\sigma_r({\cal H}')>r\left({n-1 \choose r-1}-{n-s \choose r-1}-{n-2 \choose r-2}\right)=r\left({n-2 \choose r-1}-{n-s \choose r-1}\right).
\]
Now, since ${\cal H}$ has the minimum number of vertices and $3r^2\leq n/(s-1)\leq (n-1)/(s-2)$, there is a matching of size $s-1$ in ${\cal H}'$. Each of the $(s-1)r$ vertices in this matching is contained together with $v$ in at most ${n-2 \choose r-2}$ edges. As $\deg(v)>r(s-1){n-2 \choose r-2}$, there exists an edge containing $v$ but disjoint from the $s-1$ edges in the matching; this forms a matching of size $s$ in ${\cal H}$, a contradiction.
Therefore, $\Delta({\cal H})\leq r(s-1){n-2 \choose r-2}$. 

Let $V(\mathcal{H}) = \{v_1,v_2,\ldots, v_n\}$ be such that $\deg(v_1)\geq \deg(v_2)\geq \ldots \geq \deg(v_n)$. By Lemma~\ref{HLS} and the maximum degree condition on ${\cal H}$, we may assume that $\deg(v_1),\ldots, \deg(v_{s-1})\leq r(s-1){n-2 \choose r-2}$, and $\deg(v_s),\ldots, \deg(v_n)\leq 2(s-1){n-2 \choose r-2}$. 
As $n \geq rs$ and by  the edge maximality assumption (b), $\mathcal{H}$ must contain a matching $\mathcal{M}$ of size $s-1$.
By the condition on the vertex degrees of ${\cal H}$, we deduce that the sum of the degrees of the $(s-1)r$ vertices in $\mathcal{M}$ is at most
\[
    (s-1)\left(r(s-1)\binom{n-2}{r-2}\right)+(s-1)(r-1)\left(2(s-1)\binom{n-2}{r-2}\right)
=(3r-2)(s-1)^{2}\binom{n-2}{r-2}.
\]
Now, by
 Lemma~\ref{lem} and inequality \eqref{handy-ineq}, 
\[
|{\cal  H}|\geq \frac{n}{r}\left({n-1 \choose r-1}-{n-s \choose r-1}\right)>\frac{n}{r}(s-1){n-s \choose r-2}.
\]
Using $(1+x)^k \geq 1+kx$, hwe have
\[
{n-s \choose r-2}/{n-2 \choose r-2}>\left(1-\frac{s-2}{n-r}\right)^{r-2}\geq 1-\frac{(s-2)(r-2)}{n-r}>1-\frac{1}{3r},
\]
because $n-r\geq 3r(s-2)(r-2)$.  
Therefore,
\[
|{\cal  H}|>\frac{n}{r}(s-1)\left(1-\frac{1}{3r}\right){n-2 \choose r-2}>(3r-2)(s-1)^2{n-2 \choose r-2},
\]
provided that $n\geq 3r^2(s-1)$. Now we have that the sum of the degrees of the vertices in $\mathcal{M}$ is smaller than the number of the edges of ${\cal  H}$, so there is an edge of ${\cal  H}$ disjoint from $V(\mathcal{M})$, which together with the edges of $\mathcal{M}$ gives a matching of size $s$ in $\mathcal{H}$, a contradiction. 
\end{proof}

Theorem~\ref{mainn} may have some interesting applications. As an example, we give a `colorful' corollary. Let ${\cal H}_1, {\cal H}_2, \ldots , {\cal H}_c$ be hypergraphs. We write ${\cal H}\rightarrow({\cal H}_1, {\cal H}_2, \ldots , {\cal H}_c)$, if for any partition of the edges of ${\cal H}$ into $c$ color classes  ${\cal G}_1, {\cal G}_2, \ldots, {\cal G}_c$,  there is at least one ${\cal G}_i$ which has a subhypergraph isomorphic to ${\cal H}_i$.

\begin{corollary}
Let $n_1\geq n_2\geq \ldots \geq n_c$ be positive integers, put $N=\sum_{i=1}^{c}(n_i-1)$ and let $n\geq 3r^2N$. If ${\cal  H} \subseteq \binom{[n]}{r}$ is a hypergraph with $\sigma_r({\cal H})>r\left({n-1 \choose r-1}-{n-N-1 \choose r-1}\right)$, then 
\[
{\cal  H}\rightarrow({\cal M}_{n_1}, {\cal M}_{n_2}, \ldots, {\cal M}_{n_c}),
\]
where each $\mathcal{M}_{n_i}$ is a matching of size $n_i$. 
\end{corollary}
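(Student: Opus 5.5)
The plan is to reduce the corollary to Theorem~\ref{mainn} applied with $s=N+1$, using a pigeonhole argument on the colour classes. The hypothesis $\sigma_r(\mathcal{H})>r\left(\binom{n-1}{r-1}-\binom{n-N-1}{r-1}\right)$ is exactly the hypothesis of Theorem~\ref{mainn} with $s-1=N$, that is $s=N+1$. The bound $n\geq 3r^2N=3r^2(s-1)$ is exactly the range required there. We also have $s\geq 2$ provided $N\geq 1$.

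\textbf{Degenerate case.} If $N=0$, then every $n_i=1$, and we only need a single edge of $\mathcal{H}$. This is trivial unless $\mathcal{H}$ is empty. If $\mathcal{H}$ is empty, then $\sigma_r(\mathcal{H})=0$, which contradicts the strict inequality $\sigma_r(\mathcal{H})>0$.

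\textbf{Main case $N\geq 1$.} Theorem~\ref{mainn} gives a matching $\mathcal{M}$ in $\mathcal{H}$ with $N+1=\sum_{i=1}^c(n_i-1)+1$ pairwise disjoint edges. Now take any partition of the edges of $\mathcal{H}$ into colour classes $\mathcal{G}_1,\dots,\mathcal{G}_c$; this induces a partition of $\mathcal{M}$.

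\textbf{Pigeonhole step.} Suppose that for every $i$ the class $\mathcal{G}_i$ contains at most $n_i-1$ edges of $\mathcal{M}$. Then $|\mathcal{M}|\leq \sum_i(n_i-1)=N$, a contradiction. So some $\mathcal{G}_i$ contains at least $n_i$ edges of $\mathcal{M}$. These edges are pairwise disjoint, so they form a copy of $\mathcal{M}_{n_i}$ inside $\mathcal{G}_i$, which is what we need.

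I do not expect a real obstacle. The only points to check are that the parameters align exactly with Theorem~\ref{mainn}, namely $s-1=N$ so that $\binom{n-s}{r-1}=\binom{n-N-1}{r-1}$, and the trivial case $N=0$. The ordering $n_1\geq\cdots\geq n_c$ is not needed.
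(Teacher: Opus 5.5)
Your proposal is correct and is essentially the paper's proof: apply Theorem~\ref{mainn} with $s=N+1$ to get a matching of size $N+1$, then use pigeonhole over the colour classes. Your separate treatment of $N=0$ (where Theorem~\ref{mainn}, which needs $s\geq 2$, does not apply) is a small addition that the paper leaves implicit.
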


\begin{proof}
Consider an arbitrary edge coloring of ${\cal  H}$ by colors $\alpha_1, \alpha_2, \ldots, \alpha_c$. By Theorem~\ref{mainn},
${\cal  H}$ contains a matching of size $N+1$. By the pigeonhole principle, there is a monochromatic matching of size $n_i$ whose edges are colored by $\alpha_i$, for some $1\leq i\leq c$. Therefore, ${\cal  H}\rightarrow({\cal M}_{n_1}, {\cal M}_{n_2}, \ldots, {\cal M}_{n_c}).$
\end{proof}

\section{Proof of Theorem~\ref{properlycolored}}

\begin{proof}[Proof of Theorem~\ref{properlycolored}.]
Assume to the contrary that there exists a counterexample, i.e., there are properly edge-colored $r$-uniform hypergraphs ${\cal H}_{1}, \dots, {\cal H}_{s}$ on $n>3r^2s$ vertices such that $\sigma_r({\cal H}_i)> r\left({n-1 \choose r-1}-{n-s \choose r-1}\right)$, but there is no $s$-rainbow matching in ${\cal H}_{1}, \dots, {\cal H}_{s}$. Among counterexamples consider one with minimum $s$. Clearly, $s\geq 2$. First we prove that for each $1\leq i\leq s$, we have  $\Delta({\cal H}_i)\leq r(s-1){n-2 \choose r-2}+s-1$. Indeed, suppose that some ${\cal H}_{i}$ has a vertex $v$ of degree greater than $r(s-1){n-2 \choose r-2}+s-1$. 
For $j \neq i$, define ${\cal H}_j'$ as the subhypergraph of ${\cal H}_{j}$ induced by the vertex set $V({\cal H}_{j}) \setminus \{v\}$.
Since every vertex $u\in {\cal H}^{\prime}_{j}$ and $v$ are contained in at most ${n-2 \choose r-2}$ edges of ${\cal H}_{j}$, we have
\[
\sigma_r({\cal H}^{\prime}_{j})>r\left({n-1 \choose r-1}-{n-s \choose r-1}-{n-2 \choose r-2}\right)=r\left({(n-1)-1 \choose r-1}-{(n-1)-(s-1) \choose r-1}\right).
\]
Now, $3r^2< n/s\leq (n-1)/(s-1)$, and so by the choice of $s$, there exists an $(s-1)$-rainbow matching $\{e_{j}\}_{j \neq i}$ in $\{{\cal H}'_j\}_{j \neq i}$, which spans $r(s-1)$ vertices. Since the degree of $v$ in ${\cal H}_i$ is greater than $r(s-1){n-2 \choose r-2}+s-1$, there exists an edge $e$ in ${\cal H}_{i}$ which is disjoint from the edges of the $(s-1)$-rainbow matching and is of a different color, which produces a $s$-rainbow matching, a contradiction.  Therefore, for $1\leq i\leq s$ we have  $\Delta({\cal H}_i)\leq r(s-1){n-2 \choose r-2}+s-1$. Now, we distinguish two cases based on the $s$-th largest degree in the ${\cal H}_{i}$'s.

\medskip
\noindent
{\bf Case 1:} For some $1\leq i\leq s,$ the $s$-th largest degree in ${\cal H}_{i}$ is at most $2(s-1)\binom{n-2}{r-2}+s-1$.

\medskip
By the choice of $s$, we know that there exists an $(s-1)$-rainbow matching $\mathcal{M}$ in the hypergraphs $\{{\cal H}_j\}_{j\neq i}$ which spans $r(s-1)$ vertices. Since the $s$-th largest degree of ${\cal H}_{i}$ is at most $2(s-1)\binom{n-2}{r-2}+s-1$, the sum of degrees of the $r(s-1)$ vertices of $V(\mathcal{M})$ in ${\cal H}_{i}$ is at most
\[
(s-1)\left[r(s-1)\binom{n-2}{r-2}+s-1\right]+(s-1)(r-1)\left[2(s-1)\binom{n-2}{r-2}+s-1\right]=(s-1)^{2}\left[ (3r-2)\binom{n-2}{r-2}+r \right].
\]
Using Lemma~\ref{lem} and \eqref{handy-ineq}, we have
\[
|{\cal  H}_i|\geq \frac{n}{r}\left({n-1 \choose r-1}-{n-s \choose r-1}\right)>\frac{n}{r}(s-1){n-s \choose r-2}.
\]
Note that
\[
{n-s \choose r-2}/{n-2 \choose r-2}>\left(1-\frac{s-2}{n-r}\right)^{r-2}\geq 1-\frac{(s-2)(r-2)}{n-r}>1-\frac{1}{3r},
\]
because $n-r\geq 3r(s-2)(r-2)$.  
Using the above and the threshold $n > 3r^2s$ we obtain
\[
|{\cal  H}_i|>(3r-1)(s-1)^2{n-2 \choose r-2}
>(3r-2)(s-1)^{2}\binom{n-2}{r-2}+(s-1)^{2}r+(s-1)\frac{n-r(s-1)}{r},
\]
Comparing with the sum of degrees of the $r(s-1)$ vertices of $V(\mathcal{M})$ in ${\cal H}_{i}$, we see that $\mathcal{H}_i$ has more than $(s-1)\frac{n-r(s-1)}{r}$ edges that are disjoint from $\mathcal{M}$. In $\mathcal{H}_i$ there are at most $\frac{n-r(s-1)}{r}$ edges of each color that are disjoint from the vertices of $V(\mathcal{M})$. Therefore, there is some edge disjoint from $\mathcal{M}$ that is of a color not appearing in $\mathcal{M}$. This produces an $s$-rainbow matching in ${\cal H}_{1}, \dots, {\cal H}_{s}$, a contradiction.

\medskip
\noindent
{\bf Case 2:} For all $1\leq i\leq s$, the hypergraph ${\cal  H}_{i}$ has at least $s$ vertices of degree greater than $2(s-1)\binom{n-2}{r-2}+s-1$.

\medskip
For each $1 \leq i \leq s$, 
select distinct vertices $v_{i}\in V({\cal H}_{i})$, such that the degree of $v_{i}$ in ${\cal  H}_{i}$ is greater than $2(s-1)\binom{n-2}{r-2}+s-1$. For each $i$, consider all the subsets of $V({\cal  H}_{i})\setminus\{v_{1},\ldots,v_{s}\}$ which together with $v_{i}$ form an edge of ${\cal  H}_{i}$. Denote the corresponding $(r-1)$-uniform hypergraph by ${\cal  H}'_{i}$. 
Clearly,
\[
|{\cal  H}^{\prime}_{i}|>2(s-1)\binom{n-2}{r-2}+s-1-(s-1)\binom{n-2}{r-2}>\binom{n-s}{r-1}-\binom{n-2s+1}{r-1}.
\]
For every $i$, let the edge $e \in \mathcal{H}_i'$ inherit the color of the edge $e \cup \{v_i\} \in \mathcal{H}_i$.
Now by applying Theorem~\ref{HLW} to the properly edge-colored $(r-1)$-uniform hypergraphs $\mathcal{H}_1'\dots, \mathcal{H}_s'$ on $n-s$ vertices we obtain an $s$-rainbow matching $e_{1} \in \mathcal{H}'_1, e_2 \in \mathcal{H}'_2, \dots, e_s \in \mathcal{H}'_s$. Taking the edges $e_{i}\cup\{v_{i}\}$ for all $i$, we obtain an $s$-rainbow matching in ${\cal H}_{1},\dots, \mathcal{H}_s$, a contradiction.
\end{proof}

\section{Proof of Theorem~\ref{main2}}

\begin{proof}
    Suppose
$\mathcal{A},\mathcal{B} \subseteq \binom{[n]}{r}$ are cross-intersecting hypergraphs. If the edges of both hypergraphs  share a common vertex, then both are trivial (thus intersecting) and so by Theorem~\ref{erdosko} we have
\[
\sigma_r(\mathcal{A})\cdot \sigma_r(\mathcal{B})\le  r^2 \binom{n-2}{r-2}^2.
\]
If there is no such common vertex, then we may apply Lemma~\ref{lem}, Theorem~\ref{non-triv-cross}, and inequality \eqref{handy-ineq} to obtain
\begin{align*}
    \sigma_r(\mathcal{A})\cdot \sigma_r(\mathcal{B}) \leq \frac{r^4}{n^2} |\mathcal{A}||\mathcal{B}| & \leq 
 \frac{r^4}{n^2} \cdot \left(\binom{n-1}{r-1}+1\right)\cdot \left(\binom{n-1}{r-1}-\binom{n-r-1}{r-1}\right) 
  < \frac{r^4}{n^2}\cdot 2r\cdot \binom{n-1}{r-1}\cdot \binom{n-2}{r-2} \\
  &= \frac{2r^5(n-1)}{(r-1)n^2}\binom{n-2}{r-2}^2
   \leq r^2 \binom{n-2}{r-2}^2,
\end{align*}
whenever $n \geq 4r^2$.
\end{proof}

\section{Concluding remarks}

In this paper we proved Ore-degree analogues of the Erd\H{o}s--Ko--Rado theorem, Hilton--Milner theorem, and the Erd\H{o}s Matching Conjecture when the number of the vertices of the host hypergraph is large enough.
It would be interesting to find the exact conditions on $n$ for which these statements hold.
We would like to emphasize the problem of finding a matching of fixed size.
Recall that the $r$-uniform hypergraph ${\cal H}$ on $n$ vertices containing all the edges intersecting a fixed set of $s-1$ vertices has matching number $s-1$ and Ore-degree $\sigma_r({\cal H})=r\left({n-1 \choose r-1}-{n-s \choose r-1}\right)$. Theorem~\ref{mainn} shows that for $n\geq 3r^2(s-1)$, this bound on $\sigma_r({\cal H})$ is best possible. We expect that the same should hold when $n>Crs$ for some absolute constant $C$.

Note that Theorem~\ref{mainn} cannot be extended to hold for $n\leq rs+1$ in general. Indeed, consider the hypergraph on $n=rs+1$ vertices formed by a clique on $rs-1$ vertices together with two isolated vertices.
This hypergraph contains no matching of size $s$ and has Ore-degree  $(r-2)\binom{n-3}{r-2}$ which exceeds the bound from Theorem~\ref{mainn} for many values of $r$ and $s$ (e.g.\ $r=8$ and $s=7$).
Therefore, we make the following conjecture.

\begin{conjecture}\label{con1}
For $n>2rs$, every $r$-uniform hypergraph ${\cal H}$  on $n$ vertices with  $\sigma_r({\cal H})>r\left({n-1 \choose r-1}-{n-s \choose r-1}\right)$ contains a matching of size $s$.
\end{conjecture}
\noindent 
For $r=2$, a generalization of this conjecture was proved in \cite{ghaffar}, but  it remains open for  $r\geq 3$.\\

 \noindent {\bf Acknowledgments.} We are grateful to Nika Salia and Casey Tompkins for the example showing that Conjecture~\ref{con1} cannot hold for $n\leq rs+1$.


\small
\begin{thebibliography}{10}
\bibitem{Barat}
J. Bar\'{a}t, G. N. S\'{a}rk\"{o}zy, Partitioning 2-edge-colored Ore-type graphs by monochromatic cycles, {\it J. Graph Theory} {\bf 81} (4) (2016), 317--328.

\bibitem{BDE}
B. Bollob\'{a}s, D. E. Daykin, P. Erd\H{o}s, Sets of independent edges of a hypergraph, {\it Q. J. Math. Oxf. Ser.} {\bf27} (105) (1976), 25--32.

\bibitem{erdos}
P. Erd\H{o}s, A problem on independent $r$-tuples, {\it Ann. Univ. Sci. Budapest.} {\bf 8} (1965), 93--95.

\bibitem{erdos-gallai}
P. Erd\H{o}s, T. Gallai, On the minimal number of vertices representing the edges of a graph, {\it Publ. Math. Inst. Hung. Acad. Sci.} {\bf 6} (1961), 181--203.

\bibitem{EKR}
P. Erd\H{o}s, C. Ko, R. Rado, Intersection Theorems for Systems of Finite Sets, {\it Quart. J.
Math.} {\bf 12} (1) (1961), 313--320.


\bibitem{fad}
R. J. Faudree, R. J. Gould, A. V. Kostochka, L. Lesniak, I. Schiermeyer, A. Saito, Degree conditions for $k$-ordered Hamiltonian graphs, {\it J. Graph Theory} {\bf 42} (2003), 199--210.

\bibitem{Fe}
M. Ferrara, R. Gould, M. Jacobson, F. Pfender, J. Powell, T. Whalen, New Ore-type conditions for $H$-linked graphs, {\it J. Graph Theory} {\bf 71} (2012), 69--77.

\bibitem{maximum degree}
P. Frankl, Erd\H{o}s–Ko–Rado theorem with conditions on the maximal degree, {\it J. Combin. Theory A} {\bf 46} (1987), 252--263.

\bibitem{F2}
P. Frankl, Improved bounds for Erd\H{o}s’ matching conjecture, {\it J. Combin. Theory Ser. A} {\bf 120} (2013), 1068--1072.

\bibitem{F}
P. Frankl, On the maximum number of edges in a hypergraph with a given matching number, {\it Discrete Appl. Math.} {\bf 216} (2017), 562--581.

\bibitem{FG}
P. Frankl, R. L. Graham, Old and new proofs of the Erd\H{o}s–Ko–Rado theorem, {\it Sichuan Daxue Xuebao} {\bf 26} (1989), 112--122.

\bibitem{FHHY}
P. Frankl, J. Han, H. Hao, Z. Yi, A degree version of the Hilton–Milner theorem, {\it J. Combin. Theory Ser. A} {\bf 155} (2018), 493--502.


\bibitem{frku} P. Frankl, A. Kupavskii,
A size-sensitive inequality for cross-intersecting families.
{\it European J. Combin.} {\bf 62} (2017), 263--271.

\bibitem{FRR}
P. Frankl, V. R\"{o}dl, A. Ruci\'{n}ski, On the maximum number of edges in a triple system not containing a disjoint family of a given size, {\it Combin. Probab. Comput.} {\bf 21} (2012), 141--148.

\bibitem{FT}
P. Frankl, N. Tokushige, A note on Huang–Zhao theorem on intersecting families with large minimum degree, {\it Discrete Math.} {\bf 340} (5) (2017), 1098--1103.

\bibitem{franklwang}
P. Frankl, J. Wang, On the Largest Degrees in Intersecting Hypergraphs,
arXiv:2511.15508.

\bibitem{hako}
J. Han, Y. Kohayakawa, 
The maximum size of a non-trivial intersecting uniform family that is not a subfamily of the Hilton-Milner family,
{\it Proc. Amer. Math. Soc.} {\bf 145} (1)  (2017), 73--87.


\bibitem{HM}
A. Hilton, E. Milner, Some intersection theorems for systems of finite sets, {\it Q. J. Math. Oxf. Ser.}  {\bf 18} (2) (1967), 369--384.

\bibitem{HLS}
H. Huang, P. Loh, B. Sudakov, The size of a hypergraph and its matching number, {\it Combin. Probab. Comput.} {\bf 21} (2012), 442--450.

\bibitem{huang}
H. Huang, Y. Zhao, Degree versions of the Erd\H{o}s--Ko--Rado theorem and Erd\H{o}s hypergraph matching,  {\it J. Combin. Theory Ser. A} {\bf 150} (2017), 233--247.





\bibitem{HLW}
H. Huang, T. Li, G. Wang, Rainbow matchings in properly-colored hypergraphs, {\it Electron. J. Combin.} {\bf 26} (1) (2019),\#P1.4.

\bibitem{huangrao}
H. Huang, R. Rao, On the $\ell$-th largest degree of an intersecting family, arXiv:2602.01692.

\bibitem{ilku}
F. Ihringer, A. Kupavskii, 
Regular intersecting families,
{\it Discrete Appl. Math.} {\bf 270} (2019), 142--152.

\bibitem{equ}
H. A. Kierstead, A.V. Kostochka, An Ore-type theorem on equitable coloring, {\it J. Combin. Theory Ser. B} {\bf 98} (1) (2008), 226--234.

\bibitem{brooks}
H. A. Kierstead, A. V. Kostochka, Ore-type versions of Brooks theorem, {\it J. Combin. Theory Ser. B} {\bf 99} (2) (2009), 298--305.




\bibitem{Kupavskii} 
A. Kupavskii, Degree versions of theorems on intersecting families
via stability, {\it J. Combin. Theory Ser. A} {\bf 168} (2019), 272--287.

\bibitem{LM}
T. {\L}uczak, K. Mieczkowska, On Erd\H{o}s extremal problem on matchings in hypergraphs, {\it J. Combin. Theory Ser. A} {\bf 124} (2014), 178--194.


\bibitem{ghaffar}
G. R. Omidi, G. Raeisi, Ramsey-Tur\'an type results for matchings in edge colored graphs, {\it Discrete Math.} {\bf 347} (3) (2024), 113785.




\bibitem{ore}
O. Ore, Note on Hamilton circuits, {\it Am. Math. Mon.} {\bf 67} (1960), 55.

\bibitem{Pyber}
L. Pyber, A new generalization of the Erd\H{o}s-Ko-Rado theorem, {\it J. Combin. Theory Ser. A} {\bf 43} (1986), 85--90.

\bibitem{Wilson}
R. M. Wilson, The exact bound in the Erd\H{o}s-Ko-Rado theorem, {\it Combinatorica} {\bf 4} (1984), 247--257.

\bibitem{wuxi} B. Wu, R. Xiong, H.~Rong,
A note on the maximum product-size of non-trivial cross $t$-intersecting families,
{\it Discrete Math.} {\bf 347} (2) (2024), 113783.

\end{thebibliography}
\end{document}